\documentclass[11pt]{article}

\usepackage{amsmath,amssymb,amsthm,amsfonts,mathtools}
\usepackage{mathrsfs}
\usepackage{bm}
\usepackage{bbm}
\usepackage{eucal}
\usepackage{graphicx}
\usepackage{float}
\usepackage{color}
\usepackage{subfigure}
\usepackage{indentfirst}

\usepackage{cite}
\usepackage{hyperref}
\hypersetup{
    colorlinks=true,
    linkcolor=blue,
    citecolor=blue,
    urlcolor=blue
}

\allowdisplaybreaks

\theoremstyle{plain}
\newtheorem{theorem}{Theorem}[section]
\newtheorem{lemma}[theorem]{Lemma}
\newtheorem{proposition}[theorem]{Proposition}

\newtheorem{definition}[theorem]{Definition}
\newtheorem{remark}[theorem]{Remark}

\newtheorem{Assumption}{H\!\!} 

\numberwithin{equation}{section}

\newcommand{\be}{\begin{eqnarray}}
\newcommand{\ee}{\end{eqnarray}}
\newcommand{\by}{\begin{eqnarray*}}
\newcommand{\ey}{\end{eqnarray*}}
\newcommand{\bn}{\begin{enumerate}}
\newcommand{\en}{\end{enumerate}}

\begin{document}
\date{}
\title{\bf Smoluchowski--Kramers approximation with L\'evy noise in the Meyer--Zheng topology  \footnote{This work was supported by
the National Natural Science Foundation of China  NSFC No.12371243.
}}
\author{Xueru Liu\footnote{327625941@qq.com}
\hskip1cm Qianqian Jiang \footnote{jqq172515@gmail.com}
 \hskip1cm Wei Wang\footnote{Corresponding author, wangweinju@nju.edu.cn} \\
\texttt{{\scriptsize School of Mathematical Sciences, Dalian University of Technology, 116024 Dalian, P. R. China}}\\
\texttt{\scriptsize  Department of Mathematics, Ruhr University Bochum, D-44801 Bochum, Germany}\\
\texttt{{\scriptsize School of Mathematics, Nanjing University,
Nanjing, 210023, P. R. China}}\\
}\maketitle

\begin{abstract}
We study the Smoluchowski--Kramers approximation for a stochastic wave
equation with state-dependent damping on a bounded domain, driven by both
a \(Q\)-Wiener process and a L\'evy process with finite second moment. 
As \(\varepsilon\to0\), we prove that \(u^\varepsilon\) converges in
distribution, in the Meyer--Zheng topology, to the unique weak solution
of an overdamped stochastic parabolic equation.
The proof relies on a nonlinear transformation associated with the damping
coefficient, uniform energy estimates, and compactness arguments in the
pseudo-path topology. 
We identify the limiting equation, which contains both
the classical Gaussian noise-induced drift caused by state-dependent damping
and an explicit jump correction generated by the L\'evy noise. We show that
the latter coincides exactly with the Marcus-to-It\^o correction associated
with the canonical jump flow induced by the nonlinear damping transformation.
 
\end{abstract}
\maketitle
\noindent\textbf{Key Words:}  {Smoluchowski--Kramers approximation, state-dependent damping, L\'{e}vy noise, noise-induced drift, Marcus integral, stochastic wave equations}\\
\maketitle
\noindent\textbf{2020 Mathematics Subject Classification.}
Primary 60H15; Secondary 35R60, 60G51.


\section{Introduction}

In this paper, we investigate the small-mass limit $\epsilon \to 0$ of a L\'evy noise–driven stochastic wave equation with state-dependent damping, given by
\begin{equation}\label{e:SLE-11}
\left\{
\begin{aligned}
du^\epsilon(t) &=v^\epsilon(t)\,dt,\\
\epsilon\,dv^\epsilon(t) &=\Delta u^\epsilon(t)\,dt-\gamma(u^\epsilon(t))v^\epsilon(t)\,dt+F(u^\epsilon(t))\,dt+\sigma(u^\epsilon(t))\,dW^Q(t)\\
&\quad +\int_{Z_1}G(u^\epsilon(t-),z)\,\tilde{\mathcal N}(dt,dz) +\int_{\mathbb R^m\setminus Z_1} G(u^\epsilon(t-),z)\,\mathcal N(dt,dz),\\
u^\epsilon(t)|_{\partial\mathcal O}&=0,\qquad u^\epsilon(0)=u_0,\qquad v^\epsilon(0)=v_0 ,
\end{aligned}
\right.
\end{equation}
where $\mathcal{O} \subset \mathbb{R}^d$ is a bounded domain and
$\Delta$ denotes the Laplacian with homogeneous Dirichlet boundary conditions. The dynamics take place in the Hilbert space $H^1_0(\mathcal{O}) \times L^2(\mathcal{O})$.

Equation~\eqref{e:SLE-11} may be interpreted as a damped Hamiltonian system subject to L\'evy-type random forcing. The Smoluchowski--Kramers approximation, corresponding to the singular perturbation limit $\epsilon \to 0$, replaces the second-order dynamics with a first-order effective model. While this classical approximation is well-understood in finite dimensions with constant damping, the presence of state-dependent friction and jump noise introduces subtle corrections in the limiting behavior.

The Smoluchowski--Kramers approximation has a long history, dating back to the seminal works of Smoluchowski~\cite{Smoluchowski} and Kramers~\cite{Kramers}.
In finite dimensions, small-mass limits for stochastic differential equations have been widely studied under various conditions, see, for example,
\cite{MF, KS} for constant friction.  When the damping coefficient depends on the state, it is known to generate
nontrivial noise-induced drift terms in the limit, see, for instance~\cite{MWHu,Herzog,shamgv:smklim}.

Recently, the Smoluchowski--Kramers approximation has been extended to infinite-dimensional systems, particularly stochastic partial differential
equations (SPDEs),see~\cite{SMM, SCMF, SM, rmt, L1, WWSCG}. Most of these works assume a constant damping coefficient, which leads to
limiting equations without additional correction terms. However, as already observed in finite-dimensional settings
\cite{Herzog, shamgv:smklim}, state-dependent damping interacts nontrivially with stochastic forcing and can induce effective drift terms in the limiting
dynamics.

In this context, Cerrai and Xi~\cite{SCGX} studied the small-mass limit for stochastic wave equations with state-dependent damping and multiplicative
Gaussian noise. They proved convergence to a quasilinear stochastic heat equation containing a noise-induced drift correction.
Related questions concerning stationary solutions under state-dependent friction were  investigated in~\cite{MXie}.
Subsequently, Cerrai and Debussche~\cite{SA} extended the analysis to systems of wave equations with vector-valued damping and noise coefficients.
The convergence of stationary solutions, long-time statistics, and related
averaging problems has also been studied in
\cite{Glatt-Holtz,Nguyen,WWjsp}.

All the aforementioned infinite-dimensional results focus on Gaussian noise. However, stochastic forcing arising in applications often includes non-Gaussian
components, such as jump discontinuities or heavy-tailed events, see~\cite{DD, DVYAV, DJQ, WAW} for applications.
This motivates the study of the Smoluchowski--Kramers approximation in the
presence of  L\'evy noise.

For systems with constant damping, small-mass limits under  L\'evy noise were
established in~\cite{zhangs}, while related results for interacting particle
systems appear in~\cite{wangzibo2}. However, to the best of our knowledge, the combined effect of  L\'evy noise and
state-dependent damping in infinite-dimensional systems has not been addressed.

\noindent\textbf{Main result.}
The main result of this paper is the rigorous derivation of the small-mass limit of the second-order stochastic system~\eqref{e:SLE-11} as
\(\epsilon\to0\). A key difficulty comes from the presence of L\'evy jumps, the limiting process may have jumps, while \(u^\epsilon\) has continuous
trajectories. Therefore, the usual Skorokhod \(J_1\) topology is too strong for the compactness argument.

To overcome this difficulty, we work in the Meyer--Zheng topology, also known as the pseudo-path topology. After introducing the nonlinear
transformation
\[
\eta^\epsilon:=g(u^\epsilon), \qquad g(r)=\int_0^r \gamma(a)\,da,
\]
we prove the tightness of \(\left\{\mathcal L(\eta^\epsilon)\right\}_{\epsilon>0}\) in \(D([0,T];H^{-1})\) endowed with the Meyer--Zheng topology. This topology
is weak enough to capture the convergence of the time-averaged paths and is therefore well suited to the jump-driven small-mass limit. Combined with
uniform energy estimates and the identification of all limit points, this yields the convergence
\[
\eta^\epsilon \Longrightarrow\eta\qquad\text{in }D([0,T];H^{-1})_{\rm MZ}.
\]
Consequently,
\[
u^\epsilon=g^{-1}(\eta^\epsilon)\Longrightarrow u:=g^{-1}(\eta) \qquad \text{in }D([0,T];H^{-1})_{\rm MZ}.
\]
The limiting dynamics is a stochastic parabolic equation with two additional
correction terms. More precisely, the limiting equation is given by
\begin{eqnarray}\label{LAST}
u(t) &=& u_0
+ \int_0^t \frac{1}{\gamma(u(s))}\,\Delta u(s)\,ds
+ \int_0^t \frac{1}{\gamma(u(s))}\,F(u(s))\,ds
+ \int_0^t \frac{1}{\gamma(u(s))}\,\sigma(u(s))\,dW^Q(s) \nonumber\\
&&+ \int_0^t\int_{\mathbb R^m}
\frac{1}{\gamma(u(s-))}\,G(u(s-),z)\,\tilde{\mathcal N}(ds,dz)
- \int_0^t
\frac{\gamma'(u(s))}{2\,\gamma(u(s))^3}
\sigma(u(s))\sigma^\ast(u(s))\,ds
\nonumber\\
&&+ \sum_{0<s\le t}
\left[
u(s)-u(s-)
-\frac{1}{\gamma(u(s-))}
\int_{u(s-)}^{u(s)} \gamma(r)\,dr
\right].
\end{eqnarray}

\medskip
\noindent\textbf{Marcus integral structure.}
A notable feature of~\eqref{LAST} is its consistency with the Marcus formulation of jump-driven dynamics. The additional drift term
\[
\sum_{0\le s\le t}
\left[
u(s)-u(s-)
-\frac{1}{\gamma(u(s-))}
\int_{u(s-)}^{u(s)} \gamma(r)\,dr
\right]
\]
coincides precisely with the It\^o correction arising from the Marcus-to-It\^o conversion under the transformation $u = g^{-1}(\eta)$, where $g(u) = \int_0^u \gamma(r)\,dr$.  
 This confirms that the small-mass limit preserves the pathwise structure of the original jump dynamics and admits a natural interpretation in terms of Marcus integrals,  see Section~\ref{subsec:marcus-drift}  for a detailed derivation.

\medskip
\noindent\textbf{Organization of the paper.}
Section~\ref{sec:pre} introduces the standing assumptions, the functional analytic framework, and the precise notion of weak solution. It also establishes the existence and uniqueness of mild solutions to equation~\eqref{e:SLE-11}.  
Section~\ref{sec:est} derives uniform estimates and proves compactness for the transformed system.   
Section~\ref{prf} contains the convergence analysis, based on a nonlinear change of variables that enables the identification of the limiting dynamics.
Finally, subsection~\ref{subsec:marcus-drift} discusses the pathwise structure of the limiting equation and its consistency with the Marcus formulation of jump-driven stochastic integrals.


\section{Preliminaries}\label{sec:pre}

In this section, we introduce the functional analytic setting and the
standing assumptions used throughout the paper. We follow the framework of
the stochastic wave equation with homogeneous Dirichlet boundary condition,
and formulate the equation as an abstract evolution equation on the
corresponding phase space. We also recall the Meyer--Zheng topology and a
tightness criterion which will be used in the proof of the
Smoluchowski--Kramers approximation.

\subsection{Functional setting}

Let \(\mathcal O\subset \mathbb R^d\), \(d\ge1\), be a bounded domain with
boundary \(\partial\mathcal O\) of class \(C^3\). Throughout the paper, we
consider homogeneous Dirichlet boundary conditions on \(\partial\mathcal O\).

We denote by \( H:=L^2(\mathcal O)\) the Hilbert space endowed with the inner product
\[
\langle u,v\rangle_H := \int_{\mathcal O}u(x)v(x)\,dx, \qquad \|u\|_H:=\langle u,u\rangle_H^{1/2}.
\]
Following the usual notation for the Dirichlet Laplacian, we write \(H^1\)
for the completion of \(C_0^\infty(\mathcal O)\) with respect to the norm
\[
\|u\|_{H^1}^2:=\|\nabla u\|_H^2=\int_{\mathcal O}|\nabla u(x)|^2\,dx .
\]
Thus \(H^1=H_0^1(\mathcal O)\). We denote by \(H^{-1}\) the dual space of \(H^1\). Then \(H^1\subset H\subset H^{-1},\)
where the embeddings are continuous and compact. For \(f\in H^{-1}\) and \(\phi\in H^1\), we denote by
\(\langle f,\phi\rangle_{H^{-1},H^1}\) the duality between \(H^{-1}\) and \(H^1\). 

We shall use the phase spaces \(\mathcal H:=H\times H^{-1},~\mathcal H_1:=H^1\times H.\)
Let \(\{e_k\}_{k\ge1}\subset H^1\) be the complete orthonormal basis of
\(H\) consisting of eigenfunctions of the Dirichlet Laplacian. We denote by
\(\{-\alpha_k\}_{k\ge1}\) the corresponding eigenvalues,   \(\Delta e_k=-\alpha_k e_k,~ k\ge1,\)
where \(0<\alpha_1\le \alpha_2\le\cdots,~\alpha_k\to\infty .\)

For \(u\in H^{-1}\), its Fourier coefficients relative to the basis
\(\{e_k\}_{k\ge1}\) are defined by \(u_k:=\langle u,e_k\rangle_{H^{-1},H^1},~ k\ge1.\)
If \(u\in H\), then this definition coincides with the usual Fourier coefficient \(u_k=\langle u,e_k\rangle_H.\)
For \(u=\sum_{k=1}^\infty u_k e_k,\)  we have \(\|u\|_{H^1}^2=\sum_{k=1}^\infty \alpha_k |u_k|^2,~ u\in H^1,\)
\(\|u\|_H^2=\sum_{k=1}^\infty |u_k|^2,~ u\in H,\) and \(\|u\|_{H^{-1}}^2=\sum_{k=1}^\infty \alpha_k^{-1}|u_k|^2, ~ u\in H^{-1}.\)
In particular, the Poincar\'e inequalities \(\|u\|_H \le\frac1{\sqrt{\alpha_1}}\|u\|_{H^1},~ u\in H^1,\)
and \(\|u\|_{H^{-1}}\le\frac1{\sqrt{\alpha_1}}\|u\|_H,~ u\in H,\) hold.

For \(N\in\mathbb N\), define the finite-dimensional projection \(P_Nu:=\sum_{k=1}^N u_k e_k .\)
Then \(P_N\) is well defined on \(H^{-1}\), \(H\), and \(H^1\). Moreover,
for \(u\in H^{-1}\), \(\|(I-P_N)u\|_{H^{-1}}^2=\sum_{k=N+1}^{\infty}\alpha_k^{-1}|u_k|^2.\)

\subsection{Noises and coefficients}

Let \((\Omega,\mathcal F,\{\mathcal F_t\}_{t\ge0},\mathbb P)\) be a filtered probability space satisfying the usual conditions. Let
\(\{W^Q(t)\}_{t\ge0}\) be a \(Q\)-Wiener process on \(H\), adapted to \(\{\mathcal F_t\}_{t\ge0}\). The reproducing kernel Hilbert space
associated with \(Q\) is denoted by \(H^Q\). We write \(L_2(H^Q,H)\) for the space of Hilbert--Schmidt operators from \(H^Q\) to \(H\).

Let \(\mathcal N(dt,dz)\) be a Poisson random measure on
\(\mathbb R^m\) with intensity measure \(\nu(dz)dt\). Its compensated
Poisson random measure is denoted by \(\tilde{\mathcal N}(dt,dz):=\mathcal N(dt,dz)-\nu(dz)dt .\)
We assume that \(\mathcal N\) is adapted to the filtration \(\{\mathcal F_t\}_{t\ge0}\) and is independent of the Wiener noise.
Set \(Z_1:=\{z\in\mathbb R^m:\ |z|\le1\}.\)

We impose the following assumptions.

\begin{Assumption}\label{H1}
The measure \(\nu\) is a L\'evy measure on \(\mathbb R^m\), 
\[
\nu(\{0\})=0, \qquad \int_{\mathbb R^m}(1\wedge |z|^2)\,\nu(dz)<\infty .
\]
Moreover, \(\nu\) has finite second moment, that is \(\int_{\mathbb R^m}|z|^2\,\nu(dz)<\infty .\)
\end{Assumption}

\begin{Assumption}\label{H2}
The mappings \(\sigma:H\to L_2(H^Q,H),~F:H\to H,~G:H\times\mathbb R^m\to H\)
are measurable. Moreover, there exist constants \(L_\sigma,L_F,L_G>0\) such that, for all
\(u,u_1,u_2\in H\) and \(z\in\mathbb R^m\),
\begin{align*}
\|\sigma(u_1)-\sigma(u_2)\|_{L_2(H^Q,H)}&\le L_\sigma\|u_1-u_2\|_H,&\|\sigma(u)\|_{L_2(H^Q,H)}&\le L_\sigma(1+\|u\|_H),\\
\|F(u_1)-F(u_2)\|_H&\le L_F\|u_1-u_2\|_H,&\|F(u)\|_H&\le L_F(1+\|u\|_H),\\
\|G(u_1,z)-G(u_2,z)\|_H^2&\le L_G\|u_1-u_2\|_H^2|z|^2,&\|G(u,z)\|_H^2&\le L_G(1+\|u\|_H)^2|z|^2 .
\end{align*}
\end{Assumption}

\begin{Assumption}\label{H3}
The damping coefficient \(\gamma:\mathbb R\to\mathbb R\) belongs to
\(C_b^1(\mathbb R)\) and satisfies
\[
0<\gamma_0\le \gamma(r)\le \gamma_1,\qquad|\gamma'(r)|\le \gamma_1,\qquad r\in\mathbb R,
\]
for some constants \(\gamma_0,\gamma_1>0\). For \(u\in H\),   \(\gamma(u)(x):=\gamma(u(x)).\)
\end{Assumption}

\begin{remark}\label{rem:G-estimates}
By Assumptions~\ref{H1}--\ref{H2},  letting \(C_G:=L_G\int_{\mathbb R^m}|z|^2\,\nu(dz)<\infty,\)
then, for all \(u,u_1,u_2\in H\),
\[
\int_{\mathbb R^m}\|G(u_1,z)-G(u_2,z)\|_H^2\,\nu(dz) \le C_G\|u_1-u_2\|_H^2,
\]
\[
\int_{\mathbb R^m}\|G(u,z)\|_H^2\,\nu(dz) \le C_G(1+\|u\|_H)^2.
\]
Moreover, \(\nu(\mathbb R^m\setminus Z_1)\le\int_{\mathbb R^m\setminus Z_1}|z|^2\,\nu(dz)<\infty.\)
 By the Cauchy--Schwarz inequality,
\[
\int_{\mathbb R^m\setminus Z_1}\|G(u,z)\|_H\,\nu(dz) \le C(1+\|u\|_H).
\]
\end{remark}
Let \(Z^\epsilon(t):=(u^\epsilon(t),v^\epsilon(t)).\) Define the linear operator \(\tilde A^\epsilon\) on \(\mathcal H\) by
\[
\tilde A^\epsilon(u,v):=\left(v,\frac1\epsilon\Delta u\right),\qquad D(\tilde A^\epsilon)=\mathcal H_1.
\]
For \(Z^\epsilon=(u^\epsilon,v^\epsilon)\), define
\[
\tilde B(Z^\epsilon):=\left(0,\frac1\epsilon\bigl(F(u^\epsilon)-\gamma(u^\epsilon)v^\epsilon\bigr)\right),
\]
\[
\tilde \sigma(Z^\epsilon)
:=\left(0,\frac1\epsilon\sigma(u^\epsilon)\right),\qquad\tilde G(Z^\epsilon,z):=\left(0,\frac1\epsilon G(u^\epsilon,z)\right).
\]
Then equation~\eqref{e:SLE-11} can be written as the abstract stochastic
evolution equation
\begin{align}\label{e:X-eps-12}
dZ^\epsilon(t)
&=\tilde A^\epsilon Z^\epsilon(t)\,dt+\tilde B(Z^\epsilon(t))\,dt+\tilde\sigma(Z^\epsilon(t))\,dW^Q(t)+\int_{Z_1}\tilde G(Z^\epsilon(t-),z)\,\tilde{\mathcal N}(dt,dz)\nonumber\\
&\quad+\int_{\mathbb R^m\setminus Z_1}\tilde G(Z^\epsilon(t-),z)\,\mathcal N(dt,dz).
\end{align}
We denote by \(\{S^\epsilon(t)\}_{t\ge0}\) the \(C_0\)-semigroup generated by \(\tilde A^\epsilon\) on \(\mathcal H\).

\begin{definition}[Mild solution]\label{def:P2}
A process \(Z^\epsilon=(u^\epsilon,v^\epsilon) \in L^2\bigl(\Omega;D([0,T];\mathcal H_1)\bigr) \)
is called a mild solution to equation~\eqref{e:X-eps-12} if, for every \(t\in[0,T]\), it holds \(\mathbb P\)-almost surely that
\begin{align}\label{mild-1}
Z^\epsilon(t)
&=S^\epsilon(t)Z(0)+\int_0^t S^\epsilon(t-s)\tilde B(Z^\epsilon(s))\,ds+ \int_0^t S^\epsilon(t-s)\tilde\sigma(Z^\epsilon(s))\,dW^Q(s) \nonumber\\
&\quad+ \int_0^t\int_{Z_1} S^\epsilon(t-s)\tilde G(Z^\epsilon(s-),z)\, \tilde{\mathcal N}(ds,dz) \nonumber\\
&\quad+\int_0^t\int_{\mathbb R^m\setminus Z_1} S^\epsilon(t-s)\tilde G(Z^\epsilon(s-),z)\, \mathcal N(ds,dz),
\end{align}
where \(Z(0)=(u_0,v_0)\in\mathcal H_1.\)
\end{definition}

\begin{definition}[Weak solution to the limiting equation]\label{def:P3}
An adapted process \( u\in L^2\bigl(\Omega;D([0,T];H)\bigr) \cap L^2\bigl(\Omega;L^2(0,T;H^1)\bigr)\)
is called a weak solution to equation~\eqref{LAST} if, for every \(\psi\in C_0^\infty(\mathcal O)\) and every \(t\in[0,T]\), it holds
\(\mathbb P\)-almost surely that
\begin{align}\label{def-g-1}
\langle u(t),\psi\rangle_H &=\langle u_0,\psi\rangle_H-\int_0^t\int_{\mathcal O}\nabla u(s,\xi)\cdot\nabla\left(\frac{\psi(\xi)}{\gamma(u(s,\xi))}\right)d\xi ds\nonumber\\
&\quad+\int_0^t\left\langle\frac{F(u(s))}{\gamma(u(s))},\psi\right\rangle_Hds+\int_0^t\left\langle\frac{\sigma(u(s))}{\gamma(u(s))}\,dW^Q(s),\psi\right\rangle_H\nonumber\\
&\quad-\int_0^t\left\langle\frac{\gamma'(u(s))}{2\gamma(u(s))^3}\sigma(u(s))\sigma^*(u(s)),\psi\right\rangle_Hds\nonumber\\
&\quad+\int_0^t\int_{Z_1}\left\langle\frac{G(u(s-),z)}{\gamma(u(s-))},\psi\right\rangle_H\tilde{\mathcal N}(ds,dz)\nonumber\\
&\quad+\int_0^t\int_{\mathbb R^m\setminus Z_1}\left\langle\frac{G(u(s-),z)}{\gamma(u(s-))},\psi\right\rangle_H\mathcal N(ds,dz)\nonumber\\
&\quad+\left\langle\sum_{0<s\le t}\left[u(s)-u(s-)-\frac1{\gamma(u(s-))}\int_{u(s-)}^{u(s)}\gamma(r)\,dr\right],\psi \right\rangle_H .
\end{align}
\end{definition}

\begin{remark}
The expression \(\int_{u(s-)}^{u(s)}\gamma(r)\,dr\) in~\eqref{def-g-1} is understood pointwise in the spatial variable.  
It denotes the function \(x\mapsto \int_{u(s-,x)}^{u(s,x)}\gamma(r)\,dr .\) Moreover, the term \(\sigma(u)\sigma^*(u)\)
in~\eqref{LAST} and~\eqref{def-g-1} should be understood according to the
specific structure of the Wiener noise coefficient. In the case of multiplicative noise, it is the corresponding pointwise quadratic variation
density. For a general Hilbert--Schmidt coefficient, this term should be interpreted as the trace correction associated with the It\^o formula.
\end{remark}

\subsection{The Meyer--Zheng topology}

We next recall the Meyer--Zheng topology~\cite{BlathHammerOrtgiese2016,MeyerZheng1984}, also called the pseudo-path
topology. Let \(E\) be a separable Hilbert space. We denote by
\(D([0,T];E)\) the space of all \(E\)-valued c\`adl\`ag functions on
\([0,T]\). For \(x,y\in D([0,T];E)\), define
\[
d_{\rm MZ}(x,y):=\int_0^T\left(1\wedge\|x(t)-y(t)\|_E\right)\,dt .
\]
The topology induced by \(d_{\rm MZ}\) is the topology of convergence in
Lebesgue measure on \([0,T]\) with values in \(E\). That is
\[
x_n\to x \quad\text{in the Meyer--Zheng topology}
\]
if and only if
\[
x_n\to x \quad\text{in Lebesgue measure on }[0,T]\text{ with values in }E.
\]
Equivalently, to each \(x\in D([0,T];E)\) one associates the measure
\(\mu_x(dt,d\xi):=\frac1T\,dt\,\delta_{x(t)}(d\xi)\) on \([0,T]\times E\). The Meyer--Zheng topology is the topology generated
by the weak convergence of the measures \(\{\mu_x\}\). In particular, this topology is weaker than the usual Skorokhod \(J_1\) topology.
In this paper, we shall mainly use the Meyer--Zheng topology on \(D([0,T];H^{-1}).\)

\begin{definition}
Let \(X=\{X(t)\}_{t\in[0,T]}\) be a real-valued adapted c\`adl\`ag process.
Its conditional variation on \([0,T]\) is defined by
\[
{\rm CV}_T(X):=\sup_{\pi}\mathbb E\left[\sum_{i=0}^{n-1}\left|\mathbb E\left[X(t_{i+1})-X(t_i)\mid\mathcal F_{t_i}\right]\right|+|X(T)|\right],
\]
where the supremum is taken over all partitions \(\pi:~ 0=t_0<t_1<\cdots<t_n=T.\) If \({\rm CV}_T(X)<\infty,\) then \(X\) is called a quasimartingale.
\end{definition}

\begin{theorem}[Meyer--Zheng tightness criterion]\label{thm:MZ-tightness}
Let \(\{X^n\}_{n\ge1}\) be a sequence of real-valued adapted c\`adl\`ag
processes on \([0,T]\). Suppose that each \(X^n\) is a quasimartingale and
\[
\sup_{n\ge1}{\rm CV}_T(X^n)<\infty.
\]
Then the laws of \(\{X^n\}_{n\ge1}\) are tight in
\(D([0,T];\mathbb R)\) endowed with the Meyer--Zheng topology.
\end{theorem}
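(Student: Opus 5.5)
The plan is to follow Meyer and Zheng: use the Rao decomposition of a quasimartingale to reduce tightness to two facts, a uniform bound on suprema and a uniform bound on upcrossings, and then use the fact that the Meyer--Zheng topology only sees the occupation measures $\mu_{X^n}$ on $[0,T]\times\mathbb R$.

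\textbf{Step 1 (maximal and upcrossing bounds).} Set $K:=\sup_n{\rm CV}_T(X^n)$. By Rao's theorem each quasimartingale $X^n$ can be written as $X^n=Y^n-Z^n$ with $Y^n,Z^n$ nonnegative c\`adl\`ag supermartingales and $\mathbb E[Y^n(0)+Z^n(0)]\le {\rm CV}_T(X^n)\le K$. The supermartingale maximal inequality then gives
\[
\mathbb P\Bigl(\sup_{t\le T}|X^n(t)|\ge \lambda\Bigr)\le \frac{2K}{\lambda}.
\]
Doob's upcrossing inequality, applied to $Y^n$ and $Z^n$ (or directly to $X^n$ through its conditional variation), gives that the number $U^n_{a,b}$ of upcrossings of $[a,b]$ by $X^n$ satisfies
\[
\mathbb E\,U^n_{a,b}\le \frac{C(K+|a|)}{b-a}.
\]
Both bounds are uniform in $n$.

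\textbf{Step 2 (a deterministic compact set).} For constants $M$ and $N_{a,b}$ with $a<b$ rational, consider
\[
\mathcal K=\Bigl\{x\in D([0,T];\mathbb R):\ \sup_t|x(t)|\le M,\ U_{a,b}(x)\le N_{a,b}\ \text{for all rational } a<b\Bigr\}.
\]
I will show that $\mathcal K$ is relatively compact in the Meyer--Zheng topology. Since $[-M,M]$ is compact, the occupation measures of paths in $\mathcal K$ are automatically tight on $[0,T]\times[-M,M]$. The real content is that every weak limit of such measures is again of the form $\mu_x$ for some c\`adl\`ag $x$, and the upcrossing control is exactly what guarantees this. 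I would follow Meyer--Zheng's Lemma 1: controlled upcrossings on every rational interval $[a,b]$ make the paths in $\mathcal K$ uniformly approximable in measure by step functions with a bounded number of jumps. A diagonal argument then gives a subsequence converging in Lebesgue measure to a function of bounded upcrossings, and such a function has a c\`adl\`ag version.

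This identification of the limit, that is, showing the closure of $\mathcal K$ consists of genuine paths rather than Young measures, is the main obstacle. My first attempt would be a Helly-type selection: for each rational level, record the crossing times, which are bounded in number, extract convergent subsequences of these times, and reconstruct the limit path from them.

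\textbf{Step 3 (conclusion).} Given $\delta>0$, choose $M$ with $2K/M<\delta/2$. Choose $N_{a,b}$ with $\mathbb P(U^n_{a,b}>N_{a,b})\le \delta 2^{-j}$, where $j$ indexes an enumeration of the rational pairs $a<b$, so that these probabilities sum to at most $\delta/2$. By Markov's inequality and the bounds of Step 1, these choices are possible uniformly in $n$, and so $\mathbb P(X^n\in\mathcal K)\ge 1-\delta$ for all $n$. Since $\mathcal K$ is relatively compact by Step 2, the laws of $\{X^n\}$ are tight in $D([0,T];\mathbb R)$ with the Meyer--Zheng topology, which is the assertion of Theorem~\ref{thm:MZ-tightness}.
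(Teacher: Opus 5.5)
Your proposal is correct and is essentially the original Meyer--Zheng argument: a uniform maximal bound via the Rao decomposition, a uniform upcrossing bound, and relative compactness in measure of sets with bounded supremum and bounded upcrossings (your uniform approximation by step functions with boundedly many jumps does work). This is exactly what the paper relies on, since it states Theorem~\ref{thm:MZ-tightness} without proof and cites \cite{MeyerZheng1984}.

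One small correction: Doob's inequality applied to \(Y^n\) and \(Z^n\) separately does not control the upcrossings of \(X^n=Y^n-Z^n\), because two nonincreasing functions can have an oscillating difference. Use instead your parenthetical alternative, the direct bound \((b-a)\,\mathbb E\,U^n_{a,b}\le {\rm CV}_T(X^n)+|a|\), which follows from Doob's predictable buy-below-\(a\), sell-above-\(b\) strategy.
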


The preceding criterion is real-valued. For infinite-dimensional
processes, we shall apply it to finite-dimensional projections and then
control the  tail. The following form will be useful later.

\begin{lemma}[Projection criterion for Meyer--Zheng tightness]
\label{lem:MZ-projection-tightness}
Let \(\{Y^n\}_{n\ge1}\) be a sequence of \(H^{-1}\)-valued adapted c\`adl\`ag processes. Assume that, for every \(k\ge1\),
\[
\sup_{n\ge1}{\rm CV}_T\bigl(\langle Y^n,e_k\rangle_{H^{-1},H^1}\bigr)<\infty,
\]
and that, for every \(\eta>0\),
\[
\lim_{N\to\infty}\sup_{n\ge1}\mathbb P\left(\int_0^T\|(I-P_N)Y^n(t)\|_{H^{-1}}\,dt>\eta\right)=0.
\]
Then the laws of \(\{Y^n\}_{n\ge1}\) are tight in \(D([0,T];H^{-1})\) endowed with the Meyer--Zheng topology.
\end{lemma}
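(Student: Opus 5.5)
The plan is to reduce tightness of \(\{Y^n\}\) in \(D([0,T];H^{-1})_{\rm MZ}\) to a relative compactness criterion for convergence in measure. For each fixed \(N\), tightness of the vector of coordinates \((\langle Y^n,e_1\rangle,\dots,\langle Y^n,e_N\rangle)\) will come from Theorem~\ref{thm:MZ-tightness}. The tail condition in the hypothesis then controls the remaining modes uniformly in \(n\).

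\textbf{Step 1: finite-dimensional tightness.} For each \(k\), Theorem~\ref{thm:MZ-tightness} shows that the laws of \(Y^n_k:=\langle Y^n,e_k\rangle_{H^{-1},H^1}\) are tight in \(D([0,T];\mathbb R)_{\rm MZ}\). For fixed \(N\), the map
\[
x\mapsto (x_1,\dots,x_N)\in D([0,T];\mathbb R^N)
\]
has a product-type MZ metric. Convergence in measure of the vector is equivalent to convergence in measure of each component. A product of tight families of marginals is tight: given \(\delta>0\), pick compacts \(K_k\subset D([0,T];\mathbb R)_{\rm MZ}\) with \(\mathbb P(Y^n_k\notin K_k)\le \delta 2^{-k}\), and take \(K_1\times\cdots\times K_N\), which is compact in the MZ topology on \(\mathbb R^N\)-valued paths. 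So \(P_NY^n\) is tight in \(D([0,T];P_NH^{-1})_{\rm MZ}\).

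\textbf{Step 2: compactness criterion in \(H^{-1}\).} I will use a diagonal construction to build, for \(\delta>0\), a set
\[
K=\bigcap_{N}\Bigl\{x:\ P_Nx\in K_N,\ \int_0^T\|(I-P_{M_j})x\|_{H^{-1}}dt\le \eta_j\ \text{for all } j\Bigr\},
\]
with \(\eta_j\downarrow0\) and \(M_j\) chosen by the hypothesis so that \(\sup_n\mathbb P(\int_0^T\|(I-P_{M_j})Y^n\|dt>\eta_j)\le\delta2^{-j}\). Here \(K_N\) is the compact set from Step 1 with probability defect \(\delta2^{-N}\). Then \(\sup_n\mathbb P(Y^n\notin \bar K)\le 2\delta\).

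It remains to show \(\bar K\) is compact, i.e. \(K\) is totally bounded for \(d_{\rm MZ}\). The main estimate is
\[
1\wedge\|x-y\|_{H^{-1}}\le 1\wedge\|P_Mx-P_My\|+\|(I-P_M)x\|+\|(I-P_M)y\|.
\]
Integrating in \(t\) gives
\[
d_{\rm MZ}(x,y)\le d_{\rm MZ}(P_Mx,P_My)+2\eta_j \quad\text{on } K,\ M=M_j.
\]
So a finite \(\epsilon\)-net of \(K_{M_j}\), which exists by compactness, yields a finite \((\epsilon+2\eta_j)\)-net of \(K\); strictly speaking, it is the preimages that must be handled, by choosing representatives in \(K\).

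\textbf{Step 3: closing the argument.} The closure of \(K\) is taken within the space of equivalence classes of measurable functions, i.e. the completion \(L^0([0,T];H^{-1})\). The MZ (pseudo-path) space is naturally embedded there, with limits identified with paths via the measures \(\mu_x\). This is the main obstacle: limit points in measure need not be càdlàg. I would handle it as Meyer--Zheng do, by viewing laws on the Polish space \(L^0([0,T];H^{-1})\) (equivalently, pseudo-paths). Alternatively, I would note that each coordinate limit lies in the MZ-closed set of quasimartingale-type paths given by Theorem~\ref{thm:MZ-tightness}, so that the limit in measure admits a càdlàg version coordinatewise, and the tail control places it in \(H^{-1}\). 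Prokhorov's theorem on this Polish space then gives tightness.
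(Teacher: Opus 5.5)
Your Steps 1--2 are the paper's argument: coordinatewise tightness from Theorem~\ref{thm:MZ-tightness}, plus $d_{\rm MZ}(x,P_Mx)\le\int_0^T\|(I-P_M)x\|_{H^{-1}}dt$. Your total-boundedness construction makes explicit what the paper compresses into ``combining \dots\ yields tightness''.

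The gap is in Step~3, and it cannot be closed, because under the stated hypotheses the lemma is false. Take $T=1$ and the deterministic processes
\[
Y^n(t)=\sum_{k=1}^n\sqrt{\alpha_k}\,\mathbf{1}_{[\frac1{k+1},\frac1k)}(t)\,e_k .
\]
Since $\|\sqrt{\alpha_k}e_k\|_{H^{-1}}=1$, one has ${\rm CV}_1(\langle Y^n,e_k\rangle_{H^{-1},H^1})\le 2\sqrt{\alpha_k}$ for all $n$ and $\int_0^1\|(I-P_N)Y^n(t)\|_{H^{-1}}dt\le\frac1{N+1}$. So both hypotheses hold. However, $Y^n\to x:=\sum_{k\ge1}\sqrt{\alpha_k}\,\mathbf{1}_{[\frac1{k+1},\frac1k)}e_k$ in $L^1(0,1;H^{-1})$. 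Moreover $\|x(s)-x(t)\|_{H^{-1}}=\sqrt2$ whenever $s,t$ lie in different intervals, so $x$ has no right limit at $0$ along any set of full measure, and hence no c\`adl\`ag version. Therefore no subsequence of $\{Y^n\}$ converges in $(D([0,1];H^{-1}),d_{\rm MZ})$, and the laws $\delta_{Y^n}$ are not tight there.

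Every coordinate of $x$ is c\`adl\`ag and its tail is small in $L^1(dt)$. These are exactly the two facts your second option in Step~3 relies on, so that option fails: coordinatewise regularity plus time-integrated tail control gives no strong right-continuity, because mass can escape to ever higher modes as $t\downarrow t_0$. Your first option (laws on the Polish completion $L^0([0,T];H^{-1})$) is what Steps 1--2 actually prove, but it is strictly weaker than the stated conclusion. The paper's own proof skips exactly this point, so it has the same gap.

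The missing ingredient is tail control uniform in time, for instance
\[
\lim_{N\to\infty}\sup_{n\ge1}\mathbb P\Big(\sup_{t\le T}\|(I-P_N)Y^n(t)\|_{H^{-1}}>\eta\Big)=0 .
\]
Build $K$ with the constraints $\sup_{t}\|(I-P_{M_j})x(t)\|_{H^{-1}}\le\eta_j$ instead. Take $x$ in the $L^0$-closure of $K$. Then each $P_{M_j}x$ coincides a.e. with some $c_j\in K_{M_j}$, since $K_{M_j}$ is compact in $D$ and hence closed in $L^0$. The constraints pass to a.e. limits, so for $i<j$ one gets $\|c_i(t)-c_j(t)\|_{H^{-1}}\le\eta_i$ for a.e. $t$, hence for all $t$ by right-continuity. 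Thus $c_j$ converges uniformly to a c\`adl\`ag version of $x$, the closure of $K$ lies in $D$, and it is compact there.

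In the paper's application this stronger condition is actually available. Indeed $\mathbb E\sup_t\|\eta^\epsilon(t)\|_H^2\le C_T$ and $\epsilon^2\mathbb E\sup_t\|v^\epsilon(t)\|_H^2\le C_T$, and $\|(I-P_N)f\|_{H^{-1}}\le\alpha_{N+1}^{-1/2}\|f\|_H$.
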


\begin{proof}
For each fixed \(N\in\mathbb N\), the process \(P_NY^n\) is
finite-dimensional. By Theorem~\ref{thm:MZ-tightness}, applied to each
coordinate
\[
\langle Y^n,e_k\rangle_{H^{-1},H^1},\qquad 1\le k\le N,
\]
the laws of \(\{P_NY^n\}_{n\ge1}\) are tight in \(D([0,T];P_NH^{-1})\) endowed with the Meyer--Zheng topology.
Moreover,
\[
d_{\rm MZ}(Y^n,P_NY^n)\le\int_0^T\|(I-P_N)Y^n(t)\|_{H^{-1}}\,dt .
\]
The assumed tail estimate therefore implies that \(Y^n-P_NY^n\) is negligible in the Meyer--Zheng topology, uniformly in \(n\), as
\(N\to\infty\). Combining the finite-dimensional tightness with this tail estimate yields the desired tightness of \(\{Y^n\}_{n\ge1}\) in
\(D([0,T];H^{-1})\).
\end{proof}

\textbf{Notation}

Throughout the paper, \(C\) denotes a generic positive constant whose value
may change from line to line. Constants with subscripts, such as
\(C_T\), \(C_{\epsilon,T}\), \(C_\gamma\), may depend only on the indicated
parameters.

All differential operators \(\nabla\), \({\rm div}\), and \(\Delta\) aretaken with respect to the spatial variable \(x\). We denote by
\(\mathcal L_\xi\) the law of a random variable or stochastic process \(\xi\).

We now state the following well-posedness result for the stochastic system~\eqref{e:SLE-11}, formulated in terms of mild solutions as in Definition~\ref{def:P2}. The proof relies on a standard localization and truncation argument combined with fixed-point methods.

\begin{proposition}\label{well-pose-1}
Let $T>0$, and assume that Assumptions~\ref{H1}--\ref{H3} hold. Then for every initial condition $Z(0) = (u_0, v_0) \in \mathcal H$ and every $\epsilon \in (0,1)$, there exists a unique mild solution \(Z^{\epsilon} \in L^2\big(\Omega; \mathcal D([0,T]; \mathcal H)\big)\)
to equation~\eqref{e:X-eps-12}. Moreover, this mild solution coincides with the unique weak solution to~\eqref{e:X-eps-12}.
\end{proposition}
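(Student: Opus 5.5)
The plan is to remove the damping term $\gamma(u^\epsilon)v^\epsilon$ before doing any fixed-point argument. For $u\in H$ and $v\in H^{-1}$ the product $\gamma(u)v$ need not be a well-defined element of $H^{-1}$. So the map $\tilde B$ is not Lipschitz, and in fact not even defined, on all of $\mathcal H$. I would use the primitive $g(r)=\int_0^r\gamma(a)\,da$ from the introduction. Formally $\gamma(u^\epsilon)v^\epsilon=\frac{d}{dt}g(u^\epsilon)$, which suggests the new unknown
\[
w^\epsilon:=\epsilon v^\epsilon+g(u^\epsilon).
\]
The system \eqref{e:SLE-11} then formally becomes
\[
du^\epsilon=\tfrac1\epsilon\bigl(w^\epsilon-g(u^\epsilon)\bigr)dt,\qquad
dw^\epsilon=\Delta u^\epsilon\,dt+F(u^\epsilon)\,dt+\sigma(u^\epsilon)\,dW^Q+\int_{Z_1}G\,\tilde{\mathcal N}(dt,dz)+\int_{\mathbb R^m\setminus Z_1}G\,\mathcal N(dt,dz).
\]
The linear part $(u,w)\mapsto(\frac1\epsilon w,\Delta u)$ is, up to rescaling the second component, the wave generator. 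It therefore generates a $C_0$-group on $H\times H^{-1}$, namely $S^\epsilon$ conjugated by $(u,w)\mapsto(u,w/\epsilon)$. By Assumption~\ref{H3}, $|g(r)-g(s)|\le\gamma_1|r-s|$, so $u\mapsto g(u)$ is globally Lipschitz on $H$ with linear growth. By Assumption~\ref{H2} and Remark~\ref{rem:G-estimates}, $F$, $\sigma$ and $G$ are Lipschitz from $H$ into $H\hookrightarrow H^{-1}$. Hence the transformed system has globally Lipschitz coefficients on the phase space $\mathcal H$.

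\textbf{Step 1: small jumps only.} I would first drop the large-jump integral over $\mathbb R^m\setminus Z_1$. I would solve the mild form of the $(u,w)$-system by a Banach fixed point in $L^2(\Omega;D([0,T];\mathcal H))$, using a weighted sup norm $\mathbb E\sup_t e^{-\beta t}\|\cdot\|^2$ with $\beta$ large, or equivalently by iterating on short time intervals. The stochastic convolutions are controlled in the following way. Since $S^\epsilon$ is a group, I would write the convolution as $S^\epsilon(t)\int_0^tS^\epsilon(-s)\,\cdots$. The maximal inequalities (Doob for the Wiener part, the Burkholder-type $L^2$ inequality for the compensated Poisson integral) then apply, with the constants from Remark~\ref{rem:G-estimates}. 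The same factorization gives a c\`adl\`ag version.

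\textbf{Step 2: large jumps.} Since $\nu(\mathbb R^m\setminus Z_1)<\infty$, the large jumps occur at isolated times $\tau_1<\tau_2<\cdots$, with finitely many on $[0,T]$ almost surely. I would add them by the standard interlacing procedure. Between consecutive jump times I solve the Step~1 equation. At each $\tau_j$ I update $w\mapsto w+G(u(\tau_j-),z_j)$ and leave $u$ unchanged. The linear growth of $G$ and the finite second moment in Assumption~\ref{H1} should give $\mathbb E\sup_{t\le T}\|(u,w)(t)\|_{\mathcal H}^2<\infty$. This follows by a Gronwall argument after rewriting the large-jump integral as its compensated part plus the drift $\int_{\mathbb R^m\setminus Z_1}G\,\nu(dz)$, which is Lipschitz by Remark~\ref{rem:G-estimates}. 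With that rewriting one can even run the fixed point of Step~1 directly with the full noise. Pathwise uniqueness follows from the same Lipschitz estimates and Gronwall's lemma.

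\textbf{Step 3: back to the original variables, and the main obstacle.} I set $v^\epsilon:=\frac1\epsilon(w^\epsilon-g(u^\epsilon))$, which lies in $L^2(\Omega;D([0,T];H^{-1}))$. I then need to show two things. First, $Z^\epsilon=(u^\epsilon,v^\epsilon)$ satisfies \eqref{mild-1}, with the damping term understood through $g$; when $Z(0)\in\mathcal H_1$ it is the literal term. Second, $Z^\epsilon$ is a weak solution of \eqref{e:X-eps-12}, and any weak or mild solution yields, via $w=\epsilon v+g(u)$, a solution of the $(u,w)$-system, so that uniqueness transfers. This identification is the step I expect to be hardest, because it requires the chain rule $d\,g(u^\epsilon)=\gamma(u^\epsilon)\,du^\epsilon$. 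This chain rule is legitimate because $u^\epsilon$ has absolutely continuous paths with $\partial_tu^\epsilon=v^\epsilon$: the noise enters only the $w$-equation. However, when $v^\epsilon\in H^{-1}$ it has to be justified in a weak sense.

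I would first treat smooth data $Z(0)\in\mathcal H_1$ together with Galerkin approximations $P_N$. In that setting $u^\epsilon\in C^1([0,T];H)$ pathwise, so the chain rule is classical, and mild, weak and transformed formulations are all equivalent by the usual test-function argument with $\psi\in C_0^\infty(\mathcal O)$. I would then pass to general $Z(0)\in\mathcal H$ by density. The continuous dependence of the $(u,w)$-solution on the data in $L^2(\Omega;D([0,T];\mathcal H))$, which follows from the Lipschitz structure above, lets every term of the weak formulation pass to the limit. In particular $\gamma(u^\epsilon)v^\epsilon$ appears only through $\langle g(u^\epsilon),\psi\rangle$, which is continuous in $u^\epsilon\in H$. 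This density step also yields the asserted coincidence of the mild and weak solutions for all $Z(0)\in\mathcal H$.
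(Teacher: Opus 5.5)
Your proposal is correct, and it takes a genuinely different route from the paper's proof.

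\textbf{The paper's route.} The paper stays in the variables $Z^\epsilon=(u^\epsilon,v^\epsilon)$ and proceeds as follows:
\begin{enumerate}
\item it asserts that $\tilde B$ is locally Lipschitz on $\mathcal H$;
\item it localizes with the stopping times $\tau_Z^n$, which are in fact measured in the $\mathcal H_1$-norm $\|u\|_{H^1}+\|v\|_H$;
\item it proves a Gronwall bound for $\mathbb E\sup_{t\le T\wedge\tau_Z^n}\|Z(t)\|^2$;
\item it solves a truncated equation by quoting standard Lipschitz theory, which also supplies mild $=$ weak, and then lets $n\to\infty$.
\end{enumerate}
All of this rests on local Lipschitz continuity of $(u,v)\mapsto\gamma(u)v$. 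As you point out, that property is not available on $\mathcal H$, where the product is not even defined for $v\in H^{-1}$. It also fails from $\mathcal H_1$ into $H$ once $d\ge2$, because $H^1\not\subset L^\infty$.

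\textbf{What your route buys.} Your unknown $w^\epsilon=\epsilon v^\epsilon+g(u^\epsilon)$ is exactly the paper's later $Y^\epsilon=\eta^\epsilon+\epsilon v^\epsilon$ from \eqref{lemma-g-3}. It turns the damping into the globally Lipschitz term $-\frac1\epsilon g(u)$ in the $u$-equation. Consequently no localization or a priori bound is needed. Existence, pathwise uniqueness and c\`adl\`ag paths all come from a single fixed point on short intervals.

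\textbf{What it costs.} For $\mathcal H$-valued data, the statement itself must be reinterpreted, with the damping term understood through $g$. The cleanest way to make this precise is the integration-by-parts identity
\[
\int_0^tS^\epsilon(t-s)\Bigl(0,\tfrac1\epsilon\gamma(u(s))v(s)\Bigr)ds=\Bigl(0,\tfrac1\epsilon g(u(t))\Bigr)-S^\epsilon(t)\Bigl(0,\tfrac1\epsilon g(u_0)\Bigr)+\int_0^tS^\epsilon(t-s)\Bigl(\tfrac1\epsilon g(u(s)),0\Bigr)ds .
\]
Its right-hand side is defined for every $u\in H$, and it reproduces the mild form of your $(u,w)$-system exactly. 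With this identity, mild $\Leftrightarrow$ weak for \eqref{e:X-eps-12} reduces to the standard equivalence for the Lipschitz $(u,w)$-system, followed by the algebraic change of variables $v=(w-g(u))/\epsilon$. Your Galerkin/density detour is therefore optional.

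\textbf{Two minor points.}
\begin{itemize}
\item Since $v^\epsilon$ jumps, $u^\epsilon$ is Lipschitz in time rather than $C^1$. This still suffices for the chain rule.
\item Your claim that the damping term is literal when $Z(0)\in\mathcal H_1$ requires persistence of $H^1\times H$ regularity. The fixed point in $\mathcal H$ does not give this, because $g$ is not Lipschitz on $H^1$. It does follow from the linear growth $\|g(u)\|_{H^1}\le\gamma_1\|u\|_{H^1}$: the Picard scheme has an invariant ball, and you combine this with lower semicontinuity of the $\mathcal H_1$-norm under $\mathcal H$-convergence.
\end{itemize}
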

\begin{proof}
Omitting the superscript $\epsilon$, and under Assumptions~\ref{H1}--\ref{H3},
the nonlinear term $\tilde B(Z)$ is locally Lipschitz on $\mathcal H$ due to
the presence of the damping term $\gamma(u)v$.
More precisely, for every $R>0$ there exists a constant $L_{K,R}>0$ such that
\begin{equation}\label{assump-1}
\|\tilde B(Z^1)-\tilde B(Z^2)\|_{\mathcal H}
\le L_{K,R}\|Z^1-Z^2\|_{\mathcal H},
\end{equation}
for all $Z^1,Z^2\in\mathcal H$ satisfying
$\|Z^1\|_{\mathcal H}+\|Z^2\|_{\mathcal H}\le R$.

For $n\ge1$, we define the stopping time \(\tau_Z^n:=\inf\big\{t>0:\ \|Z(t)\|_{\mathcal H}\ge n\big\},\)
where $\|Z\|_{\mathcal H}:=\|u\|_{H^1}+\|v\|_H$. By equation~\eqref{e:X-eps-12}, we obtain
\begin{eqnarray}\label{well-2}
\mathbb E\sup_{t\in[0,T\wedge\tau_Z^n]}\|Z(t)\|_{\mathcal H}^2
&\le& 4\Bigg[\mathbb E\|S(t)Z(0)\|_{\mathcal H}^2+ \sum_{i=1}^4 I_i\Bigg],
\end{eqnarray}
where
\begin{align*}
I_1 &:= \mathbb E\sup_{t\in[0,T\wedge\tau_Z^n]}
\Big\|\int_0^t S(t-s)\tilde B(Z(s))\,ds\Big\|_{\mathcal H}^2,\\
I_2 &:= \mathbb E\sup_{t\in[0,T\wedge\tau_Z^n]}
\Big\|\int_0^t S(t-s)\tilde\sigma(Z(s))\,dW^Q(s)\Big\|_{\mathcal H}^2,\\
I_3 &:= \mathbb E\sup_{t\in[0,T\wedge\tau_Z^n]}
\Big\|\int_0^t\!\!\int_{Z_1}
S(t-s)\tilde G(Z(s-),z)\,\tilde{\mathcal N}(ds,dz)\Big\|_{\mathcal H}^2,\\
I_4 &:= \mathbb E\sup_{t\in[0,T\wedge\tau_Z^n]}
\Big\|\int_0^t\!\!\int_{\mathbb R^m\setminus Z_1}
S(t-s)\tilde G(Z(s-),z)\,\mathcal N(ds,dz)\Big\|_{\mathcal H}^2 .
\end{align*}
We estimate the terms $I_i$ separately.
For $I_1$, by Assumptions~\ref{H2}--\ref{H3} and the boundedness of the semigroup $\{S(t)\}_{t\ge0}$, we have
\begin{eqnarray}\label{well-3}
I_1 &\le&C_S^2\,\mathbb E\sup_{t\in[0,T\wedge\tau_Z^n]}\int_0^t\|\tilde B(Z(s))\|_{\mathcal H}^2\,ds \nonumber\\
&\le&\frac{4C_S^2}{\epsilon^2}\mathbb E\sup_{t\in[0,T\wedge\tau_Z^n]}\int_0^t\big(\|F(u(s))\|_H^2+\gamma_1^2\|v(s)\|_H^2\big)\,ds \nonumber\\
&\le& C_{\gamma_1,L_F}\mathbb E\int_0^T\sup_{r\in[0,s\wedge\tau_Z^n]}\|Z(r)\|_{\mathcal H}^2\,ds .
\end{eqnarray}
For $I_2$, by the Burkholder--Davis--Gundy inequality and Assumption~\ref{H2}, we obtain
\begin{eqnarray}\label{well-4}
I_2 \le C_S^2 C_2 L_\sigma^2\Bigg(1+\int_0^T\mathbb E\sup_{s\in[0,t\wedge\tau_Z^n]}\|Z(s)\|_{\mathcal H}^2\,dt\Bigg).
\end{eqnarray}
For the jump terms $I_3$ and $I_4$, by Assumptions~\ref{H1}--\ref{H2} and Kunita’s first inequality (see, e.g.,~\cite{Applebaum}), we obtain
\begin{eqnarray}\label{well-levy-1}
I_3+I_4&\le& 2C_S^2 C_2\int_0^T\mathbb E\sup_{s\in[0,t\wedge\tau_Z^n]}\int_{\mathbb R^m}\|\tilde G(Z(s-),z)\|_{\mathcal H}^2\,\nu(dz)\,dt\nonumber\\
&\le& 2C_S^2 C_2 L_G\int_0^T\mathbb E\sup_{s\in[0,t\wedge\tau_Z^n]}\|Z(s)\|_{\mathcal H}^2\,dt .
\end{eqnarray}
Collecting estimates~\eqref{well-2}--\eqref{well-4} and \eqref{well-levy-1}, we arrive at
\begin{eqnarray}\label{well-gronwall}
\mathbb E\sup_{t\in[0,T\wedge\tau_Z^n]}\|Z(t)\|_{\mathcal H}^2
&\le&C_S\|Z(0)\|_{\mathcal H}^2+ C_{S,\gamma_1,\sigma,F,G}\int_0^T\mathbb E\sup_{s\in[0,t\wedge\tau_Z^n]}\|Z(s)\|_{\mathcal H}^2\,dt .
\end{eqnarray}
By Gronwall’s inequality, it follows that
\begin{equation}\label{well-02-}
\mathbb E\sup_{t\in[0,T\wedge\tau_Z^n]}\|Z(t)\|_{\mathcal H}^2\le \bar C,\qquad n\ge1,
\end{equation}
where
\(\bar C:= C_S \exp\!\big(C_{S,\gamma_1,\sigma,F,G}T\big)\,\mathbb E\|Z(0)\|_{\mathcal H}^2 .\) Consequently,
\[
n^2\,\mathbb P(\tau_Z^n\le T)\le \mathbb E\|Z(T\wedge\tau_Z^n)\|_{\mathcal H}^2 \le \bar C,
\]
which yields
\[
\mathbb P(\tau_Z^n\le T)\le \frac{\bar C}{n^2} \longrightarrow 0, \qquad n\to\infty .
\]
Here and throughout the paper, we adopt the convention $\inf\emptyset=\infty$.
Moreover, since the mapping $Z\mapsto\tau_Z^n$ is continuous on $\mathcal D([0,T];\mathcal H)$,
$\tau_Z^n$ is a stopping time whenever $Z$ is adapted.

We introduce a localization procedure in order to obtain globally Lipschitz
coefficients.
For $n\ge1$ and $Z\in\mathcal D([0,T];\mathcal H)$, define \(\varphi_n(Z)(t):=Z(t\wedge\tau_Z^n),~ t\in[0,T],\)
where $\tau_Z^n:=\inf\{t>0:\|Z(t)\|_{\mathcal H}\ge n\}$. Then the mapping $\varphi_n:\mathcal D([0,T];\mathcal H)\to
\mathcal D([0,T];\mathcal H)$ is continuous and hence measurable.

For $\xi\in\mathcal D([0,T];\mathcal H)$, we define the truncated coefficients
\[
\tilde A^n\xi:=\tilde A\big(\varphi_n(\xi)\big),~~
\tilde B^n(\xi):=\tilde B\big(\varphi_n(\xi)\big),~~
\tilde\sigma^n(\xi):=\tilde\sigma\big(\varphi_n(\xi)\big),~~
\tilde G^n(\xi,z):=\tilde G\big(\varphi_n(\xi),z\big).
\]
By construction, $\tilde B^n$, $\tilde\sigma^n$ and $\tilde G^n$ are globally Lipschitz on $\mathcal H$. We consider the truncated equation
\begin{eqnarray}\label{well-01}
dZ^n(t)&=& \tilde A^n Z^n(t)\,dt+ \tilde B^n\big(Z^n(t)\big)\,dt+ \tilde\sigma^n\big(Z^n(t)\big)\,dW^Q(t) \nonumber\\
&&+ \int_{Z_1}\tilde G^n\big(Z^n(t-),z\big)\,\tilde{\mathcal N}(dt,dz)+ \int_{\mathbb R^m\setminus Z_1}\tilde G^n\big(Z^n(t-),z\big)\,{\mathcal N}(dt,dz),
\end{eqnarray}
with initial condition $Z^n(0)=Z(0)$. By standard results on stochastic evolution equations with L\'evy noise
(see~\cite{linjunbo}), equation~\eqref{well-01} admits a unique weak solution $Z^n$.
Moreover, by~\cite[Theorem~8.2]{peszat}, this solution is also the unique mild solution, which satisfies
\begin{eqnarray}\label{well-1}
Z^n(t)&=& S(t)Z(0)+ \int_0^t S(t-s)\tilde B\big(Z^n(s\wedge\tau_Z^n)\big)\,ds+ \int_0^t S(t-s)\tilde\sigma\big(Z^n(s\wedge\tau_Z^n)\big)\,dW^Q(s)\nonumber\\
&&+ \int_0^t\!\!\int_{Z_1}S(t-s)\tilde G\big(Z^n((s\wedge\tau_Z^n)-),z\big)\,\tilde{\mathcal N}(ds,dz)\nonumber\\
&&+ \int_0^t\!\!\int_{\mathbb R^m\setminus Z_1}S(t-s)\tilde G\big(Z^n((s\wedge\tau_Z^n)-),z\big)\,{\mathcal N}(ds,dz).
\end{eqnarray}
As in the proof of Proposition~\ref{well-pose-1}, we obtain the uniform estimate
\begin{equation}\label{well-02}
\mathbb E\sup_{t\in[0,T]}\|Z^n(t)\|_{\mathcal H}^2 \le \bar C, \qquad n\ge1,
\end{equation}
where \(\bar C:= C_S \exp\!\big(C_{S,\gamma_1,\sigma,F,G}T\big)\,\mathbb E\|Z(0)\|_{\mathcal H}^2 .\)
In particular, $\{Z^n\}_{n\ge1}$ is bounded in
$L^2\big(\Omega;\mathcal D([0,T];\mathcal H)\big)$ uniformly in $n$.
Moreover,
\[
n^2\,\mathbb P(\tau_Z^n\le T) \le\mathbb E\|Z^n(T\wedge\tau_Z^n)\|_{\mathcal H}^2 \le \bar C,
\]
which implies
\[
\mathbb P(\tau_Z^n\le T) \le \frac{\bar C}{n^2}\longrightarrow 0,\qquad n\to\infty .
\]
Consequently, $\tau_Z^n\to\infty$ almost surely as $n\to\infty$, and the solutions $Z^n$ converge to a global solution $Z$ of
equation~\eqref{e:X-eps-12}. This limit $Z$ is the unique mild solution to~\eqref{e:X-eps-12}, and by \cite[Theorem~8.2]{peszat}, it is also the unique weak solution.

\end{proof}

\section{Uniform estimates and compactness}\label{sec:est}

\subsection{Energy estimates and moment bounds}
In this subsection, we derive  the uniform energy estimates for the solution $(u^\epsilon,v^\epsilon)$ of equation~\eqref{e:SLE-11}, which play a  role in the
analysis of the small--mass limit $\epsilon\to0$. The estimates obtained here are uniform with respect to $\epsilon$ and provide
sufficient compactness to identify the limiting dynamics.

\begin{lemma}\label{lemma1-33}
Assume that Assumptions~\ref{H1}--\ref{H3} hold.
Let $(u^{\epsilon},v^{\epsilon})$ be the unique solution to
equation~\eqref{e:SLE-11} with initial condition
$(u_0,v_0)\in\mathcal H$.
Then, for every $T>0$ and all sufficiently small $\epsilon>0$, there exists
a constant $C_T>0$, depending only on
$T$, $L_\sigma$, $L_F$, $L_G$, $u_0$, $v_0$, $\gamma_0$, and $\gamma_1$, such that
\begin{equation}\label{a-priori-u}
\mathbb E\sup_{t\in[0,T]}\|u^{\epsilon}(t)\|_H^2 \le C_T,
\end{equation}
and
\begin{equation}\label{a-priori-v}
\sup_{t\in[0,T]}
\Big(
\epsilon\,\mathbb E\|v^{\epsilon}(t)\|_H^2
+ \mathbb E\|\nabla u^{\epsilon}(t)\|_H^2
\Big)
\le C_T .
\end{equation}
\end{lemma}

\begin{proof}
Notice that, since $du^{\epsilon}(t)=v^{\epsilon}(t)\,dt$, we have
\begin{equation}\label{lemma1-1}
d\|u^{\epsilon}(t)\|_{H}^2
=2\langle u^{\epsilon}(t),du^{\epsilon}(t)\rangle_{H}
=2\langle u^{\epsilon}(t),v^{\epsilon}(t)\rangle_{H}\,dt .
\end{equation}
Moreover, applying It\^{o}'s formula for semimartingales with jumps to the
process $\langle u^{\epsilon}(t),v^{\epsilon}(t)\rangle_{H}$ and using again
$du^{\epsilon}(t)=v^{\epsilon}(t)\,dt$, we obtain
\begin{align}
\epsilon\, d\langle u^{\epsilon}(t),v^{\epsilon}(t)\rangle_{H}
&=\epsilon\langle du^{\epsilon}(t),v^{\epsilon}(t)\rangle_{H}
+\epsilon\langle u^{\epsilon}(t-),dv^{\epsilon}(t)\rangle_{H} \nonumber\\
&=\epsilon\|v^{\epsilon}(t)\|_{H}^2\,dt
+\Big\langle u^{\epsilon}(t),\Delta u^{\epsilon}(t)\Big\rangle_H\,dt
-\Big\langle u^{\epsilon}(t),\gamma(u^{\epsilon}(t))v^{\epsilon}(t)\Big\rangle_H\,dt \nonumber\\
&\quad
-\Big\langle u^{\epsilon}(t),F(u^{\epsilon}(t))\Big\rangle_H\,dt
+\Big\langle u^{\epsilon}(t),\sigma(u^{\epsilon}(t))\,dW^Q(t)\Big\rangle_H \nonumber\\
&\quad
+\int_{Z_1}\Big\langle u^{\epsilon}(t-),G(u^{\epsilon}(t-),z)\Big\rangle_H\,\tilde{\mathcal N}(dt,dz)\nonumber\\
&\quad+\int_{\mathbb R^m\setminus Z_1}\Big\langle u^{\epsilon}(t-),G(u^{\epsilon}(t-),z)\Big\rangle_H\, {\mathcal N}(dt,dz).
\label{lemma1-cross}
\end{align}
Here we used that the quadratic covariation term vanishes since $u^{\epsilon}$
has finite variation. Using the identity
$\langle u^{\epsilon}(t),\Delta u^{\epsilon}(t)\rangle_H=-\|\nabla u^{\epsilon}(t)\|_H^2$,
we can rearrange~\eqref{lemma1-cross} to obtain
\begin{align}\label{lemma1-2-plusF}
\big\langle u^{\epsilon}(t),\gamma(u^{\epsilon}(t))v^{\epsilon}(t)\big\rangle_H\,dt
&= -\epsilon\, d\langle u^{\epsilon}(t),v^{\epsilon}(t)\rangle_H
-\|\nabla u^{\epsilon}(t)\|_H^2\,dt
+\big\langle u^{\epsilon}(t),F(u^{\epsilon}(t))\big\rangle_H\,dt  \nonumber\\
&\quad
+\epsilon\|v^{\epsilon}(t)\|_H^2\,dt
+\big\langle u^{\epsilon}(t),\sigma(u^{\epsilon}(t))\,dW^Q(t)\big\rangle_H \nonumber\\
&\quad
+\int_{Z_1}\big\langle u^{\epsilon}(t-),G(u^{\epsilon}(t-),z)\big\rangle_H\,\tilde{\mathcal N}(dt,dz)\nonumber\\
&\quad+\int_{\mathbb R^m\setminus Z_1}\big\langle u^{\epsilon}(t-),G(u^{\epsilon}(t-),z)\big\rangle_H\, {\mathcal N}(dt,dz).
\end{align}
Define
\[
\Phi(r):=\int_0^r x\,\gamma(x)\,dx,\qquad r\in\mathbb R,
\qquad\text{and}\qquad
\Gamma(u):=\int_{\mathcal O}\Phi\big(u(x)\big)\,dx,\qquad u\in H.
\]
By Assumption~\ref{H3}, for all $r\in\mathbb R$,
\(
0\le \frac{\gamma_0}{2}r^2\le \Phi(r)\le \frac{\gamma_1}{2}r^2,
\)
and hence, for all $u\in H$,
\(
0\le \frac{\gamma_0}{2}\|u\|_H^2 \le \Gamma(u)\le \frac{\gamma_1}{2}\|u\|_H^2 .
\)
Since $u^{\epsilon}$ has continuous paths and
$du^{\epsilon}(t)=v^{\epsilon}(t)\,dt$, the mapping
$t\mapsto \Gamma(u^{\epsilon}(t))$ is absolutely continuous and
\begin{eqnarray}
\frac{d}{dt}\Gamma(u^{\epsilon}(t))
=\int_{\mathcal O}\Phi'\big(u^{\epsilon}(t,x)\big)\,v^{\epsilon}(t,x)\,dx
&=&\int_{\mathcal O}u^{\epsilon}(t,x)\gamma\big(u^{\epsilon}(t,x)\big)v^{\epsilon}(t,x)\,dx \nonumber\\
&=&\langle u^{\epsilon}(t),\gamma(u^{\epsilon}(t))v^{\epsilon}(t)\rangle_H .
\end{eqnarray}
Therefore,
\begin{eqnarray}\label{lemma1-6}
\int_0^t \langle u^{\epsilon}(s),\gamma(u^{\epsilon}(s))v^{\epsilon}(s)\rangle_H\,ds
= \Gamma(u^{\epsilon}(t))-\Gamma(u_0)
\ge \frac{\gamma_0}{2}\|u^{\epsilon}(t)\|_H^2-\frac{\gamma_1}{2}\|u_0\|_H^2 .
\end{eqnarray}
Combining \eqref{lemma1-1}, \eqref{lemma1-2-plusF} and \eqref{lemma1-6}, we obtain
\begin{eqnarray}\label{lemma1-energy-ineq}
 \frac{\gamma_0}{2}\|u^{\epsilon}(t)\|_H^2
+\int_0^t \|\nabla u^{\epsilon}(s)\|_H^2\,ds 
&\le&
\frac{\gamma_1}{2}\|u_0\|_H^2
-\epsilon\langle u^{\epsilon}(t),v^{\epsilon}(t)\rangle_H
+\epsilon\langle u_0,v_0\rangle_H \nonumber\\
&&
+\epsilon\int_0^t \|v^{\epsilon}(s)\|_H^2\,ds
+\int_0^t \langle u^{\epsilon}(s),F(u^{\epsilon}(s))\rangle_H\,ds \nonumber\\
&&
+\int_0^t \langle u^{\epsilon}(s),\sigma(u^{\epsilon}(s))\,dW^Q(s)\rangle_H \nonumber\\
&&
+\int_0^t\int_{Z_1}\langle u^{\epsilon}(s-),G(u^{\epsilon}(s-),z)\rangle_H\,\tilde{\mathcal N}(ds,dz) \nonumber\\
&&
+\int_0^t\int_{\mathbb R^m\setminus Z_1}\langle u^{\epsilon}(s-),G(u^{\epsilon}(s-),z)\rangle_H\, {\mathcal N}(ds,dz).\nonumber\\
\end{eqnarray}
 Particularly, for $\epsilon\in(0,1)$ and some constant $C_0=\|(u_0,v_0)\|_{\mathcal H}^2$, by Young's inequality and
Assumption~\ref{H2}, we obtain
\begin{eqnarray}
&& \frac{\gamma_0}{4}\,\|u^{\epsilon}(t)\|_{H}^2
+ \int_0^t \|\nabla u^{\epsilon}(s)\|_{H}^2\,ds \nonumber\\
&\leq&
\gamma_1 C_0
+ L_F\Big(T + \int_0^t \|u^{\epsilon}(s)\|_{H}^2\,ds\Big)
+ \frac{4\epsilon^2}{\gamma_0}\,\|v^{\epsilon}(t)\|_{H}^2
+ \int_0^t \epsilon\,\|v^{\epsilon}(s)\|_{H}^2\,ds \nonumber\\
&&
+ \int_0^t \langle u^{\epsilon}(s),
\sigma(u^{\epsilon}(s))\,dW^Q(s)\rangle_H  + \int_0^t \Big\langle u^{\epsilon}(s),
\int_{Z_1} G(u^{\epsilon}(s-),z)\,\tilde{\mathcal N}(ds,dz) \Big\rangle_H \nonumber\\
&&
+ \int_0^t \Big\langle u^{\epsilon}(s),
\int_{\mathbb R^m\setminus Z_1} G(u^{\epsilon}(s-),z)\,{\mathcal N}(ds,dz) \Big\rangle_H .
\end{eqnarray}
Taking the supremum over $t\in[0,T]$ and expectations, and applying
the Burkholder--Davis--Gundy inequality to the Gaussian term together with Kunita's first inequality for the jump terms (see~\cite[Theorem~4.4.23]{Applebaum}),
we infer from Assumption~\ref{H2} that
\begin{eqnarray}
&& \mathbb{E}\sup_{t\in[0,T]}\|u^{\epsilon}(t)\|_{H}^2
+ \mathbb{E}\int_0^T \|\nabla u^{\epsilon}(s)\|_{H}^2\,ds \nonumber\\
&\leq&
C_T\Big[1 + \epsilon^2\,\mathbb{E}\sup_{t\in[0,T]}\|v^{\epsilon}(t)\|_{H}^2+ \epsilon\,\mathbb{E}\int_0^T \|v^{\epsilon}(s)\|_{H}^2\,ds\Big] \nonumber\\
&&+ C_{L_F,L_\sigma}\int_0^T \mathbb{E}\|u^{\epsilon}(s)\|_{H}^2\,ds+ 2\int_0^T \int_{\mathbb R^m} \|G(u^{\epsilon}(s-),z)\|_{H}^2\,\nu(dz)\,ds \nonumber\\
&\leq& C_T\Big[ 1 + \epsilon^2\,\mathbb{E}\sup_{t\in[0,T]}\|v^{\epsilon}(t)\|_{H}^2+ \epsilon\,\mathbb{E}\int_0^T \|v^{\epsilon}(s)\|_{H}^2\,ds
\Big]  + C\int_0^T \mathbb{E}\|u^{\epsilon}(s)\|_{H}^2\,ds .
\end{eqnarray}
Applying Gronwall's inequality yields
\begin{eqnarray}\label{lemma1-3}
&&  \mathbb{E}\sup_{t\in [0,T]}\|u^{\epsilon }(t)\|_{H}^2+  \mathbb{E}\sup_{t\in [0,T]}\int_0^t \|\nabla u^{\epsilon }(s)\|_{H}^2\,ds \nonumber\\
&\leq& C_{T,\sigma,G,F}\left(1+ \epsilon^2\,\mathbb{E}\sup_{t\in [0,T]}\| v^{\epsilon}(t)\|_{H}^2+ \epsilon\,\mathbb{E}\int_0^T \|v^{\epsilon}(s)\|_{H}^2\,ds\right).
\end{eqnarray}
 Next, we show that
\[
\epsilon^2 \mathbb{E}\sup_{t\in[0,T]}\|v^{\epsilon}(t)\|_{H}^2 + \epsilon \mathbb{E}\int_0^T \|v^{\epsilon}(s)\|_{H}^2\,ds \le C_T .
\]
We apply It\^o's formula to $\|v^{\epsilon}(t)\|_{H}^2$.
Recalling that
\[
\epsilon\, dv^{\epsilon}= \Delta u^{\epsilon}\,dt- \gamma(u^{\epsilon}) v^{\epsilon}\,dt+ F(u^{\epsilon})\,dt+ \sigma(u^{\epsilon})\,dW^Q
+ \int_{\mathbb R^m} G(u^{\epsilon},z)\,\tilde{\mathcal N}(dt,dz),
\]
we obtain
\begin{eqnarray}\label{e:v-v}
\epsilon\, d\|v^{\epsilon}(t)\|_{H}^2
&=& 2\Big(
\langle v^{\epsilon}(t),\Delta u^{\epsilon}(t)\rangle_H
- \langle v^{\epsilon}(t),\gamma(u^{\epsilon}(t))v^{\epsilon}(t)\rangle_H
+ \langle v^{\epsilon}(t),F(u^{\epsilon}(t))\rangle_H
\Big)\,dt \nonumber\\
&& + 2\langle v^{\epsilon}(t),\sigma(u^{\epsilon}(t))\,dW^Q(t)\rangle_H
+ \frac{1}{\epsilon}\|\sigma(u^{\epsilon}(t))\|_{L_2(H^Q,H)}^2\,dt \nonumber\\
&& + \epsilon\int_{\mathbb R^m}
\Big(
\|v^{\epsilon}(t-)+\tfrac{G(u^{\epsilon}(t-),z)}{\epsilon}\|_{H}^2
- \|v^{\epsilon}(t-)\|_{H}^2
\Big)\,\tilde{\mathcal N}(dt,dz) \nonumber\\
&& + \epsilon\int_{\mathbb R^m\setminus Z_1}
\Big(
\|v^{\epsilon}(t-)+\tfrac{G(u^{\epsilon}(t-),z)}{\epsilon}\|_{H}^2
- \|v^{\epsilon}(t-)\|_{H}^2
\Big)\,\nu(dz)\,dt \nonumber\\
&& + \epsilon\int_{Z_1}
\Big(
\|v^{\epsilon}(t-)+\tfrac{G(u^{\epsilon}(t-),z)}{\epsilon}\|_{H}^2
- \|v^{\epsilon}(t-)\|_{H}^2
\nonumber\\
&&- 2\big\langle \tfrac{G(u^{\epsilon}(t-),z)}{\epsilon},v^{\epsilon}(t)\big\rangle_H
\Big)\,\nu(dz)\,dt .
\end{eqnarray}

Integrating \eqref{e:v-v} over $[0,t]$ and using Assumptions~\ref{H2}--\ref{H3},
we obtain
\begin{eqnarray}\label{ito-es-1}
&& \epsilon\|v^{\epsilon}(t)\|_{H}^2
+ 2\gamma_0\int_0^t \|v^{\epsilon}(s)\|_{H}^2\,ds
+ \|\nabla u^{\epsilon}(t)\|_{H}^2 \nonumber\\
&\le&
\epsilon\|v_0\|_{H}^2 + \|\nabla u_0\|_{H}^2
+ 2\int_0^t \langle v^{\epsilon}(s),F(u^{\epsilon}(s))\rangle_H\,ds \nonumber\\
&& + \frac{2L_\sigma^2}{\epsilon}\int_0^t (1+\|u^{\epsilon}(s)\|_{H}^2)\,ds
+ 2\int_0^t \langle v^{\epsilon}(s),\sigma(u^{\epsilon}(s))\,dW^Q(s)\rangle_H \nonumber\\
&& + I_1 + I_2 + I_3 .
\end{eqnarray}
Applying the Burkholder--Davis--Gundy inequality and Assumption~\ref{H2},
there exists a positive constant $C_{\gamma_0}$ such that
\begin{eqnarray}\label{ito-l-1}
\mathbb{E}\sup_{t\in [0,T]}
\left|\int_0^t \langle v^\epsilon(s),\sigma(u^\epsilon(s))\,dW^Q(s)\rangle_H\right|
\leq
\frac{\gamma_0}{12}\int_0^T \mathbb{E}\|v^\epsilon(s)\|_H^2\,ds
+ C_{\gamma_0}\int_0^T \mathbb{E}\|u^\epsilon(s)\|_H^2\,ds .
\end{eqnarray}
Moreover, by Kunita's first inequality
\cite[Theorem~4.4.23]{Applebaum} and Assumption~\ref{H2}, we have
\begin{eqnarray}\label{ito-l-2}
\mathbb{E}\sup_{t\in [0,T]} I_2
&\leq&
\frac{\gamma_0}{6}\int_0^T \mathbb{E}\|v^\epsilon(s)\|_H^2\,ds
+ C_{\gamma_0}\int_0^T \mathbb{E}\|u^\epsilon(s)\|_H^2\,ds
\nonumber\\
&&\qquad
+ \frac{L_G^2}{\epsilon}
\Big(1+\int_0^T \mathbb{E}\|u^\epsilon(s)\|_H^2\,ds\Big),
\end{eqnarray}
and
\begin{eqnarray}\label{ito-l-3}
\mathbb{E}\sup_{t\in [0,T]} I_3
\le
\frac{L_G^2}{\epsilon}
\Big(1+\int_0^T \mathbb{E}\|u^\epsilon(s)\|_H^2\,ds\Big).
\end{eqnarray}
Similarly, applying Assumption~\ref{H2} and Kunita's first inequality,
there exists a positive constant $C_{\gamma_0}$ such that
\begin{eqnarray}\label{ito-l-4}
&&\mathbb{E}\sup_{t\in [0,T]} I_1
\le
\int_0^T\int_{\mathbb{R}^m}
\Big(
\frac{\|G(u^\epsilon(s-),z)\|_H^2}{\epsilon}
+ 2\big|\langle G(u^\epsilon(s-),z),v^\epsilon(s)\rangle_H\big|
\Big)\nu(dz)\,ds
\nonumber\\
&\leq&
\frac{\gamma_0}{6}\int_0^T \mathbb{E}\|v^\epsilon(s)\|_H^2\,ds
+ C_{\gamma_0}\int_0^T \mathbb{E}\|u^\epsilon(s)\|_H^2\,ds
+ \frac{L_G^2}{\epsilon}
\Big(1+\int_0^T \mathbb{E}\|u^\epsilon(s)\|_H^2\,ds\Big).\nonumber\\
\end{eqnarray}
Combining \eqref{ito-es-1}--\eqref{ito-l-3} with \eqref{lemma1-3},
we obtain
\begin{eqnarray}\label{lemma1-new4}
\epsilon^2 \mathbb{E}\sup_{t\in[0,T]}\|v^{\epsilon}(t)\|_{H}^2
+ \epsilon\gamma_0 \mathbb{E}\int_0^T \|v^{\epsilon}(s)\|_{H}^2\,ds
+ \epsilon \mathbb{E}\sup_{t\in[0,T]}\|\nabla u^{\epsilon}(t)\|_{H}^2
\le C_T .
\end{eqnarray}
Combining \eqref{lemma1-3} with \eqref{lemma1-new4}, we first note that
\begin{equation}\label{aux-v-bound}
\epsilon^2 \mathbb{E}\sup_{t\in[0,T]}\|v^{\epsilon}(t)\|_{H}^2
+ \epsilon \mathbb{E}\int_0^T \|v^{\epsilon}(s)\|_{H}^2\,ds
\le C_T .
\end{equation}
Substituting \eqref{aux-v-bound} into the right-hand side of
\eqref{lemma1-3}, we obtain
\begin{equation}\label{lemma1-4}
\mathbb{E}\sup_{t\in[0,T]}\|u^{\epsilon}(t)\|_{H}^2
+ \mathbb{E}\int_0^T \|\nabla u^{\epsilon}(s)\|_{H}^2\,ds
\le C_T .
\end{equation}
By using \eqref{lemma1-new4}--(\ref{lemma1-4}),
\begin{eqnarray}
&&\epsilon \mathbb{E}\sup_{t\in [0,T]}\| v^{\epsilon  }(t)\|_{H}^2 +  \mathbb{E}\sup_{t\in [0,T]}  \|\nabla u^{\epsilon  }(t)\|_{H}^2  \nonumber\\
&\leq & \frac{-\gamma_0}{\epsilon}
\left[\mathbb{E} \int_0^T \epsilon\|v^{\epsilon  }(s)\|_{H}^2 {d}s + \mathbb{E} \int_0^T \|\nabla u^{\epsilon  }(s)\|_{H}^2  {d}s \right] +  \frac{C_T}{\epsilon}.
\end{eqnarray}
 Gronwall inequality yields
\begin{eqnarray}\label{lemma1-H1}
\sup_{t\in [0,T]}\epsilon \mathbb{E}\| v^{\epsilon   }(t)\|_{H}^2 +\epsilon \sup_{t\in [0,T]} \mathbb{E}  \|\nabla u^{\epsilon }(t)\|_{H}^2
\leq   C_T .
\end{eqnarray}

\end{proof}

\subsection{Tightness}\label{subsec:tight}
In this subsection, we establish the tightness of the family of laws $\left\{ \mathcal{L}(g(u^\epsilon)) \right\}_{\epsilon > 0}$ in appropriate path spaces, where $g(u^\epsilon)$ denotes the nonlinear transformation of the solution $u^\epsilon$ to equation~\eqref{e:SLE-11}. This transformation is introduced to eliminate the degeneracy induced by the state-dependent damping coefficient $\gamma(\cdot)$, thereby converting the equation into a uniformly parabolic form. The resulting structure enables uniform estimates and compactness arguments for the transformed processes $\{g(u^\epsilon)\}_{\epsilon > 0}$.

\medskip
To this end, define the strictly increasing function
\begin{equation}\label{damping-g}
g(r) := \int_0^r \gamma(u)\,du,
\end{equation}
so that $ g(u^\epsilon)$ satisfies a transformed stochastic evolution equation with uniformly elliptic second-order operator. We derive uniform a priori bounds for $g(u^\epsilon)$ in suitable Sobolev spaces and prove tightness of its law via standard compactness criteria.

\medskip
Under Assumption~\ref{H3}, the function $g$ is globally Lipschitz continuous and satisfies the monotonicity condition
\begin{equation}
(g(r_1) - g(r_2))(r_1 - r_2) \geq \gamma_0 |r_1 - r_2|^2, \qquad \forall r_1, r_2 \in \mathbb{R},
\end{equation}
so that $g$ is a $C^1$-diffeomorphism with inverse $g^{-1}$ satisfying \((g^{-1})'(r) = \frac{1}{\gamma(g^{-1}(r))}.\)

\medskip
For each $\epsilon>0$, define the transformed variable \(\eta^\epsilon(t) := g(u^\epsilon(t)), ~ t \in [0,T].\)
By the boundedness of $\gamma$ from Assumption~\ref{H3}, we obtain
\begin{equation}\label{lemma-g-1}
\|\eta^\epsilon(t)\|_H \leq \gamma_1 \|u^\epsilon(t)\|_H, \qquad 
\end{equation}
Hence, Lemma~\ref{lemma1-33} yields the uniform bound
\begin{equation}\label{lemma-g-2}
\mathbb{E}\sup_{t\in [0,T]}\|\eta^\epsilon(t)\|_H^2  +\mathbb{E}\int_0^T \|\nabla \eta^{\epsilon}(s)\|_{H}^2\,ds
\leq \gamma_1^2 C_T.
\end{equation}

\medskip
Since $u^\epsilon(t) = g^{-1}(\eta^\epsilon(t))$, we can apply the chain rule to obtain
\[
\nabla u^\epsilon(t) = \frac{1}{\gamma(u^\epsilon(t))} \nabla \eta^\epsilon(t), \qquad
\Delta u^\epsilon(t) = \nabla \cdot \left( \frac{1}{\gamma(u^\epsilon(t))} \nabla \eta^\epsilon(t) \right),
\]
by defining 
\begin{equation}\label{def-B}
B(r) := \frac{1}{\gamma(g^{-1}(r))}.
\end{equation}
then \(\Delta u^\epsilon(t) = \nabla \cdot (B(\eta^\epsilon(t)) \nabla \eta^\epsilon(t)).\)
By further applying the chain rule,
\[
\partial_t \eta^\epsilon(t) = \gamma(u^\epsilon(t)) \partial_t u^\epsilon(t)= \epsilon   v^\epsilon(t) , \qquad
\nabla \eta^\epsilon(t) = \gamma(u^\epsilon(t)) \nabla u^\epsilon(t).
\]

\medskip
We now introduce the transformed coefficients
\begin{equation}
\bar{F}(r) := F(g^{-1}(r)), \quad 
\sigma_g(h) := \sigma(g^{-1} \circ h), \quad 
G_g(h,z) := G(g^{-1} \circ h, z), \quad h \in H,~ z\in \mathbb{R}^m.
\end{equation}
With these definitions, the transformed version of the system~\eqref{e:SLE-11} is given by
\begin{align}\label{lemma-g-3}
\eta^\epsilon(t) + \epsilon   v^\epsilon(t) 
&= g(u_0) + \epsilon v_0 
+ \int_0^t \nabla \cdot \left( B(\eta^\epsilon(s)) \nabla \eta^\epsilon(s) \right)\,ds 
+ \int_0^t \bar{F}(\eta^\epsilon(s))\,ds \nonumber \\
&\quad + \int_0^t \sigma_g(\eta^\epsilon(s))\,dW^Q(s)
+ \int_0^t \int_{Z_1} G_g(\eta^\epsilon(s),z)\,\tilde{{\mathcal N}}(ds,dz) \nonumber \\
&\quad + \int_0^t \int_{\mathbb{R}^m \setminus Z_1} G_g(\eta^\epsilon(s),z)\,{\mathcal N}(ds,dz).
\end{align}

\medskip
Finally, using the bounds from Assumption~\textbf{(H\ref{H3})}, we have
\begin{equation}\label{lemma-Br}
\frac{1}{\gamma_1} \leq B(r) \leq \frac{1}{\gamma_0}, \qquad \forall r \in \mathbb{R},
\end{equation}
so that the second-order operator in~\eqref{lemma-g-3} is uniformly elliptic.

\begin{proposition}\label{wellpose--limit-1}
Let $T > 0$ and assume that \textnormal{(H\ref{H1})}--\textnormal{(H\ref{H3})} hold. For every initial condition $u_0 \in H^1$, there exists a unique weak solution $u$ to equation~\textnormal{(\ref{LAST})} on $[0,T]$.
\end{proposition}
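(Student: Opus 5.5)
The plan is to work in the transformed variable \(\eta=g(u)\), where \eqref{LAST} becomes a quasilinear parabolic equation without any correction terms, and then transfer back. The first step is to show that, under \(u=g^{-1}(\eta)\), the weak formulation \eqref{def-g-1} is equivalent to
\begin{equation*}
\eta(t)=g(u_0)+\int_0^t\nabla\cdot\bigl(B(\eta)\nabla\eta\bigr)ds+\int_0^t\bar F(\eta)\,ds+\int_0^t\sigma_g(\eta)\,dW^Q+\int_0^t\!\!\int_{\mathbb R^m}G_g(\eta(s-),z)\,\tilde{\mathcal N}(ds,dz),
\end{equation*}
understood weakly against \(\psi\in C_0^\infty(\mathcal O)\), with the large jumps taken against \(\mathcal N\) plus the compensator. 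To verify the equivalence, apply the It\^o formula for semimartingales with jumps, pointwise in \(x\) (or in Galerkin coordinates followed by a limit), to \(u=g^{-1}(\eta)\), using \((g^{-1})'=1/\gamma(u)\) and \((g^{-1})''=-\gamma'(u)/\gamma(u)^3\).

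In that It\^o formula, the second-order term produces exactly \(-\frac{\gamma'(u)}{2\gamma(u)^3}\sigma\sigma^*\). The jump part gives \(g^{-1}(\eta(s-)+\Delta\eta)-g^{-1}(\eta(s-))-\frac{\Delta\eta}{\gamma(u(s-))}\). Since \(\Delta\eta=\int_{u(s-)}^{u(s)}\gamma(r)\,dr\), this is precisely the summand in \eqref{def-g-1}. The drift term \(\nabla\cdot(B(\eta)\nabla\eta)\) multiplied by \((g^{-1})'\) gives, after integration by parts, the term \(-\int\nabla u\cdot\nabla(\psi/\gamma(u))\). Hence weak solutions of \eqref{LAST} and of the \(\eta\)-equation correspond bijectively. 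The regularity class \(L^2(\Omega;D([0,T];H))\cap L^2(\Omega;L^2(0,T;H^1))\) is preserved because \(g\) and \(g^{-1}\) are bi-Lipschitz and \(\nabla\eta=\gamma(u)\nabla u\).

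For existence in the \(\eta\)-equation, I would use the variational (Krylov--Rozovskii / Brze\'zniak--Liu--Zhu for L\'evy noise) framework on the Gelfand triple \(H^1\subset H\subset H^{-1}\). The operator \(A(\eta)=\nabla\cdot(B(\eta)\nabla\eta)\) is hemicontinuous, coercive because \(B\ge1/\gamma_1\) by \eqref{lemma-Br}, and bounded by \(\|A(\eta)\|_{H^{-1}}\le\gamma_0^{-1}\|\eta\|_{H^1}\). It is, however, not monotone in the usual sense. Existence would therefore go via Galerkin approximation: take energy estimates exactly as in \eqref{lemma-g-2} via It\^o's formula for \(\|\eta_N\|_H^2\), and get tightness of the Galerkin laws in \(L^2(0,T;H)\) (Aubin--Lions type with a jump-adapted Aldous criterion) and in \(D([0,T];H^{-1})\). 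Then pass to the limit via Skorokhod and identify the martingale terms. This gives a martingale solution. Alternatively, the existence of a limit point is also supplied later by the Smoluchowski--Kramers argument itself.

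Pathwise uniqueness is the main obstacle, since \(B(\eta)\nabla\eta\) is quasilinear and the difference of two solutions does not yield a monotone estimate in \(H\). I would first try the Kirchhoff-type substitution \(\Psi(\eta)=\int_0^\eta B\), noting \(\Psi(\eta)=g^{-1}(\eta)=u\), so that \(\nabla\cdot(B(\eta)\nabla\eta)=\Delta u\). I would then estimate the difference \(\eta_1-\eta_2\) in \(H^{-1}\): the term \(\langle\Delta(u_1-u_2),(-\Delta)^{-1}(\eta_1-\eta_2)\rangle=-\langle u_1-u_2,g(u_1)-g(u_2)\rangle\le-\gamma_0\|u_1-u_2\|_H^2\) by monotonicity of \(g\). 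This dissipation controls the Lipschitz noise and drift terms, since \(\|\eta_1-\eta_2\|_{H^{-1}}\lesssim\|u_1-u_2\|_H\), with Young's inequality and BDG/Kunita for the jumps. Gronwall then gives pathwise uniqueness. Yamada--Watanabe (jump version) finally yields a unique strong solution \(\eta\), and \(u=g^{-1}(\eta)\) is the unique weak solution of \eqref{LAST}.
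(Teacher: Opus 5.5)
Your overall architecture matches the paper's. You pass to $\eta=g(u)$, obtain well-posedness of \eqref{eq:limit-eta-new}, and return to $u=g^{-1}(\eta)$ via It\^o's formula (the paper's Proposition~\ref{prop:ito-formula}), which produces the $-\frac{\gamma'}{2\gamma^3}\sigma\sigma^*$ drift and the jump sum exactly as you describe. Where you differ is the $\eta$-equation itself. The paper simply appeals to standard well-posedness results for it, as if all coefficients were Lipschitz. You instead note, correctly, that $\nabla\cdot(B(\eta)\nabla\eta)$ is quasilinear and not monotone on $H^1\subset H\subset H^{-1}$, and you supply an argument: Galerkin plus compactness for a martingale solution, pathwise uniqueness, then Yamada--Watanabe. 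The existence step and the observation $\nabla\cdot(B(\eta)\nabla\eta)=\Delta g^{-1}(\eta)=\Delta u$ are sound.

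The gap is in the uniqueness estimate as you state it. Set $\delta u=u_1-u_2$ and $\delta\eta=\eta_1-\eta_2$. It\^o's formula for $\|\delta\eta\|_{H^{-1}}^2$ gives the dissipation $-2\langle\delta u,\delta\eta\rangle_H\le-2\gamma_0\|\delta u\|_H^2$. It also produces the It\^o corrections
\[
\|\sigma(u_1)-\sigma(u_2)\|_{L_2(H^Q,H^{-1})}^2+\int_{\mathbb R^m}\|G(u_1,z)-G(u_2,z)\|_{H^{-1}}^2\,\nu(dz),
\]
which under Assumption~\ref{H2} are only bounded by $\alpha_1^{-1}(L_\sigma^2+C_G)\|\delta u\|_H^2$. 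These are quadratic terms, not cross terms, so Young's inequality cannot shrink their constant. The bound $\|\delta\eta\|_{H^{-1}}\lesssim\|\delta u\|_H$ also points the wrong way: to run Gronwall you would need them controlled by $\|\delta\eta\|_{H^{-1}}^2$. As written, your argument therefore closes only under the smallness condition $\alpha_1^{-1}(L_\sigma^2+C_G)<2\gamma_0$, or for coefficients that are Lipschitz in the $H^{-1}$ norm, as assumed in the porous-medium literature.

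The standard repair is to use the norm $\|w\|_\lambda^2:=\langle(\lambda-\Delta)^{-1}w,w\rangle_H$, which is equivalent to $\|\cdot\|_{H^{-1}}$, with $\lambda$ large. Since $\Delta(\lambda-\Delta)^{-1}=-I+\lambda(\lambda-\Delta)^{-1}$, the drift now contributes
\[
-2\langle\delta u,\delta\eta\rangle_H+2\lambda\langle(\lambda-\Delta)^{-1}\delta\eta,\delta u\rangle_H\le-\tfrac{3\gamma_0}{2}\|\delta u\|_H^2+\tfrac{2\lambda}{\gamma_0}\|\delta\eta\|_\lambda^2 .
\]
The noise corrections now carry the factor $(\lambda+\alpha_1)^{-1}$ instead of $\alpha_1^{-1}$, so they are absorbed once $(\lambda+\alpha_1)^{-1}(L_\sigma^2+C_G)\le\gamma_0/2$. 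The $\bar F$ term and the large-jump compensator are genuine cross terms and are handled by Young's inequality as you intended. Gronwall, after localization, then gives pathwise uniqueness without any smallness assumption.
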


\begin{proof}
 Let $\eta := g(u)$ and define the initial condition $\eta_0 := g(u_0) \in H^1$. 
We consider the transformed stochastic evolution equation satisfied by $\eta$
\begin{align}\label{eq:limit-eta-new}
d\eta(t) &= \nabla \cdot \left( B(\eta(t)) \nabla \eta(t) \right)\,dt 
+ \bar{F}(\eta(t))\,dt 
+ \sigma_g(\eta(t))\,dW^Q(t) \nonumber\\
&\quad + \int_{Z_1} G_g(\eta(t{-}), z)\,\tilde{{\mathcal N}}(dt,dz) 
+ \int_{\mathbb{R}^m \setminus Z_1} G_g(\eta(t{-}), z)\,{\mathcal N}(dt,dz),
\end{align}
where the coefficients are defined as
\[
B(r) := \frac{1}{\gamma(g^{-1}(r))}, \quad \bar{F}(r) := F(g^{-1}(r)), \quad \sigma_g(h) := \sigma(g^{-1} \circ h), \quad  G_g(h,z) := G(g^{-1} \circ h, z).
\]
\medskip\noindent
By assumption \textnormal{(H\ref{H2})} and the regularity of $g^{-1}$, the transformed coefficients $B$, $\bar{F}$, $\sigma_g$, and $G_g$ satisfy standard Lipschitz conditions. Moreover, $B$ is uniformly elliptic due to~\eqref{lemma-Br}. Therefore, by standard results for stochastic reaction-diffusion equations with jumps in Hilbert spaces (see e.g.,~\cite[Chapter~8]{peszat}), there exists a unique weak solution $\eta \in L^2(\Omega; C([0,T]; H^{-1}))$ to equation~\eqref{eq:limit-eta-new}.
Finally, since $g^{-1}$ is globally Lipschitz and differentiable, we apply the generalized Itô formula (cf.~\cite{peszat}, Chapter~4) to $u := g^{-1}(\eta)$. This yields that $u$ satisfies equation~(\ref{LAST}) in the weak sense. Uniqueness of $\eta$ implies uniqueness of $u$, completing the proof.
\end{proof}

\begin{remark}
The transformation $\eta = g(u)$ reduces the nonlinear damping structure of equation~(\ref{LAST}) to a uniformly parabolic form. This change of variable is crucial in establishing the existence and uniqueness of weak solutions.
\end{remark}

For each \(k\ge1\), define the real-valued process
\[
Y_k^\epsilon(t):=\langle Y^\epsilon(t),e_k\rangle_{H^{-1},H^1},\qquad Y^\epsilon(t):=\eta^\epsilon(t)+\epsilon v^\epsilon(t).
\]
Since \(e_k\in H^1\), the above quantity is well defined whenever \(Y^\epsilon(t)\in H^{-1}\). Hence \(Y_k^\epsilon(t)\) is the \(k\)-th
generalized Fourier coordinate of \(Y^\epsilon(t)\) relative to the Dirichlet eigenbasis \(\{e_k\}_{k\ge1}\).

\begin{lemma}\label{lem:conditional-variation-Yk}
Assume that Assumptions~\ref{H1}--\ref{H3} hold. Then, for every fixed \(k\ge1\),
\[
\sup_{0<\epsilon\le1}{\rm CV}_{T}(Y_k^\epsilon)<\infty .
\]
Consequently, \(\{Y_k^\epsilon\}_{0<\epsilon\le1}\) is tight in \(D([0,T];\mathbb R)\) endowed with the Meyer--Zheng topology.
\end{lemma}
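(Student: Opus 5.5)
The plan is to test the transformed identity \eqref{lemma-g-3} against the eigenfunction \(e_k\) and decompose \(Y_k^\epsilon\) as a semimartingale \(Y_k^\epsilon(t)=Y_k^\epsilon(0)+A_k^\epsilon(t)+M_k^\epsilon(t)\). The finite-variation part is
\[
A_k^\epsilon(t)=\int_0^t\Big(-\big\langle B(\eta^\epsilon(s))\nabla\eta^\epsilon(s),\nabla e_k\big\rangle_H+\big\langle \bar F(\eta^\epsilon(s)),e_k\big\rangle_H+\int_{\mathbb R^m\setminus Z_1}\big\langle G_g(\eta^\epsilon(s),z),e_k\big\rangle_H\,\nu(dz)\Big)ds .
\]
Here the large-jump integral against \(\mathcal N\) is rewritten as a compensated integral plus its compensator, which Remark~\ref{rem:G-estimates} shows is finite. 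The remaining part \(M_k^\epsilon\) collects the Wiener integral \(\int_0^t\langle\sigma_g(\eta^\epsilon)\,dW^Q,e_k\rangle_H\) and the compensated Poisson integral over all of \(\mathbb R^m\). Since \(\mathbb E\sup_t\|\eta^\epsilon\|_H^2\) is bounded uniformly by \eqref{lemma-g-2}, \(M_k^\epsilon\) is a square-integrable martingale. The divergence form of the diffusion term is interpreted weakly via integration by parts, which is legitimate because \(e_k\in H^1\).

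Next I would use the standard fact that for such a decomposition the conditional increments of the martingale vanish. This gives, for every partition,
\[
\sum_i\mathbb E\big|\mathbb E[Y_k^\epsilon(t_{i+1})-Y_k^\epsilon(t_i)\mid\mathcal F_{t_i}]\big|\le \mathbb E\int_0^T|\dot A_k^\epsilon(s)|\,ds ,
\]
and hence \({\rm CV}_T(Y_k^\epsilon)\le \mathbb E\int_0^T|\dot A_k^\epsilon|\,ds+\mathbb E|Y_k^\epsilon(T)|\). The three pieces of \(\dot A_k^\epsilon\) are bounded as follows:
\begin{itemize}
\item by \eqref{lemma-Br}, \(|\langle B(\eta^\epsilon)\nabla\eta^\epsilon,\nabla e_k\rangle_H|\le\gamma_0^{-1}\sqrt{\alpha_k}\,\|\nabla\eta^\epsilon\|_H\);
\item by Assumption~\ref{H2} and the Lipschitz property of \(g^{-1}\), \(|\langle\bar F(\eta^\epsilon),e_k\rangle_H|\le L_F(1+\gamma_0^{-1}\|\eta^\epsilon\|_H)\);
\item by Remark~\ref{rem:G-estimates}, the large-jump compensator is at most \(C(1+\|u^\epsilon\|_H)\).
\end{itemize}
Integrating in time and applying Cauchy--Schwarz together with \eqref{lemma-g-2} and Lemma~\ref{lemma1-33} gives
\[
\mathbb E\int_0^T|\dot A_k^\epsilon|\,ds\le C_k\Big(1+\big(\mathbb E\textstyle\int_0^T\|\nabla\eta^\epsilon\|_H^2\,ds\big)^{1/2}+\mathbb E\sup_t\|u^\epsilon\|_H\Big)\le C_{k,T}
\]
uniformly in \(\epsilon\in(0,1]\).

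The main obstacle is the terminal term \(\mathbb E|Y_k^\epsilon(T)|\le\mathbb E|\langle\eta^\epsilon(T),e_k\rangle_H|+\epsilon\,\mathbb E|\langle v^\epsilon(T),e_k\rangle_H|\). The first summand is bounded via \eqref{lemma-g-2}. For the second, \eqref{a-priori-v} gives \(\epsilon\,\mathbb E\|v^\epsilon(T)\|_H\le\sqrt\epsilon\,(\epsilon\,\mathbb E\|v^\epsilon(T)\|_H^2)^{1/2}\le\sqrt{\epsilon C_T}\), which is bounded and in fact vanishes as \(\epsilon\to0\). Two points need care here. First, the estimates in Lemma~\ref{lemma1-33} hold only for sufficiently small \(\epsilon\), while for \(\epsilon\) bounded away from \(0\) the bound comes from Proposition~\ref{well-pose-1}. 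Second, I must check that \(\sup_t\) inside versus outside the expectation does not matter, since only the fixed time \(T\) enters.

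Combining the two bounds yields \(\sup_{\epsilon}{\rm CV}_T(Y_k^\epsilon)<\infty\). Theorem~\ref{thm:MZ-tightness} then gives Meyer--Zheng tightness of \(\{Y_k^\epsilon\}\).
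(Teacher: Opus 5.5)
Your proposal is correct and follows essentially the same route as the paper. You test \eqref{lemma-g-3} against $e_k$ and move the large-jump compensator into the drift, so the martingale part drops out of the conditional increments. You then bound $\mathbb E\int_0^T|a_k^\epsilon(s)|\,ds$ term by term using \eqref{lemma-Br}, the linear growth of $F$ and $G$, and \eqref{lemma-g-2}, and control $\mathbb E|Y_k^\epsilon(T)|$ through \eqref{a-priori-v} before invoking Theorem~\ref{thm:MZ-tightness}. Your extra remark about covering $\epsilon$ bounded away from $0$ addresses a point the paper passes over: Lemma~\ref{lemma1-33} is stated only for sufficiently small $\epsilon$. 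It does not change the argument.
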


\begin{proof}
By \eqref{lemma-g-3}, for every fixed \(k\ge1\),
\[
\begin{aligned}
Y_k^\epsilon(t)={}&\langle g(u_0)+\epsilon v_0,e_k\rangle_H +\int_0^t\left\langle\nabla\cdot\left(B(\eta^\epsilon(s))\nabla\eta^\epsilon(s)\right),e_k
\right\rangle_{H^{-1},H^1} ds  \\
&+\int_0^t\langle \bar F(\eta^\epsilon(s)),e_k\rangle_H\,ds   +\int_0^t\langle \sigma_g(\eta^\epsilon(s))\,dW^Q(s),e_k\rangle_H  \\
&+\int_0^t\int_{Z_1}\langle G_g(\eta^\epsilon(s-),z),e_k\rangle_H\,\tilde{\mathcal N}(ds,dz)    \\
&+\int_0^t\int_{\mathbb R^m\setminus Z_1}\langle G_g(\eta^\epsilon(s-),z),e_k\rangle_H\,\mathcal N(ds,dz).
\end{aligned}
\]
Writing the large-jump integral as the sum of its compensated part and its
compensator, we obtain
\[
Y_k^\epsilon(t)=Y_k^\epsilon(0)+\int_0^t a_k^\epsilon(s)\,ds+M_k^\epsilon(t),
\]
where
\[
\begin{aligned}
a_k^\epsilon(s):={}&\left\langle\nabla\cdot\left(B(\eta^\epsilon(s))\nabla\eta^\epsilon(s)\right),e_k\right\rangle_{H^{-1},H^1} +\langle \bar F(\eta^\epsilon(s)),e_k\rangle_H   \\
&+\int_{\mathbb R^m\setminus Z_1}\langle G_g(\eta^\epsilon(s),z),e_k\rangle_H\,\nu(dz),
\end{aligned}
\]
and
\[
\begin{aligned}
M_k^\epsilon(t):={}&\int_0^t\langle \sigma_g(\eta^\epsilon(s))\,dW^Q(s),e_k\rangle_H    +\int_0^t\int_{Z_1}
\langle G_g(\eta^\epsilon(s-),z),e_k\rangle_H\, \tilde{\mathcal N}(ds,dz)                                            \\
&+\int_0^t\int_{\mathbb R^m\setminus Z_1}\langle G_g(\eta^\epsilon(s-),z),e_k\rangle_H\,\tilde{\mathcal N}(ds,dz).
\end{aligned}
\]
By Assumptions~\ref{H1}--\ref{H3} and the uniform estimate \eqref{lemma-g-2}, \(M_k^\epsilon\) is a square-integrable real-valued
martingale.
Let \(\pi: 0=t_0<t_1<\cdots<t_n=T \) be an arbitrary partition of \([0,T]\). Since \(M_k^\epsilon\) is a martingale,
\(\mathbb E\left[ M_k^\epsilon(t_{i+1})-M_k^\epsilon(t_i)\mid \mathcal F_{t_i}\right]=0 .\)
Hence
\[
\begin{aligned}
\mathbb E\sum_{i=0}^{n-1}\left|\mathbb E\left[Y_k^\epsilon(t_{i+1})-Y_k^\epsilon(t_i)\mid \mathcal F_{t_i}\right]\right|  
 &= \mathbb E\sum_{i=0}^{n-1}\left|\mathbb E\left[\int_{t_i}^{t_{i+1}} a_k^\epsilon(s)\,ds\mid \mathcal F_{t_i}\right]\right|          \\
&\quad \le \mathbb E\sum_{i=0}^{n-1}\mathbb E\left[\int_{t_i}^{t_{i+1}} |a_k^\epsilon(s)|\,ds\mid \mathcal F_{t_i}\right]   \\
&\quad = \mathbb E\int_0^T |a_k^\epsilon(s)|\,ds .
\end{aligned}
\]
It remains to estimate the three terms in \(a_k^\epsilon\). First, using the definition of the distributional divergence and
\eqref{lemma-Br}, we have
\[
\begin{aligned}
\left|\left\langle\nabla\cdot\left(B(\eta^\epsilon)\nabla\eta^\epsilon\right),e_k\right\rangle_{H^{-1},H^1}\right|
&=\left|-\int_{\mathcal O}B(\eta^\epsilon)\nabla\eta^\epsilon\cdot\nabla e_k\,dx\right|                                                
\le\frac1{\gamma_0}\|\nabla\eta^\epsilon\|_H\|\nabla e_k\|_H .\end{aligned}
\]
Therefore, by \eqref{lemma-g-2},
\[
\begin{aligned}
 \mathbb E\int_0^T\left|\left\langle\nabla\cdot\left(B(\eta^\epsilon(s))\nabla\eta^\epsilon(s)\right),e_k\right\rangle_{H^{-1},H^1}\right|ds                                          
 &  \le\frac1{\gamma_0}\|\nabla e_k\|_H\int_0^T\left(\mathbb E\|\nabla\eta^\epsilon(s)\|_H^2\right)^{1/2}ds\\
&\leq\frac{\sqrt{T}}{\gamma_0}|\nabla e_k|_H\left(\mathbb E\int_0^T|\nabla\eta^\epsilon(s)|_H^2,ds\right)^{1/2} \le C_{T,k}.
\end{aligned}
\]
Second, by Assumptions~\ref{H2}--\ref{H3} and the Lipschitz continuity of \(g^{-1}\), \(\|\bar F(\eta)\|_H=\|F(g^{-1}(\eta))\|_H\le C(1+\|\eta\|_H).\)
Thus, using again \eqref{lemma-g-2},
\[
\begin{aligned}
\mathbb E\int_0^T |\langle \bar F(\eta^\epsilon(s)),e_k\rangle_H|\,ds\le\int_0^T\mathbb E\|\bar F(\eta^\epsilon(s))\|_H\,ds        
\le C\int_0^T\left(1+\mathbb E\|\eta^\epsilon(s)\|_H\right)ds \le C_T .
\end{aligned}
\]
Third, by Assumptions~\ref{H1}--\ref{H3},  the Lipschitz continuity of \(g^{-1}\) and  \eqref{lemma-g-2},
\[
\begin{aligned}
\mathbb E\int_0^T \int_{\mathbb R^m\setminus Z_1}|\langle G_g(\eta^\epsilon(s),z),e_k\rangle_H|\,\nu(dz)\,ds                                          
&\le\mathbb E\int_0^T\int_{\mathbb R^m\setminus Z_1}\|G_g(\eta^\epsilon(s),z)\|_H\,\nu(dz)\,ds              \\
&\le C\int_0^T\left(1+\mathbb E\|\eta^\epsilon(s)\|_H\right)ds \le C_T .
\end{aligned}
\]
Combining the preceding estimates gives
\[
\sup_{\pi}\mathbb E\sum_{i=0}^{n-1}\left|\mathbb E\left[ Y_k^\epsilon(t_{i+1})-Y_k^\epsilon(t_i)\mid \mathcal F_{t_i}\right]\right|\le C_{T,k},
\]
where \(C_{T,k}\) is independent of \(0<\epsilon\le1\).

Since \(Y_k^\epsilon(T)=\langle \eta^\epsilon(T),e_k\rangle_H+\epsilon\langle v^\epsilon(T),e_k\rangle_H ,\)
and \(\|e_k\|_H=1\),  by \eqref{lemma-g-2} and \eqref{a-priori-v}, we obtain
\[
\begin{aligned}
\mathbb E|Y_k^\epsilon(T)| \le \mathbb E\|\eta^\epsilon(T)\|_H+ \epsilon\,\mathbb E\|v^\epsilon(T)\|_H            
&\le C_T+\epsilon\left(\mathbb E\|v^\epsilon(T)\|_H^2\right)^{1/2}               \\
&\le C_T+C_T\sqrt{\epsilon}\le C_T , \qquad 0<\epsilon\le1 .
\end{aligned}
\]
Therefore,
\[
\sup_{0<\epsilon\le1}{\rm CV}_{T}(Y_k^\epsilon) \le C_{T,k}<\infty .
\]
By Theorem~\ref{thm:MZ-tightness}, the family \(\{Y_k^\epsilon\}_{0<\epsilon\le1}\) is tight in  \(D([0,T];\mathbb R)\) endowed with the Meyer--Zheng topology.
The proof is complete.
\end{proof}

\begin{lemma}\label{thm:main3}
Assume that Assumptions~\ref{H1}--\ref{H3} hold. Let \((u^\epsilon,v^\epsilon)\) be the solution to~\eqref{e:SLE-11} with
initial condition \((u_0,v_0)\in H^1\times H .\) Then, for every fixed \(T>0\), the families of laws
\(\left\{\mathcal L(Y^\epsilon)\right\}_{0<\epsilon\le1} ~\text{and}~ \left\{\mathcal L(\eta^\epsilon)\right\}_{0<\epsilon\le1}\)
are tight in \(D([0,T];H^{-1})\) endowed with the Meyer--Zheng topology.
\end{lemma}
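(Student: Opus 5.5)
The plan is to apply Lemma~\ref{lem:MZ-projection-tightness} to \(Y^\epsilon=\eta^\epsilon+\epsilon v^\epsilon\), and then pass to \(\eta^\epsilon\) by showing that \(\epsilon v^\epsilon\) is negligible in the Meyer--Zheng metric.

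The first hypothesis of Lemma~\ref{lem:MZ-projection-tightness} is exactly Lemma~\ref{lem:conditional-variation-Yk}: \(\sup_{0<\epsilon\le1}{\rm CV}_T(Y_k^\epsilon)<\infty\) for every \(k\).

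For the tail condition I use the spectral representation of the \(H^{-1}\) norm. For \(w\in H\),
\[
\|(I-P_N)w\|_{H^{-1}}^2=\sum_{k>N}\alpha_k^{-1}|w_k|^2\le \alpha_{N+1}^{-1}\|w\|_H^2 .
\]
Applying this to \(Y^\epsilon\) gives
\[
\mathbb E\int_0^T\|(I-P_N)Y^\epsilon(t)\|_{H^{-1}}dt\le \alpha_{N+1}^{-1/2}\int_0^T\bigl(\mathbb E\|\eta^\epsilon(t)\|_H+\epsilon\,\mathbb E\|v^\epsilon(t)\|_H\bigr)dt .
\]
By \eqref{lemma-g-2}, \(\mathbb E\|\eta^\epsilon(t)\|_H\le \gamma_1C_T^{1/2}\). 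By \eqref{a-priori-v}, \(\epsilon\,\mathbb E\|v^\epsilon(t)\|_H\le \sqrt\epsilon\,(\epsilon\,\mathbb E\|v^\epsilon(t)\|_H^2)^{1/2}\le C_T^{1/2}\), uniformly in \(t\in[0,T]\) and \(\epsilon\le1\). The right-hand side is therefore at most \(C_T\alpha_{N+1}^{-1/2}\), which tends to \(0\) as \(N\to\infty\) because \(\alpha_k\to\infty\). Markov's inequality then gives the tail condition uniformly in \(\epsilon\). Lemma~\ref{lem:MZ-projection-tightness} (applied along any sequence \(\epsilon_n\to0\)) yields tightness of \(\{\mathcal L(Y^\epsilon)\}\) in \(D([0,T];H^{-1})_{\rm MZ}\).

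To pass to \(\eta^\epsilon=Y^\epsilon-\epsilon v^\epsilon\), I show that \(d_{\rm MZ}(Y^\epsilon,\eta^\epsilon)\to0\) in probability. By Poincaré,
\[
\mathbb E\, d_{\rm MZ}(Y^\epsilon,\eta^\epsilon)\le \mathbb E\int_0^T\epsilon\|v^\epsilon(t)\|_{H^{-1}}dt\le \alpha_1^{-1/2}\sqrt\epsilon\int_0^T\bigl(\epsilon\,\mathbb E\|v^\epsilon(t)\|_H^2\bigr)^{1/2}dt\le C_T\sqrt\epsilon .
\]
Since \(d_{\rm MZ}\) metrizes the topology of convergence in measure, tightness transfers. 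Given \(\delta>0\), take a compact \(K\) with \(\mathbb P(Y^\epsilon\notin K)<\delta\). For any \(\rho>0\), the closed \(\rho\)-neighbourhood \(K_\rho\) of \(K\) carries \(\eta^\epsilon\) with probability at least \(1-2\delta\) once \(\epsilon\) is small. This shows that every sequence \(\epsilon_n\to0\) has the same limit points for \(\eta^{\epsilon_n}\) as for \(Y^{\epsilon_n}\). For the finitely many \(\epsilon\) bounded away from zero, single laws are tight.

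A cleaner alternative is to run the same two-hypothesis argument directly for \(\eta^\epsilon\). The tail bound is identical. For the coordinate condition, \(\langle\eta^\epsilon,e_k\rangle\) has absolutely continuous paths with derivative \(\langle \epsilon v^\epsilon,e_k\rangle\). I will use the Meyer--Zheng tightness for \(Y^\epsilon_k\) together with the \(O(\sqrt\epsilon)\) perturbation in \(L^1(\Omega\times[0,T])\), since a direct conditional-variation bound for \(\eta_k^\epsilon\) would require controlling \(\mathbb E\int_0^T|\langle v^\epsilon,e_k\rangle|\,dt\), which is only \(O(\epsilon^{-1/2})\).

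The main obstacle is this transfer step, specifically making rigorous that a perturbation vanishing in probability in \(d_{\rm MZ}\) preserves tightness in the (non-Polish but metrizable-on-compacts) Meyer--Zheng space. I will handle it with the \(\rho\)-neighbourhood argument: in the pseudo-path topology, relatively compact sets are characterized by tightness of the associated measures \(\mu_x\). Enlarging a relatively compact family of occupation measures by a \(d_{\rm MZ}\)-small perturbation keeps it relatively compact, since the Lévy–Prokhorov distance between \(\mu_x\) and \(\mu_y\) is controlled by \(d_{\rm MZ}(x,y)\).
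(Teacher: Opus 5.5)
Your overall plan is the paper's proof. You verify the two hypotheses of Lemma~\ref{lem:MZ-projection-tightness} for $Y^\epsilon$: the coordinate condition comes from Lemma~\ref{lem:conditional-variation-Yk} and the tail from the spectral bound. You then pass to $\eta^\epsilon$ using $\mathbb E\, d_{\rm MZ}(Y^\epsilon,\eta^\epsilon)\le C_T\sqrt\epsilon$. The only difference in the first half is harmless. The paper controls $(I-P_N)\eta^\epsilon$ through the $L^2(0,T;H^1)$ part of \eqref{lemma-g-2} and gets the rate $\alpha_{N+1}^{-2}$. You use only the $\sup_t$ $H$-bound and get $\alpha_{N+1}^{-1/2}$, which suffices and needs less.

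The justification you give in your last paragraph for the transfer step is incorrect and should be replaced.
\begin{itemize}
\item Relative compactness in the Meyer--Zheng topology is \emph{not} characterized by tightness of the occupation measures $\mu_x$. Take the c\`adl\`ag paths $x_n(t)=(-1)^{\lfloor 2^n t/T\rfloor}e_1$. All the $\mu_{x_n}$ are carried by $[0,T]\times\{\pm e_1\}$. Yet, since these are Rademacher functions, $d_{\rm MZ}(x_n,x_m)=\frac T2\bigl(1\wedge 2\|e_1\|_{H^{-1}}\bigr)$ for $n\neq m$, so no subsequence converges. The weak limit of $\mu_{x_n}$ is not the occupation measure of any path.
\item Scaling these paths by a small constant shows that, for fixed $\rho>0$, the closed $\rho$-neighbourhood of the compact set $\{0\}$ is not relatively compact. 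So ``$\eta^\epsilon\in K_\rho$ with high probability'' does not produce a compact set.
\end{itemize}

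What works is the paper's Slutsky step. Given $\epsilon_n\to0$, extract a subsequence with $Y^{\epsilon_n}\Rightarrow Y$; the direct half of Prokhorov's theorem holds in any metric space. Then apply the converging-together lemma to get $\eta^{\epsilon_n}\Rightarrow Y$. Its proof uses $\rho$-neighbourhoods $F_\rho$ of arbitrary \emph{closed} sets $F$ together with the portmanteau theorem, letting $\rho\downarrow0$, not neighbourhoods of a compact set. This gives relative compactness of $\{\mathcal L(\eta^\epsilon)\}$ along $\epsilon\to0$, which is what the paper actually proves and what Theorem~\ref{thm:main4} uses. For the same reason, your phrase ``finitely many $\epsilon$ bounded away from zero'' only makes sense along such a sequence, since $(0,1]$ is uncountable.

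Finally, $\partial_t\eta^\epsilon=\gamma(u^\epsilon)v^\epsilon$, not $\epsilon v^\epsilon$; the paper's display is a typo. Your $O(\epsilon^{-1/2})$ remark is consistent with the correct derivative. This is exactly why one works with $Y^\epsilon$ rather than with $\eta^\epsilon$ directly.
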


\begin{proof}
We first prove the tightness of \(\{Y^\epsilon\}_{0<\epsilon\le1}\) in \(D([0,T];H^{-1})\) endowed with the Meyer--Zheng topology.
For each fixed \(k\ge1\), by Lemma~\ref{lem:conditional-variation-Yk}, \(\sup_{0<\epsilon\le1}{\rm CV}_{T}(Y_k^\epsilon)<\infty .\)
Hence the coordinate condition in Lemma~\ref{lem:MZ-projection-tightness} is satisfied

Since \(Y^\epsilon=\eta^\epsilon+\epsilon v^\epsilon,\) we have \((I-P_N)Y^\epsilon=(I-P_N)\eta^\epsilon+(I-P_N)(\epsilon v^\epsilon).\)
Hence,
\[
\|(I-P_N)Y^\epsilon(t)\|_{H^{-1}}^2 \le 2\|(I-P_N)\eta^\epsilon(t)\|_{H^{-1}}^2 + 2\|(I-P_N)(\epsilon v^\epsilon(t))\|_{H^{-1}}^2 .
\]
Firstly,  by \(\eta^\epsilon(t)=\sum_{k=1}^\infty \eta_k^\epsilon(t)e_k,\)
then
\(
\|(I-P_N)\eta^\epsilon(t)\|_{H^{-1}}^2 = \sum_{k=N+1}^\infty \alpha_k^{-1}|\eta_k^\epsilon(t)|^2 .
\)
For \(k\ge N+1\), since \(\alpha_k\ge \alpha_{N+1}\), \(\alpha_k^{-1} \le \alpha_{N+1}^{-2}\alpha_k . \) Hence
\[
\begin{aligned}
\|(I-P_N)\eta^\epsilon(t)\|_{H^{-1}}^2 \le\alpha_{N+1}^{-2}\sum_{k=N+1}^\infty\alpha_k|\eta_k^\epsilon(t)|^2 
 \le \alpha_{N+1}^{-2}\|\eta^\epsilon(t)\|_{H^1}^2 .
\end{aligned}
\]
Consequently, by \eqref{lemma-g-2},
\[
\begin{aligned}
\mathbb E\int_0^T \|(I-P_N)\eta^\epsilon(t)\|_{H^{-1}}^2\,dt
 \le \alpha_{N+1}^{-2}\mathbb E\int_0^T\|\eta^\epsilon(t)\|_{H^1}^2\,dt  \le C_T\alpha_{N+1}^{-2}.
\end{aligned}
\]
Next, by \(v^\epsilon(t)=\sum_{k=1}^\infty v_k^\epsilon(t)e_k,\) then,
\(
\|(I-P_N)(\epsilon v^\epsilon(t))\|_{H^{-1}}^2=\sum_{k=N+1}^\infty\alpha_k^{-1}\epsilon^2|v_k^\epsilon(t)|^2 .
\)
Since \(\alpha_k^{-1}\le \alpha_{N+1}^{-1}\) for \(k\ge N+1\), we obtain
\[
\begin{aligned}
\|(I-P_N)(\epsilon v^\epsilon(t))\|_{H^{-1}}^2 \le\alpha_{N+1}^{-1}\epsilon^2\sum_{k=N+1}^\infty |v_k^\epsilon(t)|^2    
 \le \alpha_{N+1}^{-1}\epsilon^2\|v^\epsilon(t)\|_H^2 .
\end{aligned}
\]
Using \eqref{a-priori-v}, we have
\[
\begin{aligned}
\mathbb E\int_0^T
\|(I-P_N)(\epsilon v^\epsilon(t))\|_{H^{-1}}^2\,dt
&\le
\alpha_{N+1}^{-1}
\epsilon^2
\int_0^T
\mathbb E\|v^\epsilon(t)\|_H^2\,dt        \\
&\le
C_T\alpha_{N+1}^{-1}\epsilon    \le
C_T\alpha_{N+1}^{-1},
\qquad 0<\epsilon\le1 .
\end{aligned}
\]
Combining the preceding estimates yields
\[
\mathbb E\int_0^T
\|(I-P_N)Y^\epsilon(t)\|_{H^{-1}}^2\,dt
\le
C_T\alpha_{N+1}^{-2}
+
C_T\alpha_{N+1}^{-1}.
\]
Since \(\alpha_{N+1}\to\infty\), it follows that
\[
\lim_{N\to\infty}
\sup_{0<\epsilon\le1}
\mathbb E\int_0^T
\|(I-P_N)Y^\epsilon(t)\|_{H^{-1}}^2\,dt
=0.
\]
In particular, for every \(\delta>0\),
\[
\begin{aligned}
&\sup_{0<\epsilon\le1}
\mathbb P
\left(
\int_0^T
\|(I-P_N)Y^\epsilon(t)\|_{H^{-1}}\,dt>\delta
\right)                                                     \\
&\quad\le
\frac{T^{1/2}}{\delta}
\left(
\sup_{0<\epsilon\le1}
\mathbb E\int_0^T
\|(I-P_N)Y^\epsilon(t)\|_{H^{-1}}^2\,dt
\right)^{1/2}
\longrightarrow 0,~~ as ~N\to\infty. 
\end{aligned}
\]
Thus the  tail condition in
Lemma~\ref{lem:MZ-projection-tightness} holds. Therefore, \(\{Y^\epsilon\}_{0<\epsilon\le1}\)
is tight in \(D([0,T];H^{-1})\) endowed with the Meyer--Zheng topology.

It remains to prove the tightness of
\(\eta^\epsilon\). By definition, \(Y^\epsilon(t)-\eta^\epsilon(t)=\epsilon v^\epsilon(t).\)
Using the continuous embedding \(H\subset H^{-1}\) and \eqref{a-priori-v}, we obtain
\[
\begin{aligned}
\mathbb E\int_0^T\|Y^\epsilon(t)-\eta^\epsilon(t)\|_{H^{-1}}^2\,dt
=\mathbb E\int_0^T\|\epsilon v^\epsilon(t)\|_{H^{-1}}^2\,dt                  
\le C\epsilon^2\int_0^T\mathbb E\|v^\epsilon(t)\|_H^2\,dt                
\le C_T\epsilon .
\end{aligned}
\]
Hence \(Y^\epsilon-\eta^\epsilon \longrightarrow 0 ~ \text{in }L^2(\Omega\times[0,T];H^{-1}).\)
Moreover,
\[
d_{\rm MZ}(Y^\epsilon,\eta^\epsilon) \le \int_0^T \|Y^\epsilon(t)-\eta^\epsilon(t)\|_{H^{-1}}\,dt .
\]
Therefore,
\[
d_{\rm MZ}(Y^\epsilon,\eta^\epsilon) \longrightarrow 0 \qquad \text{in probability}.
\]
Since \(\{Y^\epsilon\}_{0<\epsilon\le1}\) is tight in \(D([0,T];H^{-1})\) endowed with the Meyer--Zheng topology, for every
sequence \(\epsilon_n\to0\), there exists a subsequence, still denoted by \(\epsilon_n\), such that \(Y^{\epsilon_n}\Longrightarrow Y\)
in \(D([0,T];H^{-1})\) endowed with the Meyer--Zheng topology. Since
\[
d_{\rm MZ}(Y^{\epsilon_n},\eta^{\epsilon_n})\longrightarrow 0\qquad \text{in probability},
\]
it follows from Slutsky's theorem that \(\eta^{\epsilon_n}\Longrightarrow Y\) in \(D([0,T];H^{-1})\) endowed with the Meyer--Zheng topology. 
Hence every sequence \(\{\eta^{\epsilon_n}\}_{n\ge1}\) admits a weakly convergent subsequence. Therefore, \(\{\eta^\epsilon\}_{0<\epsilon\le1}\) 
is tight in \(D([0,T];H^{-1})\) endowed with the Meyer--Zheng topology.

\end{proof}

\begin{lemma}\label{lem:MZ-H-strong}
Assume that
\[
\sup_{n\ge1}\mathbb E\sup_{t\in[0,T]}\|\eta^{\epsilon_n}(t)\|_H^2+\sup_{n\ge1}\mathbb E\int_0^T\|\eta^{\epsilon_n}(t)\|_{H^1}^2\,dt
\le C_T .
\]
Assume moreover that \(d_{\rm MZ}(\eta^{\epsilon_n},\eta)\to0~ \text{a.s.}\)
where the Meyer--Zheng topology is taken on paths with values in
\(H^{-1}\). Then \(\eta\in L^2(\Omega;L^\infty(0,T;H))\cap L^2(\Omega\times(0,T);H^1),\)
and
\[
\eta^{\epsilon_n}\to\eta\qquad\text{strongly in }L^2(0,T;H), \quad \text{in probability}.
\]
\end{lemma}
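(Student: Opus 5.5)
The plan has two parts. First, lower semicontinuity gives the regularity of the limit. Second, an interpolation between \(H^{-1}\) and \(H^1\) upgrades the Meyer--Zheng convergence in \(H^{-1}\) to strong convergence in \(L^2(0,T;H)\).

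\textbf{Regularity of \(\eta\).} Convergence in \(d_{\rm MZ}\) means convergence in Lebesgue measure on \([0,T]\) with values in \(H^{-1}\). So for a.e.\ \(\omega\), every subsequence has a further subsequence with \(\eta^{\epsilon_n}(t)\to\eta(t)\) in \(H^{-1}\) for a.e.\ \(t\). The norms \(\|\cdot\|_H\) and \(\|\cdot\|_{H^1}\) are lower semicontinuous with respect to \(H^{-1}\)-convergence; for instance \(\|x\|_{H^1}^2=\sum_k\alpha_k|x_k|^2\) and each coordinate \(x_k\) is continuous in \(H^{-1}\). Fatou's lemma therefore gives, for a.e.\ \(t\),
\[
\|\eta(t)\|_H\le\liminf_n\sup_s\|\eta^{\epsilon_n}(s)\|_H,\qquad
\int_0^T\|\eta\|_{H^1}^2dt\le\liminf_n\int_0^T\|\eta^{\epsilon_n}\|_{H^1}^2dt .
\]
Taking expectations and applying Fatou once more (in \(\omega\)) with the assumed uniform bound yields \(\eta\in L^2(\Omega;L^\infty(0,T;H))\cap L^2(\Omega\times(0,T);H^1)\). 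Because \(\eta\) is càdlàg in \(H^{-1}\) and bounded in \(H\) for a.e.\ \(t\), the essential supremum is the natural object here.

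\textbf{Strong convergence.} Put \(w_n:=\eta^{\epsilon_n}-\eta\). The interpolation inequality \(\|x\|_H^2\le\|x\|_{H^{-1}}\|x\|_{H^1}\) (Cauchy--Schwarz in Fourier coefficients) gives
\[
\int_0^T\|w_n\|_H^2dt\le\Big(\int_0^T\|w_n\|_{H^{-1}}^2dt\Big)^{1/2}\Big(\int_0^T\|w_n\|_{H^1}^2dt\Big)^{1/2}.
\]
The second factor is bounded in probability by Chebyshev and the uniform bounds. It therefore suffices to show \(\int_0^T\|w_n\|_{H^{-1}}^2dt\to0\) in probability.

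\textbf{Main obstacle.} MZ convergence controls only \(\int(1\wedge\|w_n\|_{H^{-1}})dt\), so truncation has to be removed, and this needs uniform integrability in \(t\), pathwise. Fix \(\omega\) and \(M>0\). On the event \(\{\sup_t\|\eta^{\epsilon_n}\|_H\le M\}\) one has \(\|w_n(t)\|_{H^{-1}}\le C(M+\|\eta(t)\|_H)\). Split \([0,T]\) into the set where \(\|w_n\|_{H^{-1}}\le\delta\), which contributes at most \(T\delta^2\), and its complement. The complement has Lebesgue measure tending to \(0\) a.s., and on it the integrand is dominated by \(C(M^2+\|\eta\|_{L^\infty H}^2)\), so it contributes at most \(C(M^2+\|\eta\|_{L^\infty H}^2)\) times the measure of the complement. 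Hence
\[
\limsup_n\int_0^T\|w_n\|_{H^{-1}}^2dt\le T\delta^2 \quad\text{a.s. on that event},
\]
and \(\delta\) is arbitrary. Finally, \(\mathbb P(\sup_t\|\eta^{\epsilon_n}\|_H>M)\le C_T/M^2\) uniformly in \(n\), so letting \(M\to\infty\) yields convergence in probability. Combined with the interpolation bound, this gives \(\eta^{\epsilon_n}\to\eta\) in \(L^2(0,T;H)\) in probability.
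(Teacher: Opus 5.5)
Your proof is correct, and its core matches the paper's. The $L^\infty(0,T;H)$ bound comes from lower semicontinuity of $\|\cdot\|_H$ under $H^{-1}$-convergence plus Fatou. The strong convergence comes from first upgrading Meyer--Zheng convergence to $\int_0^T\|\eta^{\epsilon_n}-\eta\|_{H^{-1}}^2\,dt\to0$ in probability, then applying $\|f\|_H^2\le\|f\|_{H^{-1}}\|f\|_{H^1}$ with Cauchy--Schwarz in time.

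The real difference is how you get the $H^1$-regularity of $\eta$. The paper extracts a weakly convergent subsequence in $L^2(\Omega\times(0,T);H^1)$ and identifies its limit with $\eta$ via convergence in measure. You instead use pathwise lower semicontinuity of $\|x\|_{H^1}^2=\sum_k\alpha_k|x_k|^2$ with respect to $H^{-1}$-convergence and apply Fatou twice. Your route is more elementary and skips the identification of weak limits. The paper's route additionally gives $\eta^{\epsilon_n}\rightharpoonup\eta$ weakly in $L^2(\Omega\times(0,T);H^1)$. That weak convergence is exactly what Step~3 of the proof of Theorem~\ref{thm:main4} uses for the elliptic term; with your version it would have to be recovered there, e.g.\ from the uniform bound plus the strong $L^2(0,T;H)$ convergence. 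You also write out explicitly the ``standard truncation argument'' that the paper only names.

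Two small points of phrasing need tidying.
\begin{itemize}
\item The event $\{\sup_t\|\eta^{\epsilon_n}(t)\|_H\le M\}$ depends on $n$, so ``$\limsup_n\le T\delta^2$ a.s.\ on that event'' is not quite meaningful. Instead, note that on this event $\int_0^T\|w_n\|_{H^{-1}}^2dt\le Z_n:=T\delta^2+C(M^2+\|\eta\|_{L^\infty(0,T;H)}^2)\,|\{t:\|w_n(t)\|_{H^{-1}}>\delta\}|$, where $Z_n\to T\delta^2$ a.s. It then follows that $\mathbb P(\int_0^T\|w_n\|_{H^{-1}}^2dt>\kappa)\le C_T/M^2+\mathbb P(Z_n>\kappa)$.
\item In the regularity step, you want bounds with $\liminf_n$ over the full sequence. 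For fixed $\omega$, first pass to a subsequence realizing the $\liminf$, and only then to one converging for a.e.\ $t$.
\end{itemize}
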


\begin{proof}
The uniform estimate implies that \(\{\eta^{\epsilon_n}\}_{n\ge1}\) is bounded in \(L^2(\Omega\times(0,T);H^1)\). Hence, up to a subsequence,
\(\eta^{\epsilon_n}\rightharpoonup \bar\eta ~ \text{weakly in }L^2(\Omega\times(0,T);H^1).\)
Since \(H^1\hookrightarrow H^{-1}\) continuously, the same convergence also holds weakly in \(L^2(\Omega\times(0,T);H^{-1})\).

On the other hand, the almost sure Meyer--Zheng convergence in \(H^{-1}\) implies
\[
\eta^{\epsilon_n}\to \eta \qquad\text{in measure on }\Omega\times(0,T)\text{ with values in }H^{-1}.
\]
Therefore, by the standard identification of weak limits under convergence in measure, we have \(\bar\eta=\eta\). Consequently,
\[
\eta\in L^2(\Omega\times(0,T);H^1), \qquad\eta^{\epsilon_n}\rightharpoonup\eta\quad\text{weakly in }L^2(\Omega\times(0,T);H^1).
\]
We next show that the \(L^\infty(0,T;H)\)-bound is inherited by the limit. For a.e. \(\omega\), the convergence in measure on \([0,T]\) allows us to
extract a subsequence such that \(\eta^{\epsilon_n}(t,\omega)\to\eta(t,\omega)~\text{in }H^{-1}\) for a.e. \(t\in[0,T]\). By the lower semicontinuity of the \(H\)-norm under
strong convergence in \(H^{-1}\) together with boundedness in \(H\),
\[
\operatorname*{ess\,sup}_{t\in[0,T]}\|\eta(t)\|_H^2 \le\liminf_{n\to\infty}\sup_{t\in[0,T]}\|\eta^{\epsilon_n}(t)\|_H^2\qquad \text{a.s.}
\]
Thus, by Fatou's lemma, \(\mathbb E\operatorname*{ess\,sup}_{t\in[0,T]}\|\eta(t)\|_H^2\le C_T .\)
Hence \(\eta\in L^2(\Omega;L^\infty(0,T;H)).\)

It remains to prove the strong convergence. Since
\[
\eta^{\epsilon_n}\to\eta \qquad \text{in measure on }\Omega\times(0,T)\text{ with values in }H^{-1},
\]
and both \(\eta^{\epsilon_n}\) and \(\eta\) are uniformly bounded in \(L^2(\Omega;L^\infty(0,T;H))\), a standard truncation argument gives
\[
\int_0^T \|\eta^{\epsilon_n}(t)-\eta(t)\|_{H^{-1}}^2\,dt\to0 \qquad \text{in probability}.
\]
Finally, by the interpolation inequality \(\|f\|_H^2\le\|f\|_{H^{-1}}\|f\|_{H^1},~ f\in H^1,\) we obtain
\[
\begin{aligned}
\int_0^T\|\eta^{\epsilon_n}(t)-\eta(t)\|_H^2\,dt\le\left(\int_0^T\|\eta^{\epsilon_n}(t)-\eta(t)\|_{H^{-1}}^2\,dt\right)^{1/2}  
\left(\int_0^T\|\eta^{\epsilon_n}(t)-\eta(t)\|_{H^1}^2\,dt\right)^{1/2}.
\end{aligned}
\]
The first factor converges to zero in probability, while the second one is bounded in probability. Therefore
\[
\int_0^T\|\eta^{\epsilon_n}(t)-\eta(t)\|_H^2\,dt\to0 \qquad \text{in probability}.
\]
This proves
\[
\eta^{\epsilon_n}\to\eta \qquad\text{strongly in }L^2(0,T;H),\quad \text{in probability}.
\]
\end{proof}

\section{Smoluchowski--Kramers approximation}\label{prf}

 \subsection{Identification of the limiting equation}\label{sec:limit-eq}

In this subsection, we analyze the asymptotic behavior of the stochastic system~\eqref{e:SLE-11} as \(\epsilon\to0\), and identify the limiting
dynamics satisfied by the weak limit of the transformed variables \(\eta^\epsilon:=g(u^\epsilon).\)
After inverting the nonlinear transformation, the corresponding limit
\(u:=g^{-1}(\eta)\) satisfies the first-order stochastic evolution equation~\eqref{LAST} in the weak sense.

\begin{theorem}\label{thm:main4}
Assume that Assumptions~\ref{H1}--\ref{H3} hold, and let
\((u^\epsilon,v^\epsilon)\) be the solution to~\eqref{e:SLE-11}.
Let \(\eta\) denote the unique weak solution to the transformed limiting
equation~\eqref{eq:limit-eta-new} with initial condition
\(\eta(0)=g(u_0)\)  in Proposition~\ref{wellpose--limit-1}. Then
\[
\eta^\epsilon\Longrightarrow\eta
\qquad\text{in }D([0,T];H^{-1})_{\rm MZ}.
\]
Consequently,
\[
u^\epsilon=g^{-1}(\eta^\epsilon)
\Longrightarrow
u:=g^{-1}(\eta)
\qquad\text{in }D([0,T];H^{-1})_{\rm MZ}.
\]
Moreover, \(u\) is the unique weak solution to the limiting
equation~\eqref{LAST}.
\end{theorem}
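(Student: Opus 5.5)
The argument follows the standard tightness–identification–uniqueness scheme.

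\textbf{Step 1: extraction and Skorokhod representation.} By Lemma~\ref{thm:main3}, $\{\mathcal L(\eta^\epsilon)\}$ is tight in $D([0,T];H^{-1})_{\rm MZ}$. Fix a sequence $\epsilon_n\to0$ and pass to a subsequence. I would consider the joint laws of
\[
\bigl(\eta^{\epsilon_n},\,Y^{\epsilon_n},\,W^Q,\,\mathcal N\bigr),
\]
which are tight on the product space. The Wiener part is taken in $C([0,T];U)$ for a Hilbert space $U\supset H^Q$ with Hilbert--Schmidt embedding. The Poisson random measure is taken in the space of integer-valued measures on $[0,T]\times\mathbb R^m$ with the vague topology. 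Since the Meyer--Zheng space is separable metrizable but not Polish, I would use Jakubowski's version of the Skorokhod theorem. This gives a new probability space carrying copies $\tilde\eta^n\to\tilde\eta$ with $d_{\rm MZ}\to0$ a.s., $\tilde W^n\to\tilde W$ and $\tilde{\mathcal N}^n\to\tilde{\mathcal N}$ a.s. The limit $\tilde W$ is a $Q$-Wiener process and $\tilde{\mathcal N}$ is a Poisson random measure with intensity $\nu\,dt$. Both are shown to be such with respect to the filtration generated by $(\tilde\eta,\tilde W,\tilde{\mathcal N})$, by checking independence of increments from the past through bounded continuous test functionals. The uniform bound \eqref{lemma-g-2} transfers to the copies. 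Lemma~\ref{lem:MZ-H-strong} then gives $\tilde\eta\in L^2(\Omega;L^\infty(0,T;H))\cap L^2(\Omega\times(0,T);H^1)$ and $\tilde\eta^n\to\tilde\eta$ strongly in $L^2(0,T;H)$ in probability.

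\textbf{Step 2: passing to the limit in \eqref{lemma-g-3}.} Test \eqref{lemma-g-3} against $e_k$, or any $\psi\in C_0^\infty(\mathcal O)$, and integrate in time against a bounded function $\phi(t)$. This is natural because MZ convergence only controls $dt$-a.e.\ time behaviour, so the identity is recovered for a.e.\ $t$ and then for every $t$ by right continuity. The terms are handled as follows.

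\begin{itemize}
\item The term $\epsilon v^\epsilon$ vanishes in $L^2(\Omega\times(0,T);H^{-1})$, as in the proof of Lemma~\ref{thm:main3}.
\item For the drift $\bar F(\eta^n)$, strong $L^2(0,T;H)$ convergence together with the Lipschitz property suffices.
\item For the quasilinear term, write
\[
\int B(\eta^n)\nabla\eta^n\cdot\nabla\psi=\int\nabla\eta^n\cdot B(\eta^n)\nabla\psi .
\]
Here $B(\eta^n)\nabla\psi\to B(\eta)\nabla\psi$ strongly in $L^2$, by boundedness and continuity of $B$ and dominated convergence along a.e.-convergent subsequences. Meanwhile $\nabla\eta^n\rightharpoonup\nabla\eta$ weakly.
\item The Wiener integrals converge in probability by the standard lemma on convergence of stochastic integrals, since $\sigma_g(\eta^n)\to\sigma_g(\eta)$ in $L^2(0,T;L_2)$ and $W^n\to W$.
\item The compensated jump integrals are treated the same way, using the $L^2(\nu)$-Lipschitz bound of Remark~\ref{rem:G-estimates}. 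The large-jump integrals are handled analogously: finitely many jumps and a.s.\ convergence of the atoms.
\end{itemize}

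This shows that $\tilde\eta$ is a weak solution of \eqref{eq:limit-eta-new} driven by $(\tilde W,\tilde{\mathcal N})$.

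\textbf{Step 3: uniqueness and conclusion.} Pathwise uniqueness from Proposition~\ref{wellpose--limit-1}, combined with Yamada--Watanabe (or simply uniqueness in law), identifies the law of $\tilde\eta$ as that of $\eta$. Every subsequence therefore has the same limit, so $\eta^\epsilon\Rightarrow\eta$. Since $g^{-1}$ is globally Lipschitz on $\mathbb R$, the Nemytskii map $h\mapsto g^{-1}(h)$ is Lipschitz on $H$. Combined with the $L^2(0,T;H)$ strong convergence, this gives $d_{\rm MZ}(g^{-1}(\tilde\eta^n),g^{-1}(\tilde\eta))\to0$ in probability, measured in $H$ and hence in $H^{-1}$. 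Thus $u^\epsilon\Rightarrow u=g^{-1}(\eta)$. That $u$ solves \eqref{LAST} uniquely is exactly the content of the change of variables in Proposition~\ref{wellpose--limit-1}, via It\^o's formula for $g^{-1}$ with jumps. The second-order term of It\^o's formula produces $-\gamma'/(2\gamma^3)\,\sigma\sigma^*$. The jump term $g^{-1}(\eta(s))-g^{-1}(\eta(s-))-(g^{-1})'(\eta(s-))\Delta\eta(s)$ produces the sum in \eqref{LAST}, after writing $\Delta\eta(s)=\int_{u(s-)}^{u(s)}\gamma$.

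\textbf{Main obstacle.} I expect the hardest step to be the passage to the limit in the stochastic integrals under merely MZ (in-measure) convergence of the integrands, together with the Skorokhod construction on a non-Polish space. The limit must be shown to be a semimartingale solution with respect to a suitable filtration, and the Poisson integrals must be identified. My first attempt would be to avoid convergence of integrals altogether. I would use the martingale-problem characterization: show that
\[
M_\psi(t)=\langle\tilde\eta(t),\psi\rangle-\langle\eta_0,\psi\rangle-\int_0^t(\text{drift})\,ds
\]
is a martingale with the correct predictable characteristics (quadratic variation from $\sigma_g$, jump compensator from $G_g$). This requires only uniform integrability, which follows from \eqref{lemma-g-2}, and a.e.-in-time convergence. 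The martingale representation theorem for L\'evy-driven equations would then recover $\tilde W,\tilde{\mathcal N}$ and a weak solution.
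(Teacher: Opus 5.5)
Your overall architecture matches the paper's proof. Both use tightness from Lemma~\ref{thm:main3}, a Skorokhod--Jakubowski representation, and Lemma~\ref{lem:MZ-H-strong} to upgrade Meyer--Zheng convergence to strong convergence in $L^2(0,T;H)$. Both then pass to the limit in the time-integrated weak form of \eqref{lemma-g-3}, use uniqueness from Proposition~\ref{wellpose--limit-1} to get convergence of the whole family, and apply Proposition~\ref{prop:ito-formula} with $\psi=g^{-1}$ to obtain \eqref{LAST}. Carrying $\mathcal N$ itself through the representation is actually better than the paper, which carries only the integral processes $M^{\epsilon_n}_{G,1},M^{\epsilon_n}_{G,2}$ and so never constructs a limiting Poisson measure. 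However, your treatment of the large jumps fails as written. Pathwise,
\[
\int_0^t\int_{\mathbb R^m\setminus Z_1}\langle G_g(\eta^n(s-),z),\psi\rangle_H\,\mathcal N_n(ds,dz)=\sum_{t_i^n\le t}\langle G_g(\eta^n(t_i^n),z_i^n),\psi\rangle_H ,
\]
and a.s.\ convergence of the atoms $(t_i^n,z_i^n)\to(t_i,z_i)$ does not make the summands converge. First, Assumption~\ref{H2} gives no continuity of $G$ in $z$. More seriously, you would need $\eta^n(t_i^n)\to\eta(t_i-)$ in $H$ exactly at the random times where the limit jumps. Meyer--Zheng convergence, and the $L^2(0,T;H)$ convergence of Lemma~\ref{lem:MZ-H-strong}, give no information about values at individual times. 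Here $\eta^n$ is continuous and the jump of $\eta$ builds up over a time of order $\epsilon_n$ after the kick, which is exactly where pointwise control is missing.

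The fix is the decomposition already used in Lemma~\ref{lem:conditional-variation-Yk}. By Assumption~\ref{H1} and Remark~\ref{rem:G-estimates}, write $\mathcal N=\tilde{\mathcal N}+\nu(dz)\,ds$ on $\mathbb R^m\setminus Z_1$. The jump part then splits into two pieces:
\begin{itemize}
\item one compensated integral over $\mathbb R^m$, whose convergence only requires the integrands to converge in $L^2(dt\otimes\nu)$ in probability; this holds because $\eta(s-)=\eta(s)$ for a.e.\ $s$ and $\int\|G_g(\eta_1,z)-G_g(\eta_2,z)\|_H^2\,\nu(dz)\le C\|\eta_1-\eta_2\|_H^2$;
\item the Lebesgue term $\int_0^t\int_{\mathbb R^m\setminus Z_1}\langle G_g(\eta^n(s),z),\psi\rangle_H\,\nu(dz)\,ds$, which converges by the same bound together with the strong $L^2(0,T;H)$ convergence.
\end{itemize}
In the stability lemma for the compensated integrals you still need the usual approximation by integrands that are elementary in time. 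The limit integrand is not adapted to the $n$-th filtration, so you cannot compare $\int h_n\,d\tilde{\mathcal N}_n$ and $\int h\,d\tilde{\mathcal N}$ directly by isometry. You also need to approximate in $L^2(\nu)$ by functions continuous in $z$ and supported away from $0$, since vague convergence of $\mathcal N_n$ only tests such functions. Your martingale-problem fallback would also avoid the problem, since there the jumps enter only through $\nu$-integrals.
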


\begin{proof}
We divide the proof into several steps.

\medskip
\noindent\textbf{Step 1. Compactness and regularity of the Meyer--Zheng limit.}

By Lemma~\ref{thm:main3}, the family of laws \(\{\mathcal L(\eta^\epsilon)\}_{0<\epsilon\le1}\)
is tight in \(D([0,T];H^{-1})\) endowed with the Meyer--Zheng topology. Let \(\epsilon_n\to0\) be arbitrary. Then there exist a subsequence, still
denoted by \(\{\epsilon_n\}_{n\ge1}\), and an \(H^{-1}\)-valued process \(\eta\) such that
\[
\eta^{\epsilon_n}\Longrightarrow \eta\qquad \text{in }D([0,T];H^{-1})\text{ endowed with the Meyer--Zheng topology}.
\]
We apply the Skorokhod--Jakubowski representation theorem to the joint laws
of
\(\Xi_n:=\Big(\eta^{\epsilon_n},\epsilon_n v^{\epsilon_n},W^Q, M_{\sigma}^{\epsilon_n}, M_{G,1}^{\epsilon_n}, M_{G,2}^{\epsilon_n} \Big),\)
where
\[
M_{\sigma}^{\epsilon_n}(t) :=
\int_0^t \sigma_g(\eta^{\epsilon_n}(s))\,dW^Q(s),~~~M_{G,1}^{\epsilon_n}(t):=\int_0^t\int_{Z_1}G_g(\eta^{\epsilon_n}(s-),z)\,\tilde{\mathcal N}(ds,dz),
\]
and
\[
M_{G,2}^{\epsilon_n}(t):=\int_0^t\int_{\mathbb R^m\setminus Z_1}G_g(\eta^{\epsilon_n}(s-),z)\,{\mathcal N}(ds,dz).
\]
The product space is \(\mathcal X:=D([0,T];H^{-1})_{\rm MZ}\times L^2(0,T;H^{-1})\times C([0,T];U_0)\times D([0,T];H^{-1})^3 .\)
Here \(D([0,T];H^{-1})_{\rm MZ}\) denotes \(D([0,T];H^{-1})\) endowed with the Meyer--Zheng topology, and \(U_0\)
is the Hilbert space in which the \(Q\)-Wiener process has continuous paths.
Therefore, there exist a probability space \((\widehat\Omega,\widehat{\mathcal F},\widehat{\mathbb P})\)
and \(\mathcal X\)-valued random variables
\[
\widehat\Xi_n := \Big(\widehat\eta_n,\widehat\kappa_n,\widehat W_n^Q,\widehat M_{\sigma,n},\widehat M_{G,1,n},\widehat M_{G,2,n}\Big)
~~~\text{and}~~~\widehat\Xi:=\Big(\widehat\eta,0,\widehat W^Q,\widehat M_{\sigma},\widehat M_{G,1},\widehat M_{G,2}\Big),
\]
such that \(\mathcal L(\widehat\Xi_n)=\mathcal L(\Xi_n),\) and \(\widehat\Xi_n\to\widehat\Xi,~\widehat{\mathbb P}\text{-a.s. in }\mathcal X .\)
In particular,
\[
\widehat\eta_n\to\widehat\eta\qquad\text{in }D([0,T];H^{-1})_{\rm MZ},\quad\widehat{\mathbb P}\text{-a.s.},
\]
and
\[
\widehat\kappa_n\to0\qquad\text{in }L^2(0,T;H^{-1}),\quad\widehat{\mathbb P}\text{-a.s.}.
\]
Moreover,
\[
\widehat M_{\sigma,n}\to\widehat M_\sigma,\qquad\widehat  M_{G,1,n}\to\widehat M_{G,1},\qquad \widehat M_{G,2,n}\to\widehat M_{G,2},
\]
almost surely in the corresponding path spaces.
Since \(\mathcal L(\widehat\eta_n)=\mathcal L(\eta^{\epsilon_n}),\) the uniform estimate in \eqref{lemma-g-2} gives
\[
\sup_{n\ge1}\widehat{\mathbb E}\sup_{t\in[0,T]}\|\widehat\eta_n(t)\|_H^2+\sup_{n\ge1}
\widehat{\mathbb E}\int_0^T\|\widehat\eta_n(t)\|_{H^1}^2\,dt \le C_T .
\]
Therefore, by Lemma~\ref{lem:MZ-H-strong}, applied on the probability space \((\widehat\Omega,\widehat{\mathcal F},\widehat{\mathbb P}),\)
we obtain
\[
\widehat\eta \in L^2\bigl(\widehat\Omega;L^\infty(0,T;H)\bigr) \cap L^2\bigl(\widehat\Omega\times(0,T);H^1\bigr),
\]
and
\[
\widehat\eta_n\to\widehat\eta \qquad \text{strongly in }L^2(0,T;H), \quad \text{in }\widehat{\mathbb P}\text{-probability}.
\]
Hence,
there exists a further subsequence of \(\{\epsilon_n\}_{n\ge1}\), still
denoted by \(\{\epsilon_n\}_{n\ge1}\), such that
\[
\widehat\eta_n\to\widehat\eta \qquad \text{strongly in }L^2(0,T;H),\quad \widehat{\mathbb P}\text{-a.s.}
\]
This strong convergence will be used below to pass to the limit in the
nonlinear terms. For notational simplicity, we omit the hats in the sequel.

\medskip
\noindent\textbf{Step 2. The time-integrated weak formulation.}

Let \(\phi\in C_0^\infty(\mathcal O),~\rho\in C^1([0,T]),~\rho(T)=0.\) Taking the \(H^{-1}\)-\(H^1\) duality of \eqref{lemma-g-3} with \(\phi\),
\[
\begin{aligned}
\langle Y^\epsilon(t),\phi\rangle ={}&\langle g(u_0)+\epsilon v_0,\phi\rangle-\int_0^t\int_{\mathcal O}B(\eta^\epsilon(s))\nabla\eta^\epsilon(s)\cdot\nabla\phi\,dx\,ds        \\
&+\int_0^t\langle \bar F(\eta^\epsilon(s)),\phi\rangle\,ds+\int_0^t\langle \sigma_g(\eta^\epsilon(s))\,dW^Q(s),\phi\rangle        \\
&+\int_0^t\int_{Z_1}\langle G_g(\eta^\epsilon(s-),z),\phi\rangle\,\tilde{\mathcal N}(ds,dz)                                      \\
&+\int_0^t\int_{\mathbb R^m\setminus Z_1}\langle G_g(\eta^\epsilon(s-),z),\phi\rangle\,\mathcal N(ds,dz).
\end{aligned}
\]
Since \(\langle Y^\epsilon(\cdot),\phi\rangle\) is a real-valued semimartingale and \(\rho\) is deterministic of finite variation with
\(\rho(T)=0\), integration by parts yields
\[
-\int_0^T \rho'(t)\langle Y^\epsilon(t),\phi\rangle\,dt=\rho(0)\langle Y^\epsilon(0),\phi\rangle+\int_0^T \rho(t)\,d\langle Y^\epsilon(t),\phi\rangle .
\]
Therefore,
\[
\begin{aligned}
-\int_0^T \rho'(t)\langle Y^\epsilon(t),\phi\rangle\,dt={}&\rho(0)\langle g(u_0)+\epsilon v_0,\phi\rangle   -\int_0^T
\rho(t)\int_{\mathcal O}B(\eta^\epsilon(t))\nabla\eta^\epsilon(t)\cdot\nabla\phi\,dx\,dt  \\
&+\int_0^T\rho(t)\langle \bar F(\eta^\epsilon(t)),\phi\rangle\,dt                  
 +\int_0^T\rho(t)\langle \sigma_g(\eta^\epsilon(t))\,dW^Q(t),\phi\rangle \\
&+\int_0^T\int_{Z_1}\rho(t)\langle G_g(\eta^\epsilon(t-),z),\phi\rangle\,\tilde{\mathcal N}(dt,dz) \\
&+\int_0^T\int_{\mathbb R^m\setminus Z_1}\rho(t)\langle G_g(\eta^\epsilon(t-),z),\phi\rangle\,\mathcal N(dt,dz).
\end{aligned}
\]
The time test function \(\rho\) is introduced to rewrite the pointwise term \(\langle Y^\epsilon(t),\phi\rangle\) in an integrated form. This allows us to
pass to the limit under the Meyer--Zheng convergence.

Since \(Y^\epsilon(t)=\eta^\epsilon(t)+\epsilon v^\epsilon(t),\)
we have
\[
-\int_0^T\rho'(t)\langle Y^\epsilon(t),\phi\rangle\,dt=-\int_0^T\rho'(t)\langle \eta^\epsilon(t),\phi\rangle\,dt
-\int_0^T\rho'(t)\langle \epsilon v^\epsilon(t),\phi\rangle\,dt .
\]
Moreover, \(\rho(0)\langle g(u_0)+\epsilon v_0,\phi\rangle=\rho(0)\langle g(u_0),\phi\rangle+\rho(0)\epsilon\langle v_0,\phi\rangle .\)
Hence
\[
\begin{aligned}-\int_0^T \rho'(t)\langle \eta^\epsilon(t),\phi\rangle\,dt
={}&\rho(0)\langle g(u_0),\phi\rangle+R_\epsilon(\rho,\phi)  -\int_0^T\rho(t)\int_{\mathcal O} B(\eta^\epsilon(t))\nabla\eta^\epsilon(t)\cdot\nabla\phi\,dx\,dt       \\
&+\int_0^T\rho(t)\langle \bar F(\eta^\epsilon(t)),\phi\rangle\,dt   +\int_0^T\rho(t)\langle \sigma_g(\eta^\epsilon(t))\,dW^Q(t),\phi\rangle   \\
&+\int_0^T\int_{Z_1}\rho(t)\langle G_g(\eta^\epsilon(t-),z),\phi\rangle\,\tilde{\mathcal N}(dt,dz)   \\
&+\int_0^T\int_{\mathbb R^m\setminus Z_1}\rho(t)\langle G_g(\eta^\epsilon(t-),z),\phi\rangle\, \mathcal N(dt,dz),
\end{aligned}
\]
where \(R_\epsilon(\rho,\phi):=\rho(0)\epsilon\langle v_0,\phi\rangle+\int_0^T\rho'(t)\langle \epsilon v^\epsilon(t),\phi\rangle\,dt .\)

Next, by \eqref{a-priori-v},
\[
\begin{aligned}
\mathbb E|R_\epsilon(\rho,\phi)|
&\le|\rho(0)|\epsilon\|v_0\|_H\|\phi\|_H+\|\rho'\|_\infty\mathbb E\int_0^T\|\epsilon v^\epsilon(t)\|_{H^{-1}}\|\phi\|_{H^1}\,dt        \\
&\le C_{\rho,\phi}\epsilon+ C_{\rho,\phi,T}\left(\mathbb E\int_0^T\|\epsilon v^\epsilon(t)\|_{H^{-1}}^2\,dt\right)^{1/2}.
\end{aligned}
\]
Then,
\[
\begin{aligned}
\mathbb E\int_0^T
\|\epsilon v^\epsilon(t)\|_{H^{-1}}^2\,dt  \le C\epsilon^2\int_0^T\mathbb E\|v^\epsilon(t)\|_H^2\,dt     \le C_T\epsilon .
\end{aligned}
\]
Hence, \(\mathbb E|R_\epsilon(\rho,\phi)| \le C_{\rho,\phi,T}\sqrt{\epsilon}.\) Thus \(R_\epsilon(\rho,\phi)\to0 \qquad \text{in }L^1(\Omega).\)

\medskip
\noindent\textbf{Step 3. Identification of the limit equation.}

For each \(n\), the represented pair \((\eta_n,\kappa_n)\) satisfies the
transformed equation \eqref{lemma-g-3}.
Testing against \(\phi\in C_0^\infty(\mathcal O)\), multiplying by
\(\rho\in C^1([0,T])\) with \(\rho(T)=0\), and integrating by parts in time,
we obtain
\[
\begin{aligned}
&\qquad -\int_0^T \rho'(t)\left\langle\eta_n(t)+\kappa_n(t),\phi \right\rangle_{H^{-1},H^1}\,dt\\
& =\rho(0)\left\langle g(u_0)+\kappa_n(0),\phi\right\rangle_H -
\int_0^T\rho(t)\int_{\mathcal O}B(\eta_n(t))\nabla\eta_n(t)\cdot\nabla\phi\,dx\,dt               \\
&{}\qquad+ \int_0^T\rho(t)\left\langle\bar F(\eta_n(t)),\phi
\right\rangle_H\,dt  +\int_0^T\rho(t)\left\langle\sigma_g(\eta_n(t))\,dW_n^Q(t),\phi \right\rangle_H                                      \\
&{}\qquad+ \int_0^T\int_{Z_1}\rho(t)\left\langle G_g(\eta_n(t-),z),\phi \right\rangle_H \tilde{\mathcal N}_n(dt,dz)                          \\
&{}\qquad + \int_0^T\int_{\mathbb R^m\setminus Z_1}\rho(t)\left\langle G_g(\eta_n(t-),z),\phi \right\rangle_H \mathcal N_n(dt,dz).
\end{aligned}
\]
We now pass to the limit. Since \(\kappa_n\to0~\text{in }L^2(0,T;H^{-1}),\) and \(\kappa_n(0)=\epsilon_n v_0\to0\), the inertial correction terms vanish.
Moreover, by Lemma~\ref{lem:MZ-H-strong},
\[
\eta_n\to\eta\qquad\text{strongly in }L^2(0,T;H),\quad \text{in probability},
\]
and
\[
\eta_n\rightharpoonup\eta\qquad\text{weakly in }L^2(\Omega\times(0,T);H^1).
\]
Hence the time-derivative term converges to \(-\int_0^T\rho'(t)\left\langle\eta(t),\phi\right\rangle_{H}\,dt .\)

The convergence of the drift term follows directly from the Lipschitz continuity of \(\bar F\). For the elliptic term, we write
\[
B(\eta_n)\nabla\eta_n-B(\eta)\nabla\eta=B(\eta)(\nabla\eta_n-\nabla\eta)+\bigl(B(\eta_n)-B(\eta)\bigr)\nabla\eta_n .
\]
The first term converges to zero by the weak convergence of \(\eta_n\) in \(L^2(\Omega\times(0,T);H^1)\), while the second one converges
to zero by the strong convergence of \(\eta_n\) in \(L^2(0,T;H)\), the Lipschitz continuity of \(B\), and the uniform \(L^2(0,T;H^1)\)-bound on
\(\eta_n\). Therefore,
\[
\begin{aligned}
 \int_0^T\rho(t)\int_{\mathcal O} B(\eta_n(t))\nabla\eta_n(t)\cdot\nabla\phi\,dx\,dt ~\longrightarrow
\int_0^T\rho(t)\int_{\mathcal O} B(\eta(t))\nabla\eta(t)\cdot\nabla\phi\,dx\,dt .
\end{aligned}
\]

The convergence of the stochastic integral terms follows from the standard stability of stochastic integrals under the joint
Skorokhod--Jakubowski representation, together with the Lipschitz continuity of \(\sigma_g\) and \(G_g\), the convergence of \(\eta_n\), and
Assumption~\ref{H1}. Thus
\[
\begin{aligned}
 \int_0^T\rho(t)\left\langle\sigma_g(\eta_n(t))\,dW_n^Q(t),\phi\right\rangle_H                          
~\longrightarrow\int_0^T\rho(t)\left\langle\sigma_g(\eta(t))\,dW^Q(t),\phi \right\rangle_H ,
\end{aligned}
\]
\[
\begin{aligned}
 \int_0^T\int_{Z_1}\rho(t)\left\langle G_g(\eta_n(t-),z),\phi \right\rangle_H\tilde{\mathcal N}_n(dt,dz)                     
~\longrightarrow \int_0^T\int_{Z_1}\rho(t)\left\langle G_g(\eta(t-),z),\phi \right\rangle_H \tilde{\mathcal N}(dt,dz),
\end{aligned}
\]
and
\[
\begin{aligned}
 \int_0^T\int_{\mathbb R^m\setminus Z_1} \rho(t) \left\langle G_g(\eta_n(t-),z),\phi
\right\rangle_H \mathcal N_n(dt,dz)     ~\longrightarrow\int_0^T\int_{\mathbb R^m\setminus Z_1}
\rho(t)\left\langle G_g(\eta(t-),z),\phi \right\rangle_H \mathcal N(dt,dz).
\end{aligned}
\]
Passing to the limit in the time-integrated weak formulation, we obtain that, for every \(\phi\in C_0^\infty(\mathcal O)\) and every
\(\rho\in C^1([0,T])\) with \(\rho(T)=0\),
\[
\begin{aligned}-\int_0^T\rho'(t)\left\langle\eta(t),\phi\right\rangle_H\,dt
={}&\rho(0)\left\langle g(u_0),\phi \right\rangle_H   -\int_0^T\rho(t)\int_{\mathcal O} B(\eta(t))\nabla\eta(t)\cdot\nabla\phi\,dx\,dt   \\
&+ \int_0^T\rho(t)\left\langle\bar F(\eta(t)),\phi\right\rangle_H\,dt                                 
 +\int_0^T\rho(t)\left\langle\sigma_g(\eta(t))\,dW^Q(t),\phi\right\rangle_H       \\
&+\int_0^T\int_{Z_1}\rho(t)\left\langle G_g(\eta(t-),z),\phi\right\rangle_H\tilde{\mathcal N}(dt,dz)                            \\
&+\int_0^T\int_{\mathbb R^m\setminus Z_1}\rho(t)\left\langle G_g(\eta(t-),z), \phi \right\rangle_H \mathcal N(dt,dz).
\end{aligned}
\]
By integration by parts in time, the preceding time-integrated identity is equivalent to the usual weak formulation. Therefore \(\eta\) satisfies the
weak formulation of the transformed limiting equation~\eqref{eq:limit-eta-new}.

\medskip
\noindent\textbf{Step 4.  Identification of the  limiting equation.}

Set \(u(t):=g^{-1}(\eta(t)).\) By the assumptions on \(\gamma\) and \(g\), we have
\[
(g^{-1})'(r)=\frac{1}{\gamma(g^{-1}(r))}, \qquad (g^{-1})''(r) = -\frac{\gamma'(g^{-1}(r))}{\gamma(g^{-1}(r))^3}.
\]
Moreover, since \(g^{-1}\) is Lipschitz and \(\eta\in L^2(\Omega\times(0,T);H^1),\) we have
\(u=g^{-1}(\eta)\in L^2(\Omega\times(0,T);H^1).\) In particular, for every \(\phi\in C_0^\infty(\mathcal O)\),
\(\frac{\phi}{\gamma(u)}\in H_0^1(\mathcal O).\) Since \(\eta=g(u)\), we have \(\nabla\eta=\gamma(u)\nabla u.\)
By the definition of \(B\),   \(B(\eta)=\frac{1}{\gamma(g^{-1}(\eta))}=\frac{1}{\gamma(u)},\)
it follows that \(B(\eta)\nabla\eta=\nabla u.\) Therefore, applying Proposition~\ref{prop:ito-formula} to the weak solution \(\eta\), with
\[
\psi=g^{-1}, \qquad K(t)=B(\eta(t))\nabla\eta(t), \qquad F(t)=\bar F(\eta(t)),
\]
\[
J(t)=\sigma_g(\eta(t)),
\qquad
G(t,z)=G_g(\eta(t-),z),
\]
we obtain, for every \(\phi\in C_0^\infty(\mathcal O)\),
\[
\begin{aligned}
\langle u(t),\phi\rangle_H ={}&\langle u_0,\phi\rangle_H     -\int_0^t \int_{\mathcal O} \nabla u(s,\xi)\cdot \nabla\left(
\frac{\phi(\xi)}{\gamma(u(s,\xi))} \right) \,d\xi\,ds   + \int_0^t\left\langle\frac{F(u(s))}{\gamma(u(s))},\phi\right\rangle_H ds          \\
&+\int_0^t\left\langle\frac{\sigma(u(s))}{\gamma(u(s))}\,dW^Q(s),\phi\right\rangle_H                                  
 -\frac12\int_0^t\left\langle\frac{\gamma'(u(s))}{\gamma(u(s))^3}\sigma(u(s))\sigma^\ast(u(s)),\phi\right\rangle_H ds             \\
&+\int_0^t\int_{Z_1}\left\langle\frac{G(u(s-),z)}{\gamma(u(s-))},\phi\right\rangle_H\tilde{\mathcal N}(ds,dz)                               \\
&+\int_0^t\int_{\mathbb R^m\setminus Z_1}\left\langle\frac{G(u(s-),z)}{\gamma(u(s-))},\phi\right\rangle_H\mathcal N(ds,dz)                                       \\
&+\left\langle\sum_{0<s\le t}\left[u(s)-u(s-)-\frac{1}{\gamma(u(s-))}\bigl(\eta(s)-\eta(s-)\bigr)\right],\phi \right\rangle_H .
\end{aligned}
\]
Here \(\eta(s)-\eta(s-)=g(u(s))-g(u(s-))\). Hence, pointwise in the spatial variable, \(\eta(s)-\eta(s-)=\int_{u(s-)}^{u(s)}\gamma(r)\,dr .\)
Therefore the jump correction term can be written as
\[
\sum_{0<s\le t}\left[u(s)-u(s-)-\frac{1}{\gamma(u(s-))}\int_{u(s-)}^{u(s)}\gamma(r)\,dr\right].
\]
Thus \(u=g^{-1}(\eta)\) satisfies exactly the weak formulation \eqref{def-g-1}, and hence solves \eqref{LAST}.

It remains to show the convergence of \(u^{\epsilon_n}\). Since
\[
\eta^{\epsilon_n}\to\eta
\qquad
\text{strongly in }L^2(0,T;H),
\quad
\text{in probability},
\]
and \(g^{-1}\) is Lipschitz,  hence
\[
u^{\epsilon_n}=g^{-1}(\eta^{\epsilon_n})
\to
g^{-1}(\eta)
=
u
\qquad
\text{strongly in }L^2(0,T;H),
\quad
\text{in probability}.
\]
Since \(H\hookrightarrow H^{-1}\) continuously, this also implies
\[
u^{\epsilon_n}\to u
\qquad
\text{strongly in }L^1(0,T;H^{-1}),
\quad
\text{in probability}.
\]
Hence \(u^{\epsilon_n}\to u\) in Lebesgue measure on \([0,T]\) with values
in \(H^{-1}\), in probability. By the characterization of the
Meyer--Zheng topology through convergence in Lebesgue measure, we conclude
that
\[
u^{\epsilon_n}\to u
\qquad
\text{in }D([0,T];H^{-1})_{\rm MZ},
\quad
\text{in probability}.
\]
Consequently,
\[
u^{\epsilon_n}
\Longrightarrow
u
\qquad
\text{in }D([0,T];H^{-1})_{\rm MZ}.
\]

\medskip
\noindent\textbf{Step 5. Uniqueness of the limit and convergence of the whole family.}

Let \(\epsilon_n\to0\) be arbitrary. By Step~1, the sequence
\(\{\mathcal L(\eta^{\epsilon_n})\}_{n\ge1}\) is tight in
\(D([0,T];H^{-1})_{\rm MZ}\). Hence every subsequence contains a further
subsequence converging weakly to some limit law. By Steps~2--3, every
corresponding limit process is a weak solution to the transformed limiting
equation~\eqref{eq:limit-eta-new} with initial condition
\(\eta(0)=g(u_0)\).

By Proposition~\ref{wellpose--limit-1},
\eqref{eq:limit-eta-new} admits a unique weak solution \(\eta\), and its
law is uniquely determined. Consequently, every limit point identified
above has the same law as \(\eta\). Since the family is tight and all of
its subsequential limit laws coincide, the whole family converges, and
therefore
\[
\eta^\epsilon\Longrightarrow\eta \qquad\text{in }D([0,T];H^{-1})_{\rm MZ}.
\]

By Step~4, along every convergent subsequence of
\(\{\eta^\epsilon\}_{\epsilon>0}\), the corresponding sequence
\(u^\epsilon=g^{-1}(\eta^\epsilon)\) converges in law to
\(u=g^{-1}(\eta)\) in \(D([0,T];H^{-1})_{\rm MZ}\). Hence
\[
u^\epsilon\Longrightarrow u=g^{-1}(\eta) \qquad\text{in }D([0,T];H^{-1})_{\rm MZ}.
\]
Finally, Step~4 shows that \(u\) satisfies the weak formulation of
\eqref{LAST}, while Proposition~\ref{wellpose--limit-1} gives its
uniqueness. The proof is complete.

\end{proof}

\subsection{Marcus-to-It\^o conversion under state-dependent damping}
\label{subsec:marcus-drift}

\medskip
In this subsection, we provide a detailed interpretation of the noise-induced drift
term appearing in the limiting equation~\eqref{LAST} in the absence of  L\'evy noise.
We show that this term arises naturally when rewriting a Marcus-type jump integral
in It\^o form in the presence of state-dependent damping.

\medskip
\noindent\textbf{A motivating second-order model.}
To illustrate the origin of the L\'evy-induced drift correction, we consider a simplified
one-dimensional Langevin-type equation with small mass $\epsilon>0$ and no deterministic
forcing:
\begin{equation}\label{eq:toy-second-order}
\left\{
\begin{aligned}
du_t^\epsilon &= v_t^\epsilon\,dt,\\
\epsilon\, dv_t^\epsilon
&= -\gamma(u_t^\epsilon)\,v_t^\epsilon\,dt
   + \int_{\mathbb{R}^m} G(u_{t-}^\epsilon,z)\,\diamond N(dt,dz),
\end{aligned}
\right.
\end{equation}
where $\gamma(x)>0$ is a smooth state-dependent damping coefficient and $\diamond$
denotes the Marcus integral. Such a formulation is natural from a physical viewpoint:
each jump generates an instantaneous perturbation, followed by a dissipative response governed by the local damping

\medskip
\noindent\textbf{Marcus flow map.}
We briefly recall the pathwise definition of the Marcus integral and explain how the
explicit jump flow arises in the present setting.

Consider the canonical Marcus equation
\begin{equation}\label{eq:marcus-recall}
du_t = \int_{\mathbb{R}^m} \frac{1}{\gamma(u_{t-})}\,   G(u_{t-},z)\,\diamond N(dt,dz),
\end{equation}
where $\diamond$ denotes the Marcus integral. In contrast to the It\^o formulation,
the Marcus integral  is defined so that each jump {transforms} the trajectory through a
deterministic flow generated by an ordinary differential equation(ODE).
More precisely, if $N$ has an atom $(t,z)$ (i.e.\ a jump of size $z$ occurs at time $t$),
then the value immediately after the jump satisfies \(u_t=\Phi(u_{t-},z),\)
where the Marcus flow map $\Phi(x,z)$ is defined as the endpoint of the ODE
\begin{equation}\label{eq:marcus-ode}
\frac{d}{d\lambda}\Psi(\lambda)
= \frac{1}{\gamma(\Psi(\lambda))}\,G(x,z),
\qquad
\Psi(0)=x,
\qquad
\Phi(x,z):=\Psi(1).
\end{equation}
This ODE represents the cumulative effect of the jump $z$ along a state-dependent vector field.

To solve~\eqref{eq:marcus-ode}, recall that \(g(x)=\int_0^x \gamma(r)\,dr,\) and  $g'(x)=\gamma(x)$, by Assumption~\ref{H3}, $g^{-1}$ is globally
Lipschitz. Applying the Marcus chain rule yields
\[
\frac{d}{d\lambda} g(\Psi(\lambda))
= g'(\Psi(\lambda))\,\frac{d}{d\lambda}\Psi(\lambda)
= \gamma(\Psi(\lambda))\cdot \frac{1}{\gamma(\Psi(\lambda))}\,G(x,z)
= G(x,z).
\]
Integrating over $\lambda\in[0,1]$ gives $g(\Phi(x,z))-g(x)=G(x,z)$, and therefore
\begin{equation}\label{eq:marcus-flow-explicit}
\Phi(x,z)=g^{-1}\big(g(x)+G(x,z)\big).
\end{equation}
This explicit flow map will be used below to rewrite the Marcus jump term in It\^o form
and to identify the resulting correction with the jump term in~\eqref{LAST}.

\medskip
\noindent\textbf{Marcus-to-It\^o reformulation and drift correction.}
We now explain how the jump correction term in the limiting equation~\eqref{LAST}
arises when rewriting the Marcus formulation in It\^o form.

\smallskip
Recall that the Marcus formulation is characterized by the canonical jump map:
if $N$ has an atom $(s,z_s)$, then \(u(s)=\Phi(u(s-),z_s),~\Phi(x,z)=g^{-1}\big(g(x)+G(x,z)\big).\)
In particular,
\begin{equation}\label{eq:g-jump}
g(u(s))-g(u(s-))=G(u(s-),z_s).
\end{equation}

\smallskip
\noindent\emph{Pathwise It\^o decomposition (consistent with \eqref{LAST}).}
Recall that $\tilde N$ denotes the compensated Poisson random measure introduced in
Section~\ref{sec:pre}. At a jump time $s$ with jump size $z_s$, the compensated integral
\(\int \gamma(u(s-))^{-1}G(u(s-),z)\,\tilde N(ds,dz)\) produces the ``linearized'' jump $\gamma(u(s-))^{-1}G(u(s-),z_s)$, whereas the Marcus
trajectory jumps by $\Phi(u(s-),z_s)-u(s-)$. Consequently, the Marcus integral satisfies
the pathwise identity
\begin{align}
\int_0^t\!\!\int_{\mathbb R^m}\frac{1}{\gamma(u(s-))}\,G(u(s-),z)\,\diamond N(ds,dz)
&=\int_0^t\!\!\int_{\mathbb R^m}\frac{1}{\gamma(u(s-))}\,G(u(s-),z)\,\tilde N(ds,dz)+ \sum_{0<s\le t}\mathfrak C_s,
\label{eq:marcus-to-ito-pathwise}
\end{align}
where the jump correction at time $s$ is
\begin{equation}\label{eq:jump-correction-def}
\mathfrak C_s:= \big(u(s)-u(s-)\big)-\frac{1}{\gamma(u(s-))}\,G\big(u(s-),z_s\big).
\end{equation}
Using \eqref{eq:g-jump},
\(
G(u(s-),z_s)=g(u(s))-g(u(s-))=\int_{u(s-)}^{u(s)}\gamma(r)\,dr,
\)
so that \eqref{eq:jump-correction-def} becomes  the correction term in~\eqref{LAST}
\begin{equation}\label{eq:jump-correction-last}
\mathfrak C_s=u(s)-u(s-)-\frac{1}{\gamma(u(s-))}\int_{u(s-)}^{u(s)}\gamma(r)\,dr.
\end{equation}
This shows that the last sum in~\eqref{LAST} is precisely the Marcus-to-It\^o correction
ensuring that the limiting dynamics preserve the Marcus jump flow.
Moreover, since
$\sum_{0<s\le t}f(s,z_s)=\int_0^t\!\!\int f(s,z)\,N(ds,dz)$, decomposing
$N=\tilde N+\nu(dz)\,ds$ identifies a compensator contribution, which we refer to as the
{noise-induced drift} associated with the nonlinear Marcus flow under state-dependent damping.

\medskip
\noindent\textbf{Identification with the limiting equation.}
We now verify that the jump correction term in~\eqref{LAST} coincides \emph{exactly}
with the Marcus--to--It\^o correction induced by the canonical flow map $\Phi$.

\smallskip
Let $u$ be a c\`adl\`ag solution to~\eqref{LAST}. Fix a jump time $s$ of the Poisson measure
and denote by $z_s$ the corresponding jump size. Since all drift terms and the Gaussian
stochastic integral are continuous, the only discontinuities in~\eqref{LAST} come from the
compensated Poisson integral and the jump-sum term. Taking the jump at time $s$ in~\eqref{LAST}
therefore gives
\begin{align*}
\Delta u(s)
&=\Delta\!\left(\int_0^\cdot\!\!\int_{\mathbb R^m}
\frac{1}{\gamma(u(r-))}G(u(r-),z)\,\tilde N(dr,dz)\right)\!(s)\\
&\quad \quad+ \Delta\!\left(\sum_{0<r\le \cdot}
\Big[u(r)-u(r-)-\frac{1}{\gamma(u(r-))}\!\int_{u(r-)}^{u(r)}\gamma(\xi)\,d\xi\Big]\right)\!(s) \\
&=\frac{1}{\gamma(u(s-))}\,G(u(s-),z_s)
+\Big[u(s)-u(s-)-\frac{1}{\gamma(u(s-))}\int_{u(s-)}^{u(s)}\gamma(\xi)\,d\xi\Big].
\end{align*}
Subtracting $\Delta u(s)=u(s)-u(s-)$ from both sides, we obtain
\begin{equation}\label{eq:jump-identity}
\int_{u(s-)}^{u(s)}\gamma(r)\,dr = G(u(s-),z_s).
\end{equation}
Recalling $g(x):=\int_0^x\gamma(r)\,dr$, we have $g(u(s))-g(u(s-))=G(u(s-),z_s)$, hence
\[
u(s)=g^{-1}\!\big(g(u(s-))+G(u(s-),z_s)\big)=:\Phi(u(s-),z_s).
\]
Therefore, the jump correction term in~\eqref{LAST} can be rewritten as
\begin{equation}\label{eq:LAST-jump-correction-Phi}
\sum_{0<s\le t}
\Bigg[
u(s)-u(s-)-\frac{1}{\gamma(u(s-))}\int_{u(s-)}^{u(s)}\gamma(r)\,dr
\Bigg]
=
\sum_{0<s\le t}
\Bigg[
\Phi(u(s-),z_s)-u(s-)-\frac{1}{\gamma(u(s-))}G(u(s-),z_s)
\Bigg].
\end{equation}
Equivalently, since $\int_0^t\!\!\int f(s,z)\,N(ds,dz)=\sum_{0<s\le t} f(s,z_s)$,
\eqref{eq:LAST-jump-correction-Phi} is exactly
\begin{equation}\label{eq:LAST-jump-correction-N}
\int_0^t\!\!\int_{\mathbb R^m}
\Big[
\Phi(u(s-),z)-u(s-)-\tfrac{1}{\gamma(u(s-))}G(u(s-),z)
\Big]\,N(ds,dz).
\end{equation}
Combining \eqref{eq:LAST-jump-correction-N} with the compensated jump integral in~\eqref{LAST}
shows that the total jump contribution in~\eqref{LAST} is   the It\^o rewriting of the
Marcus integral
\begin{align*}
&\int_0^t\!\!\int_{\mathbb R^m}\frac{1}{\gamma(u(s-))}G(u(s-),z)\,\diamond N(ds,dz)\\
&=
\int_0^t\!\!\int_{\mathbb R^m}\frac{1}{\gamma(u(s-))}G(u(s-),z)\,\tilde N(ds,dz)\\
&\quad +\int_0^t\!\!\int_{\mathbb R^m}
\Big[\Phi(u(s-),z)-u(s-)-\tfrac{1}{\gamma(u(s-))}G(u(s-),z)\Big]\,N(ds,dz).
\end{align*}
Moreover, decomposing $N=\tilde N+\nu(dz)\,ds$ in the last term yields a compensator term
\(\int_0^t\!\!\int_{\mathbb R^m}\Big[\Phi(u(s-),z)-u(s-)-\tfrac{1}{\gamma(u(s-))}G(u(s-),z)\Big]\,\nu(dz)\,ds, \)
which is  interpreted as the integrated {noise-induced drift} generated by the nonlinear Marcus flow under state-dependent damping.

\begin{remark}[Equivalent Marcus form of the limiting equation]\label{rem:LAST-marcus}
Equation~\eqref{LAST} can be equivalently rewritten in Marcus form as
\begin{equation}\label{eq:LAST-Marcus}
\begin{aligned}
u(t) &= u_0
+ \int_0^t \frac{1}{\gamma(u(s))}\,\Delta u(s)\,ds
+ \int_0^t \frac{1}{\gamma(u(s))}\,F(u(s))\,ds
+ \int_0^t \frac{1}{\gamma(u(s))}\,\sigma(u(s))\,dW^Q(s) \\
&\quad
-\int_0^t \frac{\gamma'(u(s))}{2\,\gamma(u(s))^3}\,
\sigma(u(s))\sigma^\ast(u(s))\,ds
+ \int_0^t\!\!\int_{\mathbb R^m}
\frac{1}{\gamma(u(s-))}\,G(u(s-),z)\,\diamond N(ds,dz).
\end{aligned}
\end{equation}
Here the Marcus integral is understood through the canonical flow map
$\Phi(x,z)=g^{-1}(g(x)+G(x,z))$ with $g(x)=\int_0^x\gamma(r)\,dr$.
The equivalence is pathwise: at each jump time $s$ with jump size $z_s$, the solution satisfies
$u(s)=\Phi(u(s-),z_s)$, and the jump-sum term in~\eqref{LAST} is exactly the It\^o rewriting
correction associated with the Marcus flow.
\end{remark}


\appendix

\section{ A Generalized It\^o  Formula for SPDEs with Jumps}

In this appendix, we state a generalized It\^o  formula for weak solutions to a broad class of stochastic partial differential equations (SPDEs) with Lévy noise, posed on bounded domains with Dirichlet boundary conditions.

Let $\mathcal{O} \subset \mathbb{R}^d$ be a bounded open domain with smooth boundary. Consider an SPDE of the form
\begin{align}\label{ap-1}
du(t) &= F(t)\,dt + \operatorname{div}K(t)\,dt + J(t)\,dW(t) \nonumber\\
&\quad + \int_{Z_1} G(t^-,z)\,\tilde{N}(dt,dz) + \int_{\mathbb{R}^m\setminus Z_1} G(t^-,z)\,N(dt,dz),
\end{align}
where\\
(i) $W(t) = \sum_{k\ge1} \beta_k(t)\,e_k$ is a cylindrical Wiener process on a separable Hilbert space $U$, with $(\beta_k)_{k\ge1}$ a sequence of mutually independent standard real-valued Wiener processes, and $(e_k)_{k\ge1}$ an orthonormal basis in $U$.\\
(ii)$N$ is a Poisson random measure on $\mathbb{R}_+\times\mathbb{R}^m$ with intensity measure $\nu$, and $\tilde{N} := N - \nu$ is its compensated version.\\
(iii) The coefficients $F$, $K$, $J$, and $G$ are progressively measurable processes taking values in suitable Hilbert spaces (to be specified below).

We are interested in deriving an It\^o-type formula for functionals $\Phi(u(t))$, where $u$ is a weak solution to~\eqref{ap-1} taking values in a Sobolev space over $\mathcal{O}$ and $\Phi:\mathbb{R}\to\mathbb{R}$ is a sufficiently smooth function.
We adapt and extend the result of Proposition~A.1 in \cite{AMJ} to obatin the Lévy noise setting.

 \begin{proposition}[Generalized It\^o formula for weak SPDE solutions]\label{prop:ito-formula}
Let $\phi \in C^\infty_0(\mathcal{O})$ and $\psi \in C^2(\mathbb{R})$ with bounded second derivative.
Assume
\begin{itemize}
    \item $F$ and each $K_j$ $(j=1,\dots,d)$ are progressively measurable processes in $L^2(\Omega; L^2(0,T; H))$.
    \item $J$ is a progressively measurable process in $L^2(\Omega; L^\infty(0,T; L_2(U;H)))$, where $L_2(U;H)$ denotes the Hilbert–Schmidt operators from $U$ to $H$.
    \item for each $i \in \mathbb{N}$, define $J_i(t) := J(t)e_i$ for a fixed orthonormal basis $(e_i)_{i\geq1}$ of $U$.
    \item $G$ is progressively measurable in $L^2(\Omega \times [0,T] \times \mathbb{R}^m; H)$, and satisfies
    \[
    \int_{\mathbb{R}^m} \|G(t,\cdot,z)\|_H^2\,\nu(dz) < \infty \quad \text{a.s. for all } t\in[0,T].
    \]
\end{itemize}

Assume that the process $u \in L^2(\Omega; \mathcal{D}([0,T]; H^{-1})) \cap L^2(\Omega; L^2(0,T; H^1))$ solves equation~\eqref{ap-1} in the sense of $H^{-1}$. Then, almost surely, for all $t \in [0,T]$, the following identity holds
\begin{align}\label{ap-2}
\langle \psi(u(t)), \phi \rangle_H
&= \langle \psi(u_0), \phi \rangle_H
+ \int_0^t \langle \psi'(u(s))\,F(s), \phi \rangle_H\,ds
- \int_0^t \langle \psi'(u(s))\,K(s), \nabla \phi \rangle_H\,ds \nonumber\\
&\quad - \int_0^t \langle \psi''(u(s))\,\nabla u(s) \cdot K(s), \phi \rangle_H\,ds
+ \frac{1}{2} \int_0^t \left\langle \psi''(u(s)) \sum_{i=1}^\infty J_i^2(s), \phi \right\rangle_H\,ds \nonumber\\
&\quad + \int_0^t \left\langle \psi'(u(s)) \sum_{i=1}^\infty J_i(s)\,dW^i(s), \phi \right\rangle_H \nonumber\\
&\quad + \int_0^t \left\langle \psi'(u(s^-)) \int_{Z_1} G(s^-,z)\,\tilde{N}(ds,dz), \phi \right\rangle_H \nonumber\\
&\quad + \int_0^t \left\langle \psi'(u(s^-)) \int_{\mathbb{R}^m \setminus Z_1} G(s^-,z)\,N(ds,dz), \phi \right\rangle_H \nonumber\\
&\quad + \left\langle \sum_{0 \le s \le t} \left[ \psi(u(s)) - \psi(u(s^-)) - \psi'(u(s^-))\,(u(s) - u(s^-)) \right], \phi \right\rangle_H.
\end{align}
\end{proposition}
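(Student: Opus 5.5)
The plan is to follow the regularization strategy behind Proposition~A.1 in \cite{AMJ}. We smooth in space, apply the finite-dimensional It\^o formula for semimartingales with jumps pointwise in $x$, and then remove the regularization. Fix $\phi\in C_0^\infty(\mathcal O)$ and let $\rho_\delta$ be a standard mollifier; choose $\delta$ smaller than $\operatorname{dist}(\operatorname{supp}\phi,\partial\mathcal O)$, so that $u_\delta(t,x):=(u(t)*\rho_\delta)(x)$ is well defined for $x$ near $\operatorname{supp}\phi$. Testing \eqref{ap-1} against $\rho_\delta(x-\cdot)$, which lies in $C_0^\infty(\mathcal O)$, shows that for each such $x$ the real process $u_\delta(t,x)$ is a semimartingale:
\[
u_\delta(t,x)=u_\delta(0,x)+\int_0^t\bigl(F_\delta+\operatorname{div}K_\delta\bigr)(s,x)\,ds+\sum_i\int_0^t J_{i,\delta}(s,x)\,dW^i(s)+\int_0^t\!\!\int G_\delta(s-,x,z)\,(\tilde N\text{ or }N)(ds,dz).
\]
Its jumps are $\Delta u_\delta(s,x)=(\Delta u(s))_\delta(x)$. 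The classical It\^o formula for $\psi\in C^2$ then gives, for each $x$, an expression for $\psi(u_\delta(t,x))$ with drift $\psi'(u_\delta)(F_\delta+\operatorname{div}K_\delta)$, the correction $\tfrac12\psi''(u_\delta)\sum_i J_{i,\delta}^2$, the martingale terms, and the jump sum $\sum_s[\psi(u_\delta(s))-\psi(u_\delta(s-))-\psi'(u_\delta(s-))\Delta u_\delta(s)]$.

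Next, multiply by $\phi(x)$ and integrate over $\mathcal O$, using a stochastic Fubini theorem for both the Wiener and the Poisson integrals. In the divergence term, integrate by parts: $\int\psi'(u_\delta)\operatorname{div}K_\delta\,\phi=-\int\psi'(u_\delta)K_\delta\cdot\nabla\phi-\int\psi''(u_\delta)\nabla u_\delta\cdot K_\delta\,\phi$. This produces exactly the structure of \eqref{ap-2} at the level $\delta$.

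Finally, let $\delta\to0$. Since $u\in L^2(0,T;H^1)$, we have $u_\delta\to u$ and $\nabla u_\delta\to\nabla u$ in $L^2(0,T;L^2_{loc})$ a.s. Also $F_\delta,K_\delta,J_\delta,G_\delta$ converge in the respective $L^2$ spaces. Because $\psi'$ is Lipschitz and $\psi''$ is bounded and continuous, dominated convergence handles the drift terms, including the product $\psi''(u_\delta)\nabla u_\delta\cdot K_\delta$ (strong times strong convergence, with $\psi''(u_\delta)\to\psi''(u)$ a.e.). For the martingale terms, use the It\^o isometry, the BDG inequality, and Kunita's inequality: for instance, $\mathbb E\int\!\!\int\|\psi'(u_\delta)G_\delta-\psi'(u)G\|_H^2\,\nu\,ds\to0$, after localizing by stopping times if $\psi'$ is unbounded. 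The large-jump $N$-integral is a finite sum on $[0,T]$ and converges pathwise. The left side converges for a.e.\ $t$, and then for all $t$ by right-continuity.

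The main obstacle is the jump remainder. The summand is bounded by $\tfrac12\|\psi''\|_\infty|\Delta u_\delta(s,x)|^2$, and
\[
\sum_s\int_{\mathcal O}|\Delta u(s)|^2|\phi|\,dx\le\|\phi\|_\infty\int_0^t\!\!\int\|G(s-,z)\|_H^2\,N(ds,dz),
\]
which is a.s.\ finite by the integrability assumption on $G$. Pointwise convergence of each summand together with this summable majorant gives convergence by dominated convergence over $(s,x)$. This step needs $\Delta u(s)\in H$, which holds because the jumps equal $G(s-,z_s)\in H$. A second delicate point is the existence of a c\`adl\`ag version of $u_\delta(\cdot,x)$ jointly measurable in $x$; I would take this from the $H^{-1}$-c\`adl\`ag regularity of $u$, since convolution with $\rho_\delta(x-\cdot)$ is continuous from $H^{-1}$ into $C(\operatorname{supp}\phi)$.
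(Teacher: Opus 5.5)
Your proposal follows the paper's own argument: mollify on a compact neighbourhood of $\operatorname{supp}\phi$, apply the one-dimensional It\^o formula with jumps pointwise in $x$, integrate by parts in space, and let $\delta\to0$ using BDG and Kunita's inequality. Your treatment of the large jumps as a pathwise finite sum, and of the jump remainder through the per-jump bound $\tfrac12\|\psi''\|_\infty\|\phi\|_\infty\|\Delta u(s)\|_H^2$ (summable in $s$ and uniform in $\delta$), is tighter than the paper's handling of $I_8$ and $I_9$.

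Two points still need to be added; the paper's proof glosses over both. First, your step ``for all $t$ by right-continuity'', the limits $u_\delta(s-)\to u(s-)$ at the jump times, and your localization in $\|u\|_H$ all require $u(t),u(t-)\in H$ with $u$ c\`adl\`ag in $H$. This does not follow from $H^{-1}$-c\`adl\`ag paths alone, but for $u_0\in H$ it does follow from the Gy\"ongy--Krylov It\^o formula for $\|u\|_H^2$ in the triple $H^1_0\subset H\subset H^{-1}$, after compensating the large-jump integral. Second, the martingale integrands must be estimated through the pairing with $\phi$, i.e.\ by $\|\phi\|_\infty\|\psi'(u_\delta)G_\delta-\psi'(u)G\|_{L^1}$, because $\|\psi'(u)G\|_H$ need not be finite.
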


\begin{proof}
It suffices to prove the result for any test function $\phi \in C_0^\infty(\mathcal{O})$.
Let $\mathcal{K} \Subset \mathcal{O}$ be a compact set such that $\operatorname{supp}(\phi) \subset \mathcal{K}$, and define
\(\delta_0 := \operatorname{dist}(\mathcal{K}, \mathcal{O}^c) > 0.\)
Let $\rho \in C_c^\infty(\mathbb{R}^d)$ be a standard mollifier supported in the unit ball, with $\int_{\mathbb{R}^d} \rho = 1$, and set
\[
\rho_\delta(x) := \delta^{-d} \rho\left(\frac{x}{\delta}\right), \qquad f^\delta := f * \rho_\delta, \quad \text{for } \delta < \delta_0.
\]
Then for any $f \in H$, \(\|f^\delta\|_{L^2(\mathcal{K})} \le \|f\|_H, ~ \|f^\delta - f\|_{L^2(\mathcal{K})} \to 0.\)
We apply the mollifier to equation~\eqref{ap-1}, and obtain for each $x \in \mathcal{K}$
\begin{align}\label{ap-3}
du^\delta(t,x) &= F^\delta(t,x)\,dt + \operatorname{div} K^\delta(t,x)\,dt + \sigma^\delta(t,x)\,dW(t) \\
&\quad + \int_{Z_1} G^\delta(t^-,x,z)\,\tilde{N}(dt,dz)
+ \int_{\mathbb{R}^m \setminus Z_1} G^\delta(t^-,x,z)\,N(dt,dz). \nonumber
\end{align}
Now fix a function $\psi \in C^\infty_0(\mathcal{O})$, and apply the classical one-dimensional Itô formula to $\psi(x)\,\phi(u^\delta(t,x))$, then integrate over $x \in \mathcal{K}$. We obtain:
\begin{align}\label{ap-4}
\langle \phi(u^\delta(t)), \psi \rangle_{L^2(\mathcal{K})}
&= \langle \phi(u^\delta_0), \psi \rangle + \int_0^t \langle \phi'(u^\delta)\,F^\delta, \psi \rangle\,ds
- \int_0^t \langle \phi'(u^\delta)\,K^\delta, \nabla \psi \rangle\,ds \nonumber\\
&\quad - \int_0^t \langle \phi''(u^\delta)\,\nabla u^\delta \cdot K^\delta, \psi \rangle\,ds
+ \frac12 \int_0^t \langle \phi''(u^\delta) \sum_i J_i^2, \psi \rangle\,ds \nonumber\\
&\quad + \int_0^t \langle \phi'(u^\delta) \sum_i J_i\,dW^i, \psi \rangle
+ \int_0^t \left\langle \phi'(u^\delta) \int_{Z_1} G^\delta\,\tilde{N}, \psi \right\rangle \nonumber\\
&\quad + \int_0^t \left\langle \phi'(u^\delta) \int_{\mathbb{R}^m\setminus Z_1} G^\delta\,N, \psi \right\rangle \nonumber\\
&\quad + \left\langle \sum_{0\le s\le t} \left[ \phi(u^\delta(s)) - \phi(u^\delta(s^-)) - \phi'(u^\delta(s^-))(u^\delta(s) - u^\delta(s^-)) \right], \psi \right\rangle \nonumber\\
&=: \sum_{i=1}^9 I_i.
\end{align}
The terms $I_1$ through $I_6$ converge as $\delta \to 0$ using the same arguments as in~\cite[Proposition A.1]{AMJ}. Below, we focus on the convergence of the jump terms $I_7$–$I_9$.\\
\smallskip
\noindent\textbf{Estimate for \boldmath$I_7$.}
Applying Kunita's first inequality~\cite[Theorem 4.4.23]{Applebaum}, we obtain
\begin{align}\label{eq:I7-Kunita}
&\mathbb{E} \sup_{t \in [0,T]} \left| \int_0^t \left\langle \int_{Z_1} \left[ \phi'(u^\delta(s)) G^\delta(s{-},z) - \phi'(u(s)) G(s{-},z) \right] \tilde N(ds,dz),\ \psi \right\rangle_{L^2(\mathcal{K})} \right|\nonumber\\
&\le C\,\mathbb{E} \left( \int_0^T \int_{Z_1} \left| \left\langle \phi'(u^\delta(s)) G^\delta(s,z) - \phi'(u(s)) G(s,z),\ \psi \right\rangle \right|^2 \nu(dz) ds \right)^{1/2}.
\end{align}
We  estimate the integrand using the inequality \(|A B - CD| \le |A - C||B| + |C||B - D|,\) which yields
\begin{align}
&\left| \left\langle \phi'(u^\delta(s)) G^\delta(s,z) - \phi'(u(s)) G(s,z),\ \psi \right\rangle_{L^2(\mathcal{K})} \right| \nonumber\\
&\le \left\| \phi'(u^\delta(s)) - \phi'(u(s)) \right\|_{L^2(\mathcal{K})} \cdot \left\| G^\delta(s,z) \right\|_{L^2(\mathcal{K})} \cdot \|\psi\|_{L^2(\mathcal{K})}\nonumber\\
&+ \left\| \phi'(u(s)) \right\|_{L^2(\mathcal{K})} \cdot \left\| G^\delta(s,z) - G(s,z) \right\|_{L^2(\mathcal{K})} \cdot \|\psi\|_{L^2(\mathcal{K})}. \notag
\end{align}
Substituting into~\eqref{eq:I7-Kunita} and using $\|\psi\|_{L^2(\mathcal{K})}$ bounded, we arrive at
\begin{align}\label{eq:I7-final}
\mathbb{E} \sup_{t \in [0,T]} |I_7(t)|
&\le C\,\mathbb{E} \left( \int_0^T \left\| \phi'(u^\delta(s)) - \phi'(u(s)) \right\|_{L^2(\mathcal{K})}^2
\int_{Z_1} \|G^\delta(s,z)\|_{L^2(\mathcal{K})}^2\, \nu(dz)\, ds \right)^{1/2} \notag\\
&\quad+ C\,\mathbb{E} \left( \int_0^T \| \phi'(u(s)) \|_{L^2(\mathcal{K})}^2
\int_{Z_1} \|G^\delta(s,z) - G(s,z)\|_{L^2(\mathcal{K})}^2\, \nu(dz)\, ds \right)^{1/2}.
\end{align}

\smallskip
\noindent Since $\phi'$ is Lipschitz, we have
\[
\left\| \phi'(u^\delta(s)) - \phi'(u(s)) \right\|_{L^2(\mathcal{K})} \longrightarrow 0
\quad \text{a.e. in } (s,\omega),
\]
and by assumption,
\[
\left\| G^\delta(s,z) - G(s,z) \right\|_{L^2(\mathcal{K})} \longrightarrow 0
\quad \text{a.e. in } (s,\omega,z).
\]
Moreover, we have the uniform bounds
\begin{align}\label{eq:I7-uniform}
&\mathbb{E} \int_0^T \left\| \phi'(u^\delta(s)) - \phi'(u(s)) \right\|_{L^2(\mathcal{K})}^2
\int_{Z_1} \|G^\delta(s,z)\|_{L^2(\mathcal{K})}^2 \nu(dz)\, ds \notag\\
&\quad+ \mathbb{E} \int_0^T \| \phi'(u(s)) \|_{L^2(\mathcal{K})}^2
\int_{Z_1} \|G^\delta(s,z) - G(s,z)\|_{L^2(\mathcal{K})}^2 \nu(dz)\, ds
\le C.
\end{align}
Hence, by the dominated convergence theorem, both terms on the right-hand side of~\eqref{eq:I7-final} vanish as $\delta \to 0$, so that $I_7 \to 0~   \text{in} ~L^1\big(\Omega; D([0,T])\big)$.

\smallskip\noindent
\textbf{Step 2: Convergence of $I_8$.}
This term is analogous to $I_7$ and treated identically using Kunita’s inequality for integrals against the non-compensated Poisson measure $N$.

\smallskip\noindent
\textbf{Step 3: Convergence of $I_9$.}
Note that
\begin{align}\label{ap-7}
&\left\langle \sum_{0\le s \le t} \left[ \phi(u^\delta(s)) - \phi(u^\delta(s^-)) - \phi'(u^\delta(s^-))(u^\delta(s)-u^\delta(s^-)) \right], \psi \right\rangle \nonumber\\
&\quad - \left\langle \sum_{0\le s \le t} \left[ \phi(u(s)) - \phi(u(s^-)) - \phi'(u(s^-))(u(s)-u(s^-)) \right], \psi \right\rangle  
 = \langle A_1 + A_2, \psi \rangle,
\end{align}
where
\begin{align*}
A_1 &= \sum_{0\le s\le t} \left[ \phi(u^\delta(s)) - \phi(u(s)) + \phi(u(s^-)) - \phi(u^\delta(s^-)) \right],\\
A_2 &= \sum_{0\le s\le t} \left[ \phi'(u^\delta(s^-))(u^\delta(s)-u^\delta(s^-)) - \phi'(u(s^-))(u(s)-u(s^-)) \right].
\end{align*}
Using again the Lipschitz continuity of $\phi$ and $\phi'$, together with the $L^2$ convergence of $u^\delta \to u$, we obtain that $A_1, A_2 \to 0$ in $L^2([0,T]\times\Omega)$, and hence also in probability. Therefore, \(I_9^\delta \to I_9, ~ \text{as} \to 0.\)

This completes the proof.

\end{proof}

{\bf  Data availability}   Data sharing is not applicable to this article as no datasets were generated or analyzed during the current study.

{\bf Conflict of interest}  The authors state that there is no conflict of interest.

\end{document}